\pdfoutput=1
\documentclass{svproc}

\usepackage{url}
\usepackage{local-abbrv}
\usepackage{local-jmsdelim}
\usepackage{jms-sections}
\usepackage{preamble}

\usepackage{etoolbox}
\apptocmd{\sloppy}{\hbadness 10000\relax}{}{}

\begin{document}
\mainmatter

\title{Tensorial Structure of the Lifting Doctrine in Constructive Domain Theory}

\titlerunning{Tensorial Structure of the Lifting Doctrine}
\author{Jonathan Sterling}

\authorrunning{Jonathan Sterling}

\institute{%
  Computer Laboratory\\
  University of Cambridge\\
  \email{js2878@cl.cam.ac.uk}%
}

\maketitle

\begin{abstract}

  We present a survey of the two-dimensional and tensorial structure of the \emph{lifting doctrine} in constructive domain theory, \ie in the theory of directed-complete partial orders (dcpos) over an arbitrary elementary topos. We establish the universal property of lifting of dcpos as the Sierpi\'nski cone, from which we deduce (1) that lifting forms a Kock--Z\"oberlein doctrine, (2) that lifting algebras, pointed dcpos, and inductive partial orders form canonically equivalent locally posetal 2-categories, and (3) that the category of lifting algebras is cocomplete, with connected colimits created by the forgetful functor to dcpos. Finally we deduce the symmetric monoidal closure of the Eilenberg--Moore resolution of the lifting 2-monad by means of smash products; these are shown to classify both bilinear maps and strict maps, which we prove to coincide in the constructive setting. We provide several concrete computations of the smash product as dcpo coequalisers and lifting algebra coequalisers, and compare these with the more abstract results of Seal.
  Although all these results are well-known classically, the existing proofs do not apply in a constructive setting; indeed, the classical analysis of the Eilenberg--Moore category of the lifting monad relies on the fact that all lifting algebras are free, a condition that is not known to hold constructively.
   \keywords{domain theory, category theory, constructive mathematics, monoidal closed categories, algebras, 2-monads}
\end{abstract}

\begin{xsect}{Introduction}

  Axiomatic approaches to domain theory take place in a \emph{monoidal adjunction} between a category of ``predomains'' and a category of ``domains''. The simplest notion of predomain is given by \emph{directed complete partial orders} (dcpos) and Scott--continuous functions between them; a corresponding notion of domain arises by considering algebras for an appropriate commutative monad on the preorder-enriched category of predomains. Most commonly, domains are considered to be algebras for a \emph{lifting monad} $\LL$ on predomains that introduces partiality.

  From this abstract definition, we may \emph{not} conclude that lifting is defined on points by taking the coproduct with $\One$, as Kock has pointed out~\cite{kock:1995}, unless the ambient topos is boolean; in general, we must use the partial map classifier of the ambient topos. This difference from classical domain theory is the source of many subtleties in the constructive setting.

  If the Eilenberg--Moore resolution $L\dashv U : \LiftAlg\to\Dcpo$ of the lifting monad is going to be monoidal, then $\LiftAlg$ would need to have a monoidal product $\otimes$; then the left adjoint being strong monoidal would mean that we have coherent isomorphisms $L\prn{A\times B}\cong LA\otimes LB$, \etc. Therefore we always know how to define the tensor product on \emph{free} domains, but it does not immediately follow from this that we may extend the tensor to operate on non-free domains. In classical mathematics, this difficulty is side-stepped by virtue of the fact that \emph{there are no non-free domains!}

  Indeed, classically, every $\LL$-algebra is a \emph{free} $\LL$-algebra --- if $X$ has a bottom element $\bot$, it can be seen that $X$ is the lift of the dcpo $X\setminus\brc{\bot}$ using the law of the excluded middle. Unfortunately, this simple description of $\LL$-algebras does not carry over to the constructive mathematics of an elementary topos, as Kock has discussed at length~\cite{kock:1995}. We can illustrate the problem by means of the following Brouwerian counterexample (\cref{thm:lem-vs-free-on-non-bottom}) which follows by way of \cref{lem:F-conservative} below --- anticipating a precise definition of lifting monad.

  \begin{proposition}\label{lem:F-conservative}
    The lifting functor $L\colon \Dcpo\to\LiftAlg$ is conservative.
  \end{proposition}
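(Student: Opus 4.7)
The plan is to reduce to showing that the lifting endofunctor $\LL\colon\Dcpo\to\Dcpo$ reflects isomorphisms. Indeed, if $Lf$ is an iso in $\LiftAlg$, then applying the forgetful functor $U\colon\LiftAlg\to\Dcpo$ gives that $\LL f = ULf$ is an iso in $\Dcpo$; so if $\LL$ reflects isomorphisms, then $f$ is an iso in $\Dcpo$, as required.

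My strategy for showing that $\LL$ reflects isomorphisms is to prove that for every $f\colon X\to Y$ the unit naturality square with sides $f$, $\LL f$, $\eta_X$, $\eta_Y$ is in fact a \emph{pullback} in $\Dcpo$. Granted this, stability of isomorphisms under pullback forces $f$---the pullback of $\LL f$ along $\eta_Y$---to be an isomorphism whenever $\LL f$ is, completing the argument.

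The main obstacle will be the pullback claim itself. Classically it is immediate because $\LL X\cong X+\One$ and $\LL f$ acts as $f+\id_{\One}$, so the ``defined'' summand of $\LL X$ is simply the preimage of the ``defined'' summand of $\LL Y$ along $\LL f$. Constructively, one argues instead from the universal property of $\LL$ as a partial map classifier: the unit $\eta_X\colon X\hookrightarrow\LL X$ embeds $X$ as the subdcpo of ``total'' elements, classified by a definedness map into the Sierpi\'nski dcpo, and the naturality of this classification in $X$ forces $\LL f$ to both preserve and reflect definedness. Consequently, pulling back the totality mono $\eta_Y$ along $\LL f$ returns precisely $\eta_X$, as required. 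Since the proposition is stated before the formal development of $\LL$, one may prefer to phrase the pullback verification directly in terms of the universal property of $\eta_X$ as the generic total morphism into $\LL X$, avoiding any external appeal to partial map classifiers.
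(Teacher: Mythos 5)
Your proposal is correct and follows essentially the same route as the paper: the paper's proof likewise observes that the unit naturality square is a pullback and concludes by stability of isomorphisms under pullback. The only difference is that you additionally sketch a justification of the pullback claim (via the classification of $\eta$ by the definedness map into $\Sigma$), which the paper asserts without proof.
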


  \begin{proof}
    For any morphism of dcpos $f\colon A\to B$, the following is a pullback square:
    \begin{equation*}
      \DiagramSquare{
        nw = A,
        ne = B,
        sw = LA,
        se = LB,
        north = f,
        south = Lf,
        west = \eta_A,
        east = \eta_B,
        west/style = embedding,
        east/style = embedding,
        nw/style = pullback,
        width = 2.25cm,
        height = 1.5cm,
      }
    \end{equation*}

    Any pullback of an isomorphism is an isomorphism; therefore, if $Lf$ is an isomorphism, so is $f$.\qed
  \end{proof}

  \begin{theorem}\label{thm:lem-vs-free-on-non-bottom}
    The law of excluded middle holds if and only if every free $\LL$-algebra is free on its non-bottom elements.
  \end{theorem}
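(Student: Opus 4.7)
The plan is to prove the biconditional by treating the two directions separately, with the backward direction doing the heavy lifting via \cref{lem:F-conservative}. For the forward direction, I would invoke the classical description already noted in the introduction: assuming LEM, the lifting monad is isomorphic to the coproduct with $\One$, so the underlying dcpo of $LA$ is $A \sqcup \brc{\bot}$ and its sub-dcpo of non-bottom elements is exactly the image of $\eta_A$. The canonical comparison map $L\prn{\prn{LA}_{\neq\bot}} \to LA$ is then (up to isomorphism) the identity, so $LA$ is free on its non-bottom elements.

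For the backward direction, I would specialise the hypothesis to $A = \One$, so that $L\One = \Sigma$ is the Sierpi\'nski dcpo and the sub-dcpo of non-bottom elements is $N = \brc{p \in \Omega : \neg\neg p}$ with inclusion $\iota\colon N \hookrightarrow \Sigma$. The hypothesis supplies an isomorphism $\widetilde{\iota}\colon LN \xrightarrow{\sim} L\One$ in $\LiftAlg$ that transposes to $\iota$ under $L \dashv U$. A short calculation with the triangle identities shows that $\widetilde{\iota}^{-1}$ coincides with $L\prn{\top_N}$, where $\top_N\colon \One \to N$ picks out the top proposition: indeed, since $U\widetilde{\iota}\prn{\eta_N\prn{\top}} = \iota\prn{\top} = \top = \eta_\One\prn{\ast}$, the dcpo-transpose of $\widetilde{\iota}^{-1}$ sends the generator $\ast \in \One$ to $\eta_N\prn{\top_N\prn{\ast}} \in ULN$, which is also the transpose of $L\prn{\top_N}$ by naturality of $\eta$. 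Since $\widetilde{\iota}^{-1}$ is an isomorphism, \cref{lem:F-conservative} forces $\top_N\colon \One \to N$ to be an isomorphism of dcpos, so that $N$ coincides with the sub-object $\One \xrightarrow{\top} \Sigma$. Unpacking, this yields $\neg\neg p \Leftrightarrow p$ for every proposition $p$, i.e.\ double-negation elimination, which is equivalent to the law of excluded middle.

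I expect the identification $\widetilde{\iota}^{-1} = L\prn{\top_N}$ to be the most delicate step, as it requires careful bookkeeping with the unit and counit of $L \dashv U$; once it is in hand, the conservativity of $L$ does the rest of the work in the backward direction, while the forward direction is routine modulo the classical reduction of $L$ to coproduct with $\One$.
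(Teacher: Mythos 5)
Your proof is correct and takes essentially the same route as the paper: specialise the hypothesis to the free algebra $L\One\cong\Omega$, use the conservativity of $L$ (\cref{lem:F-conservative}) to conclude that the subdcpo of non-bottom elements of $\Omega$ is a singleton, and extract the law of excluded middle from the resulting double-negation elimination. Your identification of the inverse of the canonical comparison as $L\prn{\top_N}$, so that conservativity applies to a map manifestly in the image of $L$, is if anything slightly more careful than the paper's direct appeal to $L{!}_{\Omega\setminus\brc{\bot}}$, but the substance of the argument is identical.
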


  \begin{proof}
    If the law of excluded middle holds, then obviously every $\LL$-algebra is free on its non-bottom elements. In the converse direction, we consider whether the $\LL$-algebra $\Omega$ given by the collection of all propositions with their implication order, where suprema are computed by existential quantification, is free on its non-bottom elements; it is easy to see that $\Omega$ is the free $\LL$-algebra on the terminal dcpo. Therefore the map ${L\prn{!\Sub{\Omega\setminus\brc{\bot}}}}\colon L\prn{\Omega\setminus\brc{\bot}}\to L\One$ is an isomorphism; by assumption, we may conclude from \cref{lem:F-conservative} that $\Omega\setminus\brc{\bot}$ is a singleton --- or, equivalently, that a proposition $\phi$ is true if and only if $\phi\not=\bot$.
    Now let $\psi$ be any proposition; to show that $\psi\lor\lnot\psi$, by the above we may assume $\lnot\prn{\psi\lor\lnot\psi}$ to prove a contradiction; our assumption is equivalent to $\lnot\psi \land \lnot\lnot\psi$, which is clearly contradictory.\qed
  \end{proof}

  Although \cref{thm:lem-vs-free-on-non-bottom} shows that it need not be the case that all $\LL$-algebras are free on their non-bottom elements, one might conjecture that every $\LL$-algebra is nonetheless free on a \emph{different} subdcpo. The most natural candidate for a subdcpo $X^+\subseteq UX$ such that $LX^+ \cong X$ would be the one spanned by \emph{positive} elements in the sense of De Jong and \'Escard\'o~\cite{dejong-escardo:2021} as adapted from Johnstone~\cite{johnstone:1984}: an element $x$ of an $\LL$-algebra $X$ is called \demph{positive} when any semidirected subset of $X$ whose suprema lies above $x$ is directed. Noting that the subposet of an $\LL$-algebra $X$ spanned by positive elements is always a dcpo, we are naturally led to the following open question:

  \begin{openquestion}\label{open-question:free-on-pos}
    Does there exist an elementary topos containing an $\LL$-algebra that is not free on its subdcpo of positive elements?
  \end{openquestion}

  Indeed, Kock has shown that an $\LL$-algebra is free if and only if it is free on its positive elements~\cite{kock:1995}; combining this with \cref{lem:F-conservative}, we see that the \emph{only} possible generators for a free $\LL$-algebra dcpo are its positive elements (which coincide with the non-bottom elements in the classical setting). Therefore, an answer to \cref{open-question:free-on-pos} would determine altogether whether and how all $\LL$-algebras can be free in constructive mathematics; I conjecture that the answer to \cref{open-question:free-on-pos} is ``Yes'', and so there may exist examples of non-free domains. Until and unless this expectation is contravened by mathematical evidence, the constructive version of the smash product must be defined on (potentially) non-free domains.

  \paragraph{Lifting closed structure \`a la Kock and Seal}
  \NewDocumentCommand\VCat{}{\mathcal{V}}

  It is a well-known result of category theory due to Kock~\cite{kock:1971} that the category of algebras $\VCat^{\mathbb{T}}$ for a commutative monad $\mathbb{T}\equiv\prn{T,\eta,\mu}$ on a symmetric monoidal closed category $\VCat$ with equalizers inherits \emph{closed} structure from $\VCat$, and (moreover) that the Eilenberg--Moore resolution of $\mathbb{T}$ consists of closed functors, \ie the left and right adjoints laxly preserve the internal hom. What is missing is the \emph{monoidal} structure on $\LL$-algebras that should extend the Eilenberg--Moore resolution $L\dashv U\colon \LiftAlg\to\Dcpo$ to a (symmetric) \emph{monoidal} closed adjunction. Luckily, a further result of Seal~\cite{seal:2013} provides sufficient conditions for a category of algebras to admit a tensor product by means of a construction dual to that of the internal hom and, moreover, for this tensor product to represent bilinear maps. That these conditions in fact hold constructively for dcpos and their lifting monad has not been verified until now, although they are not especially difficult.

  \paragraph{Summary of results}

  The contribution of the present paper is to provide a constructive analysis of the lifting doctrine for dcpos, embodied in the following results:
  \begin{enumerate}
    \item \textbf{Universal properties of $\Omega$:} the top truth value $\top\colon\One\hookrightarrow\Omega$ is the universal Scott--open immersion (\cref{lem:universal-open-immersion}), and the inequality $\bot\sqsubseteq\top \colon \One\hookrightarrow\Omega$ satisfies the 2-categorical universal property of the Sierpi\'nski space (\cref{lem:subobject-classifier-is-sigma}).\footnote{Although these results are known, they play a important role in what follows.}
    \item \textbf{Universal properties for lifting:} lifting enjoys both left- and right-handed universal properties in the 2-category of dcpos as a Sierpi\'nski cone (\cref{lem:lifting-is-scone}) and as a partial product (\cref{lem:lifting-is-partial-product}) respectively. The former implies our most important technical lemma, that $\bot\colon \One\hookrightarrow LA$ and $\eta_{A}\colon A\hookrightarrow LA$ are jointly (lax) epimorphic (\cref{cor:lax-epi}), enabling a restricted form of classical reasoning when establishing inequalities of the form $f\sqsubseteq g\colon LA\to B$.
    \item \textbf{Lifting is a Kock--Z\"oberlein doctrine:} for any lifting algebra $X$, the structure map $\alpha_X\colon LX\to X$ is left adjoint to the unit $\eta_X \colon X\hookrightarrow LX$, and so lifting algebra structures are unique (\cref{lem:lax-idempotent}).
    \item \textbf{Monadicity of pointed dcpos and ipos:} lifting algebras, pointed dcpos, and inductive partial orders are all canonically equivalent as locally posetal 2-categories (\cref{cor:lift-alg-characterisation}), and so pointed dcpos and ipos are monadic over dcpos (\cref{cor:monadicity}).
    \item \textbf{Cocompleteness of lifting algebras:} the category of lifting algebras is closed under all colimits, with connected colimits created by the forgetful functor $U\colon \LiftAlg\to\Dcpo$ (\cref{cor:U-creates-connected-colimits,cor:lift-algebras-cocomplete}).
    \item \textbf{Tensorial structure of lifting:} bilinear maps concide with bistrict maps (\cref{lem:bistrict-iff-bilinear}) and are representable by the \emph{smash product} (\cref{lem:uni-bistrict}) for which we provide several computations as coequalisers in both $\Dcpo$ and $\LiftAlg$ (\cref{cor:smash-coequalisers}). Smash products extend to a full symmetric monoidal structure on $\LiftAlg$, so that the adjunction $L\dashv U\colon\LiftAlg\to\Dcpo$ is symmetric monoidal (\cref{cor:lifting-adjunction-symmetric-monoidal}). Moreover, smash products are left adjoint to strict function spaces (\cref{lem:smash-lolli}) which make $L\dashv U$ into a \emph{closed} adjunction.
  \end{enumerate}

  \paragraph{Why does constructive domain theory matter?}

  The generality of our results is important, as modern approaches to programming semantics routinely involve computing recursive functions in non-boolean toposes. Our interest in constructive domains is not rooted in the philosophy of intuitionism, but instead in the practical necessity to study computation in \emph{variable and continuous sets}~\cite{lawvere:1975} as well as \emph{effective sets}~\cite{hyland:1982,bauer:2006}, whose dynamics generalize those of constant sets.

  In fact, it happens that the constructive theory of dcpos has not received much attention in the literature outside the groundbreaking work of Kock~\cite{kock:1995}, Townsend~\cite{townsend:1996}, and De Jong and \'Escard\'o~\cite{dejong-escardo:2021,dejong:2021,dejong:2023:thesis}. Therefore many results that appear to be ``obvious'' have not in fact been established, and the constructive domains behave differently enough from the classical ones that it would not be safe to take these results for granted. This paper is one further step in the direction of a thorough and base-independent account of dcpos that is applicable in an arbitrary topos.

  \paragraph{Topos-theoretic forerunners}
  Many of the results of the present paper have not previously been stated for dcpos, but their proofs nonetheless follow a well-trod template from locale theory and topos theory. For example, the two-dimensional analysis of lifting in terms of Sierpi\'nski cones and partial products was carried out for bounded toposes over a fixed elementary topos by Johnstone~\cite{johnstone:1992}\footnote{See also \S{}B4.4 of \emph{The Elephant}~\cite{johnstone:2002}.} and applied to the \emph{topical domain theory} of algebraic dpcos over a given topos by Vickers~\cite{vickers:1999}.
  On the other hand, not all dcpos come from a locale~\cite{johnstone:1981}: therefore, although our proofs are the ones that one naturally expects from experience with locales and toposes, the results must still be stated and proved for dcpos. With this said, we acknowledge that the two-dimensional analysis of dcpo lifting exposed here is well-known in the domain theoretic community and it is included in the present paper only for the sake of systematising existing knowledge.

  We are unsure if our main results concerning the cocompleteness of lifting algebras and their symmetric monoidal structure carry over to locales and toposes, but answering such a question would be a natural next step.

\end{xsect}

\begin{xsect}{Preliminaries in constructive category theory}
  \NewDocumentCommand\CCat{}{\mathcal{C}}
  \NewDocumentCommand\DCat{}{\mathcal{D}}
  \NewDocumentCommand\ICat{}{\mathcal{I}}
  \NewDocumentCommand\TT{}{\mathbb{T}}

  \begin{xsect}{Creation of colimits}
    In order to prove the cocompleteness of lifting algebras (\cref{sec:alg-cocomplete}), we will need some completely standard results about creation of colimits. Unfortunately, the categorical literature is saturated with subtly different and mutually incompatible definitions of what it means to create (co)limits. For example, Mac~Lane~\cite[Ch.~V]{maclane:1998} defines creation of (co)limits in a \emph{strict} way that involves equality of objects: as a result, it is not even the case that every equivalence of categories creates colimits. The non-invariance of Mac Lane's original notion is an actual impediment to practical use, as one naturally wishes to replace given categories by equivalent ones freely. For this reason, we adopt the following definition from Riehl~\cite[\S3.3]{riehl:2017}.

    \begin{definition}\label{def:created-colimit}
      Let $U\colon \DCat\to \CCat$ be a functor and let $\mathbb{D}$ be a class of diagrams in $\DCat$. The functor $U$ is said to \demph{create colimits of diagrams in $\mathbb{D}$} when for any diagram $D\colon \ICat\to\DCat$ in $\mathbb{D}$, if $UD\colon\ICat\to\CCat$ has a colimit then $D\colon \ICat\to\DCat$ has a colimit and $U$ both preserves and reflects colimits of $D$, \ie a cocone under $D$ is colimiting if and only if its image under $U$ is.
    \end{definition}

    The following are standard results of category theory, but we state and prove them carefully to avoid any doubt as to their constructivity or their compatibility with \cref{def:created-colimit}. Readers confident in the theory of created colimits would not miss much by skipping the remainder of this section.

    \begin{lemma}\label{lem:colimit:alg}
      Let $\CCat$ be a category and let $\TT \equiv \prn{T,\eta,\mu}$ be a monad on $\CCat$, and let $\mathbb{D}$ be a class of diagrams in $\CCat$. Suppose that the endofunctor $T$ preserves colimits of diagrams in $\mathbb{D}$. Let $X\colon \ICat\to\CCat^\TT$ be a diagram of $\TT$-algebras such that $UX\colon \ICat\to\CCat$ lies in $\mathbb{D}$ and has a universal cocone $c\colon UX\to\brc{C}$. We may extend $C$ to an essentially unique $\TT$-algebra structure $\bar{C}$ over $C$ in a canonical way such that $c\colon UX\to\brc{C}$ lifts to a cocone of algebras $\bar{c}\colon X\to \brc{\bar{C}}$ over $c$.
    \end{lemma}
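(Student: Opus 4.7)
The plan is to transport the algebra structures along $c$ by exploiting the universal property of the colimit together with the hypothesis that $T$ preserves it. First I would observe that the structure maps $\alpha_i\colon TUX_i\to UX_i$ assemble into a natural transformation $\alpha\colon TUX\to UX$ of diagrams $\ICat\to\CCat$, since the transition morphisms of $X$ are $\TT$-algebra homomorphisms by assumption. Composing with $c$ yields a cocone $c\circ\alpha\colon TUX\to\brc{C}$; because $T$ preserves the colimit of $UX$, the cocone $Tc\colon TUX\to\brc{TC}$ is itself universal, so there exists a unique arrow $\bar\alpha\colon TC\to C$ characterised by $\bar\alpha\circ Tc_i = c_i\circ\alpha_i$ for every $i\in\ICat$. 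These equations are exactly the statement that each $c_i$ lifts to a $\TT$-algebra homomorphism $\bar c_i\colon\prn{UX_i,\alpha_i}\to\prn{C,\bar\alpha}$; they also handle essential uniqueness, since any other structure map $\bar\alpha'$ making the $c_i$ into algebra homomorphisms would satisfy the same characterisation and so must coincide with $\bar\alpha$.

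Next I would verify the $\TT$-algebra laws for $\bar\alpha$, in each case reducing to an equation between parallel arrows out of a universal cocone. For the unit law $\bar\alpha\circ\eta_C=\mathrm{id}_C$, I would precompose both sides with an arbitrary $c_i$ and simplify using naturality of $\eta$ and the unit law of $\alpha_i$; both composites come out to $c_i$, so universality of $c$ forces equality. For associativity, I would appeal to the hypothesis that $T$ preserves colimits of diagrams in $\mathbb{D}$ \emph{iteratively}: the composite endofunctor $T^2$ likewise preserves them, so $T^2c\colon T^2UX\to\brc{T^2C}$ is universal. Probing $\bar\alpha\circ T\bar\alpha$ and $\bar\alpha\circ\mu_C$ against this cocone, two applications of the defining equation of $\bar\alpha$, naturality of $\mu$, and associativity of $\alpha_i$ both reduce the composites to $c_i\circ\alpha_i\circ\mu_{UX_i}$, whence equality follows.

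The main (and essentially only) obstacle is noticing that the associativity verification needs access to a universal cocone \emph{at level two}, which is why the hypothesis is phrased as preservation of an entire class of diagrams rather than preservation of a single colimit — it then automatically yields preservation by $T^n$ for all $n$. No equalisers or other limits in $\CCat$ are needed, in contrast with the dual construction for limits of algebras.
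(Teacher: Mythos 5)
Your proof is correct and follows essentially the same route as the paper's: define the structure map by factoring the cocone $c\circ\alpha$ (built from the levelwise structure maps) through the universal cocone $Tc$, verify the unit law by probing against $c$, and verify associativity by probing against $TTc$; the paper merely carries out the identical computation in string-diagrammatic form using the counit of $F\dashv U$. One small caveat: your claim that preservation of colimits of diagrams in $\mathbb{D}$ ``automatically'' yields preservation by $T^{2}$ tacitly assumes that $\mathbb{D}$ is closed under postcomposition with $T$ (so that $TUX\in\mathbb{D}$ and hence $TTc$ is universal) --- but the paper makes exactly the same tacit assumption at the corresponding step, and it holds in the intended application where $\mathbb{D}$ is the class of connected diagrams.
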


    We will argue using the \emph{string diagrammatic} language of the 2-category of categories, the advantage being that it clarifies reasoning that involves naturality. We refer to Hinze and Marsden~\cite{hinze-marsden:2023} for a thorough introduction to string diagrams in a 2-category; note, however, that we differ from \opcit by having diagrams flow from the downward and to the right in keeping with the usual diagrammatic order of composition. In what follows, we let $F\dashv U$ be the Eilenberg--Moore resolution of $\TT$.

    \begin{proof}
      By assumption, the following diagram is a universal cocone.
      \begin{equation}\label[diagram]{diag:colimit:alg:a}

              \end{equation*}
      \end{enumerate}

      Hence we may define a $\TT$-algebra structure $\bar{C}$ with $U\bar{C}=C$, setting the structure map $\alpha_{\bar{C}} \colon TC\to C$ to be $\beta$. That $c$ lifts to a cocone of algebras is \emph{exactly} the defining condition of $\alpha_{\bar{C}}=\beta$ via the universal property of $Tc\colon TUX\to \brc{TC}$; uniqueness of the algebra structure follows from the same universal property.\qed
    \end{proof}

    \begin{lemma}\label{lem:colimit:reflect}
      Let $\TT\equiv\prn{T,\eta,\mu}$ be a monad on a category $\CCat$, and let $\mathbb{D}$ be a class of diagrams in $\CCat$. If $T$ preserves colimits of diagrams in $\mathbb{D}$, then $U\colon \CCat^\TT\to\CCat$ reflects colimits of diagrams in $\mathbb{D}$.
    \end{lemma}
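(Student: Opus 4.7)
The plan is to combine \cref{lem:colimit:alg} with the faithfulness of the forgetful functor $U\colon \CCat^\TT \to \CCat$. Suppose $X\colon \ICat \to \CCat^\TT$ is a diagram such that $UX$ lies in $\mathbb{D}$, and let $\bar{c}\colon X\to\brc{\bar{C}}$ be a cocone of algebras whose image $c := U\bar{c}\colon UX \to \brc{UC}$ is colimiting in $\CCat$. I must show that $\bar{c}$ is colimiting in $\CCat^\TT$.

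First I invoke \cref{lem:colimit:alg}: because $T$ preserves the colimit $c$, that lemma yields an essentially unique algebra structure $\beta\colon TUC \to UC$ extending $c$ to a cocone of algebras, and by the uniqueness clause $\beta$ must coincide with the given structure map $\alpha_{\bar{C}}$ of $\bar{C}$.

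Next, given any competing cocone of algebras $\bar{d}\colon X \to \brc{\bar{D}}$, I set $d := U\bar{d}$ and use the universal property of $c$ in $\CCat$ to obtain the unique morphism $h\colon UC \to UD$ such that $h \circ c_i = d_i$ for each $i$. The substantive step is to check that $h$ is a morphism of algebras, \ie that $h \circ \alpha_{\bar{C}} = \alpha_{\bar{D}} \circ Th$. Since $T$ preserves the colimit $c$, the cocone $Tc\colon TUX \to \brc{TUC}$ is itself universal, so it suffices to establish this equation after precomposition with each $Tc_i$; then the defining property $\alpha_{\bar{C}} \circ Tc_i = c_i \circ \alpha_{X_i}$ from \cref{lem:colimit:alg}, combined with the fact that each $\bar{d}_i$ is an algebra map, yields the required identity by a short chase. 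Uniqueness of $h$ as a morphism of algebras is then immediate from its uniqueness in $\CCat$ together with the faithfulness of $U$.

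The argument is essentially routine once \cref{lem:colimit:alg} is in hand and presents no real obstacle; the one point worth flagging is that the preservation hypothesis on $T$ is consumed twice --- once to produce the algebra structure on the colimit, and a second time to test the algebra-morphism equation for $h$ against the universal cocone $Tc$.
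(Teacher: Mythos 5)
Your argument is correct and follows essentially the same route as the paper's: obtain $h$ from the universal property of $c$ in $\CCat$, verify the algebra-homomorphism equation by testing against the cocone $Tc$ (universal by the preservation hypothesis on $T$), and conclude by faithfulness of $U$. The only divergence is your detour through \cref{lem:colimit:alg} to identify the induced algebra structure with $\alpha_{\bar{C}}$, which is harmless but unnecessary, since the identity $\alpha_{\bar{C}}\circ Tc_i = c_i\circ\alpha_{X_i}$ is already part of the datum that $\bar{c}$ is a cocone of algebras.
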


    \begin{proof}
      Let $X\colon \ICat\to\CCat^\TT$ be a diagram equipped with a cocone $y\colon X\to \brc{Y}$ whose image $Uy\colon UX\to \brc{UY}$ in $\CCat$ is universal.
      \begin{equation*}

      \end{equation*}

      We have shown that $h\colon UY\to UZ$ satisfies the homomorphism property, and therefore lies in the image of some (unique) $\bar{h}\colon Y\to Z$, so we are done.\qed
    \end{proof}

    \begin{lemma}\label{lem:T-preserves-to-U-creates}
      Let $\TT\equiv\prn{T,\eta,\mu}$ be a monad on a category $\CCat$ and let $\mathbb{D}$ be a class of diagrams in $\CCat$. If $T$ preserves colimits of diagrams in $\mathbb{D}$, then $U\colon \CCat^\TT\to\CCat$ creates colimits of diagrams in $\mathbb{D}$.
    \end{lemma}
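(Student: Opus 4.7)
The plan is to combine \cref{lem:colimit:alg} and \cref{lem:colimit:reflect} essentially verbatim, since together they already supply all three components of \cref{def:created-colimit}. First I would fix a diagram $D\colon \ICat\to\CCat^\TT$ whose underlying diagram $UD$ lies in $\mathbb{D}$ and admits a universal cocone $c\colon UD\to\brc{C}$ in $\CCat$. Applying \cref{lem:colimit:alg} yields an essentially unique lift of $C$ to a $\TT$-algebra $\bar{C}$ together with a cocone of algebras $\bar{c}\colon D\to\brc{\bar{C}}$ satisfying $U\bar{c}=c$; this establishes the existence clause.

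Next I would invoke \cref{lem:colimit:reflect} to conclude that $\bar{c}$ is universal in $\CCat^\TT$, since its underlying cocone $c$ is universal in $\CCat$ and $T$ preserves colimits of diagrams in $\mathbb{D}$ by hypothesis. This simultaneously witnesses that $U$ preserves the colimit exhibited by $\bar{c}$ (by construction $U\bar{c}=c$) and reflects colimits of $D$ in the sense made precise by \cref{lem:colimit:reflect}.

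To close the ``if and only if'' clause of \cref{def:created-colimit}, I would observe that once $D$ is known to admit a colimit $\bar{c}$ whose image under $U$ is universal, any other colimiting cocone under $D$ in $\CCat^\TT$ is related to $\bar{c}$ by a unique isomorphism of $\TT$-algebras, which $U$ carries to an isomorphism in $\CCat$; hence the universality of $U\bar{c}$ transports to the image of every colimiting cocone, giving preservation in full generality. I do not anticipate any real obstacle here, as the substantive categorical work has already been packaged into \cref{lem:colimit:alg,lem:colimit:reflect}, and this lemma amounts to assembling their conclusions against \cref{def:created-colimit}.
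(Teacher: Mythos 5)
Your proposal is correct and follows essentially the same route as the paper: apply \cref{lem:colimit:alg} to lift the colimiting cocone to an algebra cocone, then \cref{lem:colimit:reflect} to see it is universal in $\CCat^\TT$. Your third paragraph, transporting universality along the unique isomorphism between colimiting cocones to obtain the preservation clause of \cref{def:created-colimit}, is a point the paper's proof leaves implicit, and it is handled correctly.
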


    \begin{proof}
      Let $X\colon \ICat\to \CCat^\TT$ be a diagram such that $UX\colon \ICat\to\CCat$ has a universal cocone $c\colon UX\to\brc{C}$ in $\CCat$. We let $\bar{C}\in\CCat^\TT$ with $U\bar{C}=C$ be the algebra structure on $C$ given by \cref{lem:colimit:alg}, so that $c\colon UX\to\brc{UC}$ lifts to a cocone of algebras $\bar{c}\colon X\to \brc{C}$. As $U\colon \CCat^\TT\to\CCat$ reflects colimits of diagrams in $\mathbb{D}$ (\cref{lem:colimit:reflect}), we conclude that the cocone $\bar{c}\colon X\to \brc{C}$ is indeed universal in $\CCat^\TT$.\qed
    \end{proof}
  \end{xsect}

\begin{xsect}{Geometry in a 2-category}
  \NewDocumentCommand\KK{}{\mathcal{K}}

  In this section, we elucidate the 2-categorical universal properties that will play a role in the constructive study of the lifting doctrine on dcpos. Although we of course have need only for \emph{poset-enriched} versions of what follows, we first work in as much generality as possible in order to lay the foundations for future investigations of higher-dimensional domain theory outside the locally posetal setting.

  \begin{definition}\label{def:sierpinski-space}
    Let $\KK$ be any 2-category with a terminal object; a \demph{Sierpi\'nski space} is then defined to be a cocomma object of the following form:
    \begin{equation*}
      \begin{tikzpicture}[diagram,baseline=(current bounding box.center)]
        \SpliceDiagramSquare{
          nw = \One,
          ne = \One,
          sw = \One,
          east = \top,
          south = \bot,
          se = \Sigma,
          north/style = {double},
          west/style = {double},
        }
        \node[between = sw and ne] {\rotatebox{45}{$\Rightarrow$}};
      \end{tikzpicture}
    \end{equation*}

    Equivalently, the Sierpi\'nski space is the \emph{tensor} $\Delta^1\cdot \One$ where $\Delta^1$ is the directed interval category $\brc{0\to 1}$.
  \end{definition}

  Reading \cref{def:sierpinski-space} in the 2-category of dcpos, the Sierpi\'nski space $\Sigma$ is, if it exists, the smallest dcpo that contains two points $\bot,\top:\Sigma$ and an inequality $\bot\sqsubseteq\top$.\footnote{As we will see, this description does \emph{not} imply that the Sierpi\'nski dcpo has exactly two points!} The Sierpi\'nski space is a special case of a more general gluing construction called the Sierpi\'nski cone:

  \begin{definition}\label{def:scone}
    The
    \demph{Sierpi\'nski cone} of an object $A:\KK$ in a 2-category $\KK$ with a terminal object is defined to be the following cocomma object:
    \begin{equation*}
      \begin{tikzpicture}[diagram,baseline=(current bounding box.center)]
        \SpliceDiagramSquare{
          nw = A,
          ne = A,
          sw = \One,
          east = \top,
          south = \bot,
          se = \Sigma A,
          west = {!}_A,
          north/style = {double},
        }
        \node[between = sw and ne] {\rotatebox{45}{$\Rightarrow$}};
      \end{tikzpicture}
    \end{equation*}
  \end{definition}

  The geometry of \cref{def:scone} is that $\Sigma{A}$ adjoins an additional point ``to the left'' of $A$, which forms the apex of a cone in $A$ whose endpoints lie in $A$. Of course, we have $\Sigma = \Sigma\One$ and further generic ``finite chain'' figures can be obtained by iteration; for instance, $\Sigma_n :\equiv \Sigma^n\One$ would be the generic chain with $n$ segments.

  \begin{observation}\label{lem:prod-preserves-cocomma}
    Product 2-functors $A\times-$ in a cartesian closed 2-category preserve cocomma squares.
  \end{observation}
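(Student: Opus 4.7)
The plan is to deduce this from the general fact that left 2-adjoints in a 2-category preserve 2-colimits. In a cartesian closed 2-category $\KK$, the product 2-functor $A\times -\colon \KK\to \KK$ is left 2-adjoint to the exponential $(-)^A$. A cocomma square is a particular 2-categorical colimit, namely the weighted (or oplax) colimit of a span $f\colon X\to B$, $g\colon X\to C$ with respect to the weight that freely adjoins a 2-cell between the two cocone legs into the apex. Applying the standard preservation theorem for 2-adjoints therefore gives the result immediately.

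For readers who prefer to verify the universal property by hand, the argument proceeds as follows. Let $Q$ denote the cocomma of $f$ and $g$, equipped with structure maps $i_B\colon B\to Q$, $i_C\colon C\to Q$ and 2-cell $\iota\colon i_B\circ f\Rightarrow i_C\circ g$. Given candidate data $p\colon A\times B\to D$, $q\colon A\times C\to D$ and a 2-cell $\alpha\colon p\circ (A\times f)\Rightarrow q\circ (A\times g)$, I would transpose under the exponential adjunction to obtain $\hat p\colon B\to D^A$, $\hat q\colon C\to D^A$, and $\hat\alpha\colon \hat p\circ f\Rightarrow \hat q\circ g$. The universal property of $Q$ then produces a unique 1-cell $h\colon Q\to D^A$ compatible with this transposed cocone, and transposing $h$ back across the exponential adjunction yields the required unique factorisation $A\times Q\to D$ with the correct restrictions and 2-cell. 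The 2-dimensional part of the universal property, asserting that 2-cells out of the cocomma are themselves uniquely determined by their restrictions to $A\times B$ and $A\times C$, transfers by the same transposition argument.

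The main (though minor) obstacle is to keep track of the fact that $A\times -\dashv (-)^A$ is a genuine 2-adjunction, so that the hom-isomorphism is natural in 2-cells as well as in 1-cells; only then do both the 1-dimensional and 2-dimensional aspects of the cocomma universal property transfer coherently across it. This is already built into what it means for $\KK$ to be cartesian closed \emph{as a 2-category}, so no real work is required beyond unpacking definitions.
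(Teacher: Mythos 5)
Your argument is correct: since the paper states this as an unproved observation, the intended justification is precisely the one you give, namely that in a cartesian closed 2-category $A\times-$ is a left 2-adjoint to $(-)^A$ and hence preserves all weighted 2-colimits, of which cocomma objects are an instance (the paper itself invokes exactly this reasoning for $\Sigma\times-$ in the proof of the enrichment-of-colimits corollary). Your hand-verification by transposing cocones and 2-cells across the exponential 2-adjunction is also sound and adds nothing beyond the standard preservation theorem.
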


  \begin{lemma}\label{lem:phoa}
    Let $\KK$ be a 2-category with a terminal object and an exponentiable Sierpi\'nski space $\Sigma$; then for any $Y\in\KK$, the following lax square induced by evaluation at the generic 2-cell $\bot\sqsubseteq\top$ is a comma square in $\KK$:
    \begin{equation}\label[diagram]{diag:lem:phoa}
      \begin{tikzpicture}[diagram,baseline=(current bounding box.center)]
        \SpliceDiagramSquare{
          nw = Y^\Sigma,
          ne = Y,
          sw = Y,
          north = -\top,
          west = -\bot,
          se = Y,
          south/style = double,
          east/style = double,
        }
        \node[between = sw and ne] {\rotatebox{45}{$\Rightarrow$}};
      \end{tikzpicture}
    \end{equation}
  \end{lemma}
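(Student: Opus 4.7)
The plan is to prove this by combining three ingredients: (i) the exponential adjunction for $\Sigma$, (ii) the dual universal property of $\Sigma$ as a cocomma (\cref{def:sierpinski-space}), and (iii) the fact that products preserve cocomma squares (\cref{lem:prod-preserves-cocomma}). A comma square with identities on the south and east edges has the following universal property: for any $X$, giving a 1-cell $h\colon X \to Y^\Sigma$ together with its induced 2-cell $(-\bot)\circ h \Rightarrow (-\top)\circ h$ should be equivalent to giving a pair of 1-cells $f, g \colon X \to Y$ together with a 2-cell $f \sqsubseteq g$ in $\KK$. That is exactly what I will establish.

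First, I would apply the exponential adjunction to recast the question. Maps $h \colon X \to Y^\Sigma$ correspond bijectively to maps $\tilde{h} \colon X \times \Sigma \to Y$, with $(-\top)\circ h$ and $(-\bot)\circ h$ corresponding to $\tilde{h}\circ(X\times \top)$ and $\tilde{h}\circ(X\times \bot)$ respectively, and the induced 2-cell corresponding to whiskering $\tilde{h}$ by the generic 2-cell $X \times \bot \sqsubseteq X \times \top$. Next, because $X \times -$ preserves the cocomma defining $\Sigma$ by \cref{lem:prod-preserves-cocomma}, the object $X \times \Sigma$ is itself the cocomma of the identity $X \rightrightarrows X$ equipped with the constant 2-cell $\bot \sqsubseteq \top$. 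Unpacking the cocomma universal property, a map $\tilde{h}\colon X\times \Sigma\to Y$ is exactly the data of a pair of maps $f,g\colon X\to Y$ together with a 2-cell $\alpha\colon f \sqsubseteq g$. Chasing the isomorphisms confirms that these $f, g, \alpha$ are precisely the data $(-\bot)\circ h$, $(-\top)\circ h$, and the whiskered 2-cell, establishing the 1-dimensional universal property of the comma square.

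For the 2-dimensional universal property, I would run the same argument one step up: a 2-cell between two maps $h, h'\colon X \to Y^\Sigma$ corresponds under the exponential adjunction to a 2-cell $\tilde{h} \Rightarrow \tilde{h'}$ between maps $X \times \Sigma \to Y$; by the 2-dimensional universal property of the cocomma $X \times \Sigma$ (which is inherited from $\Sigma$ via \cref{lem:prod-preserves-cocomma}), such a 2-cell is the same as a pair of 2-cells $f \Rightarrow f'$, $g \Rightarrow g'$ compatible with the inequalities $\alpha, \alpha'$ in the evident sense. This is exactly the 2-cell data required by the comma square.

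The main obstacle is purely bookkeeping: one must check that the correspondences at the 1-cell and 2-cell levels genuinely align all four pieces of data — the two composite projections and the compatibility of the induced 2-cell with the given $\alpha$ — rather than merely establishing a bijection on underlying sets of maps. Everything reduces to the strict naturality of the exponential adjunction together with the cocomma universal properties already set up in \cref{def:sierpinski-space,def:scone,lem:prod-preserves-cocomma}, and no new constructive subtlety enters here, since the argument proceeds formally inside any cartesian closed 2-category admitting the relevant cocommas.
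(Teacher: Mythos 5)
Your proposal is correct and follows essentially the same route as the paper: the paper's proof is a chain of adjunction isomorphisms $\KK\prn{X,Y^\Sigma}\cong\KK\prn{X\times\Sigma,Y}\cong\KK\prn{\Delta^1\cdot X,Y}\cong\mathbf{Cat}\prn{\Delta^1,\KK\prn{X,Y}}\cong\KK\prn{X,\Delta^1\pitchfork Y}$, which uses exactly your three ingredients --- the exponential transpose, the identification of $\Sigma$ as the tensor $\Delta^1\cdot\One$ (equivalently the cocomma of \cref{def:sierpinski-space}), and preservation of that 2-colimit by $X\times-$. Your version merely unpacks the tensor/power language into explicit cocomma universal properties and handles the 2-dimensional aspect by hand where the paper's hom-category isomorphisms absorb it automatically.
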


  \begin{proof}
    Equivalently, we must check that $Y^\Sigma$ is the power $\Delta^1\pitchfork Y$. The proof is (2-)adjoint calisthenics, using the characterisation of $\Sigma$ as the power $\Delta^1\cdot\One$.
    \begin{align*}
      \KK\prn{X, Y^\Sigma}
       & \cong
      \KK\prn{X\times\Sigma, Y}
      \\
       & \cong \KK\prn{X\times \prn{\Delta^1\cdot\One},Y}
      \\
       & \cong \KK\prn{\Delta^1\cdot X, Y}
      \\
       & \cong \mathbf{Cat}\prn{\Delta^1,\KK\prn{X,Y}}
      \\
       & \cong \KK\prn{X,\Delta^1\pitchfork Y}
    \end{align*}

    Thus it follows that \cref{diag:lem:phoa} is a co-comma square.\qed
  \end{proof}
\end{xsect}
 
  \begin{xsect}{Partial products in a 2-category}
    \NewDocumentCommand\KK{}{\mathcal{K}}

    Finally, we recall the notion of \emph{(op)fibration} and \emph{partial product} in a 2-category~\cite[\S{}B4.4]{johnstone:2002}. In this section, let $\KK$ be a finitely complete 2-category. We will prefer the “Chevalley criterion” for opfibrations described below.

    \begin{definition}[Loregian and Riehl~\cite{loregian-riehl:2020}]\label{def:cocart-fib}
      A 1-cell $p\colon E\to B$ in $\KK$ is called an \demph{opfibration} when the canonical arrow $\Delta^1 \pitchfork E \to p\downarrow B$ corresponding to the lax square below has a left adjoint right inverse:
      \begin{equation*}
        \begin{tikzpicture}[diagram,baseline=(current bounding box.center)]
          \SpliceDiagramSquare{
            ne = B,
            se = B,
            sw = E,
            south = p,
            nw = \Delta^1\pitchfork E,
            east/style = double,
            west = \partial_0,
            north = p\circ\partial_1,
          }
          \node[between = sw and ne] {\rotatebox{45}{$\Rightarrow$}};
        \end{tikzpicture}
      \end{equation*}
    \end{definition}

    \begin{construction}[Lifting 2-cells to generalised fibers]\label{con:opfib-lift}
      As Hazratpour and Vickers~\cite{hazratpour-vickers:2020} point out, an opfibration in the sense of \cref{def:cocart-fib} can be equipped with operations corresponding to the more nuts-and-bolt description of internal opfibrations given by Johnstone~\cite{johnstone:2002}. In particular, for a given 2-cell $\alpha\colon f\to g$ in $\KK\prn{C,B}$ we may define a 1-cell $\alpha^*E\colon f^*E\to g^*E$ between pullbacks. In particular, the 2-cell determines a 1-cell $f^*E\to p\downarrow B$, where $p\circ p^*f \cong  f\circ f^*p$ is the canonical isomorphism of the pullback square:
      \begin{equation*}
        \begin{tikzpicture}[scale=0.75,baseline=(current bounding box.center)]
          \CreateRect{3}{3}
          \path
          coordinate[label=above:$\strut p^*f$] (n/p-f) at (spath cs:north 0.2)
          coordinate[label=below:$\strut f^*p$] (s/f-p) at (sw -| n/p-f)
          coordinate[label=above:$\strut p$] (n/p) at (spath cs:north 0.5)
          coordinate[label=below:$\strut g$] (s/g) at (spath cs:south 0.8)
          ;
          \begin{pgfinterruptboundingbox}
            \draw[spath/save=swooshr] (n/p-f) to[out=-90,in=90] (s/g);
            \draw[spath/save=swooshl] (n/p) to[out=-90,in=90] (s/f-p);
            \path[name intersections={of=swooshr and swooshl}] coordinate[dot,label=left:$\cong$] (phi) at (intersection-1);
            \path coordinate[dot,label=left:$\alpha$] (alpha) at (spath cs:swooshr 0.66);
            \begin{scope}[on background layer]
              \tikzset{
                spath/split at intersections={swooshl}{swooshr},
                spath/get components of={swooshr}\cmpr,
                spath/get components of={swooshl}\cmpl
              }
              \fill[cate] (nw) to (n/p-f) to[spath/use={\getComponentOf\cmpr1,weld}] (phi.center) to[spath/use={\getComponentOf\cmpl2,weld}] (s/f-p) to (sw) to cycle;
              \fill[catc] (n/p-f) to[spath/use={\getComponentOf\cmpr1,weld}] (phi.center) to[spath/use={\getComponentOf\cmpl1,weld,reverse}] (n/p) to cycle;
              \fill[catc] (s/f-p) to[spath/use={\getComponentOf\cmpl2,weld,reverse}] (phi.center) to[spath/use={\getComponentOf\cmpr2,weld}] (s/g) to cycle;
              \fill[catd] (ne) to (n/p) to[spath/use={\getComponentOf\cmpl1,weld}] (phi.center) to[spath/use={\getComponentOf\cmpr2,weld}] (s/g) to (se) to cycle;
            \end{scope}
          \end{pgfinterruptboundingbox}
        \end{tikzpicture}
      \end{equation*}

      Postcomposing with the left adjoint right inverse to $\Delta^1\pitchfork E \to p\downarrow B$, we obtain the following cells and equations:
      \begin{equation}\label[diagram]{diag:opfib-lift:g-bar}
        \begin{tikzpicture}[scale=0.75,baseline=(current bounding box.center)]
          \CreateRect{3}{2}
          \path
          coordinate[label=above:$\strut \bar g$] (n/g') at (spath cs:north 0.33)
          coordinate[label=above:$\strut p$] (n/p) at (spath cs:north 0.66)
          coordinate[label=below:$\strut f^*p$] (s/f-p) at (sw -| n/g')
          coordinate[label=below:$\strut g$] (s/g) at (sw -| n/p)
          ;
          \begin{pgfinterruptboundingbox}
            \draw[spath/save=swooshr] (n/g') to[out=-90,in=90] (s/g);
            \draw[spath/save=swooshl] (n/p) to[out=-90,in=90] (s/f-p);
            \path[name intersections={of=swooshr and swooshl}] coordinate[dot,label=left:$\cong$] (chi) at (intersection-1);
            \begin{scope}[on background layer]
              \tikzset{
                spath/split at intersections={swooshl}{swooshr},
                spath/get components of={swooshr}\cmpr,
                spath/get components of={swooshl}\cmpl
              }
              \fill[cate] (nw) to (n/g') to[spath/use={\getComponentOf\cmpr1,weld}] (chi.center) to[spath/use={\getComponentOf\cmpl2,weld}] (s/f-p) to (sw) to cycle;
              \fill[catc] (n/g') to[spath/use={\getComponentOf\cmpr1,weld}] (chi.center) to[spath/use={\getComponentOf\cmpl1,weld,reverse}] (n/p) to cycle;
              \fill[catc] (s/f-p) to[spath/use={\getComponentOf\cmpl2,weld,reverse}] (chi.center) to[spath/use={\getComponentOf\cmpr2,weld}] (s/g) to cycle;
              \fill[catd] (ne) to (n/p) to[spath/use={\getComponentOf\cmpl1,weld}] (chi.center) to[spath/use={\getComponentOf\cmpr2,weld}] (s/g) to (se) to cycle;
            \end{scope}
          \end{pgfinterruptboundingbox}
        \end{tikzpicture}
        \quad
        \begin{tikzpicture}[scale=0.75,baseline=(current bounding box.center)]
          \CreateRect{2}{2}
          \path
          coordinate[label=above:$\strut p^*f$] (n/p-f) at (spath cs:north 0.5)
          coordinate[label=below:$\strut \bar{g}$] (s/g') at (sw -| n/p-f);
          \begin{pgfinterruptboundingbox}
            \draw (n/p-f) to coordinate[dot,label=left:$\bar{\alpha}$] (alpha') (s/g');
            \begin{scope}[on background layer]
              \fill[cate] (nw) rectangle (s/g');
              \fill[catc] (n/p-f) rectangle (se);
            \end{scope}
          \end{pgfinterruptboundingbox}
        \end{tikzpicture}
      \end{equation}
      \begin{equation*}
        \begin{tikzpicture}[scale=0.75,baseline=(current bounding box.center)]
          \CreateRect{3}{3}
          \path
          coordinate[label=above:$\strut p^*f$] (n/p-f) at (spath cs:north 0.2)
          coordinate[label=below:$\strut f^*p$] (s/f-p) at (spath cs:south 0.5)
          coordinate[label=above:$\strut p$] (n/p) at (spath cs:north 0.8)
          coordinate[label=below:$\strut g$] (s/g) at (spath cs:south 0.8)
          ;
          \begin{pgfinterruptboundingbox}
            \draw[spath/save=swooshr] (n/p-f) to[out=-90,in=90] (s/g);
            \draw[spath/save=swooshl] (n/p) to[out=-90,in=90] (s/f-p);
            \path[name intersections={of=swooshr and swooshl}] coordinate[dot,label=left:$\cong$] (phi) at (intersection-1);
            \path coordinate[dot,label=left:$\bar\alpha$] (alpha') at (spath cs:swooshr 0.33);
            \begin{scope}[on background layer]
              \tikzset{
                spath/split at intersections={swooshl}{swooshr},
                spath/get components of={swooshr}\cmpr,
                spath/get components of={swooshl}\cmpl
              }
              \fill[cate] (nw) to (n/p-f) to[spath/use={\getComponentOf\cmpr1,weld}] (phi.center) to[spath/use={\getComponentOf\cmpl2,weld}] (s/f-p) to (sw) to cycle;
              \fill[catc] (n/p-f) to[spath/use={\getComponentOf\cmpr1,weld}] (phi.center) to[spath/use={\getComponentOf\cmpl1,weld,reverse}] (n/p) to cycle;
              \fill[catc] (s/f-p) to[spath/use={\getComponentOf\cmpl2,weld,reverse}] (phi.center) to[spath/use={\getComponentOf\cmpr2,weld}] (s/g) to cycle;
              \fill[catd] (ne) to (n/p) to[spath/use={\getComponentOf\cmpl1,weld}] (phi.center) to[spath/use={\getComponentOf\cmpr2,weld}] (s/g) to (se) to cycle;
            \end{scope}
          \end{pgfinterruptboundingbox}
        \end{tikzpicture}
        =
        \begin{tikzpicture}[scale=0.75,baseline=(current bounding box.center)]
          \CreateRect{3}{3}
          \path
          coordinate[label=above:$\strut p^*f$] (n/p-f) at (spath cs:north 0.2)
          coordinate[label=below:$\strut f^*p$] (s/f-p) at (sw -| n/p-f)
          coordinate[label=above:$\strut p$] (n/p) at (spath cs:north 0.5)
          coordinate[label=below:$\strut g$] (s/g) at (spath cs:south 0.8)
          ;
          \begin{pgfinterruptboundingbox}
            \draw[spath/save=swooshr] (n/p-f) to[out=-90,in=90] (s/g);
            \draw[spath/save=swooshl] (n/p) to[out=-90,in=90] (s/f-p);
            \path[name intersections={of=swooshr and swooshl}] coordinate[dot,label=left:$\cong$] (phi) at (intersection-1);
            \path coordinate[dot,label=left:$\alpha$] (alpha) at (spath cs:swooshr 0.66);
            \begin{scope}[on background layer]
              \tikzset{
                spath/split at intersections={swooshl}{swooshr},
                spath/get components of={swooshr}\cmpr,
                spath/get components of={swooshl}\cmpl
              }
              \fill[cate] (nw) to (n/p-f) to[spath/use={\getComponentOf\cmpr1,weld}] (phi.center) to[spath/use={\getComponentOf\cmpl2,weld}] (s/f-p) to (sw) to cycle;
              \fill[catc] (n/p-f) to[spath/use={\getComponentOf\cmpr1,weld}] (phi.center) to[spath/use={\getComponentOf\cmpl1,weld,reverse}] (n/p) to cycle;
              \fill[catc] (s/f-p) to[spath/use={\getComponentOf\cmpl2,weld,reverse}] (phi.center) to[spath/use={\getComponentOf\cmpr2,weld}] (s/g) to cycle;
              \fill[catd] (ne) to (n/p) to[spath/use={\getComponentOf\cmpl1,weld}] (phi.center) to[spath/use={\getComponentOf\cmpr2,weld}] (s/g) to (se) to cycle;
            \end{scope}
          \end{pgfinterruptboundingbox}
        \end{tikzpicture}
      \end{equation*}

      The isomorphism $p\circ \bar{g}\cong g\circ f^*p$ depicted in \cref{diag:opfib-lift:g-bar} is precisely the data of a suitable map $\alpha^*E\colon f^*E\to g^*E$, considering the universal property of $g^*E$.
    \end{construction}

    The following notion is described by Johnstone~\cite{johnstone:2002} as a \emph{partial product cone}.

    \begin{definition}[Johnstone~\cite{johnstone:2002}]
      Let $p\colon E\to B$ in be an opfibration in $\KK$, and let $A$ be a 0-cell in $\KK$. A \demph{nondeterministic map} from $C$ to $A$ with coefficients in $p\colon E\to B$ is defined to consist of a 1-cell $u\colon C\to B$ equipped with a further 1-cell $e\colon u^*E\to A$ as depicted below:
      \begin{equation*}
        \begin{tikzpicture}[diagram,baseline=(current bounding box.center)]
          \SpliceDiagramSquare<sq/>{
            ne = E,
            se = B,
            nw = u^*E,
            sw = C,
            south = u,
            east = p,
            nw/style = pullback,
          }
          \node[left = of sq/nw] (nw) {$A$};
          \draw[->] (sq/nw) to node[above] {$e$} (nw);
        \end{tikzpicture}
      \end{equation*}

      A morphism of such nondeterministic maps from $\prn{u,e}$ to $\prn{u',e'}$ is given by a 2-cell $\alpha\colon u\to u'$ together with a further 2-cell $\beta\colon e\to \alpha^*E \ast e'$ where $\alpha^*E\colon u^*E\to u'^*E$ is as described in \cref{con:opfib-lift}.
    \end{definition}

    We shall write $\KK^p\prn{-,A} \colon \OpCat{\KK}\to\mathbf{Cat}$ for the pseudofunctor sending 0-cells $C\in\KK$ to the category of nondeterministic maps from $C$ to $A$ with coefficients in $p$.

    \begin{definition}\label{def:partial-product}
      The \demph{partial product} of an opfibration $p\colon E\to B$ in $\KK$ with a 0-cell $A$ is a 0-cell $\mathcal{P}_{\bullet}\prn{p,A}$ representing the pseudofunctor $\KK^p\prn{-,A}$ in the sense that we have a pseudonatural equivalence $\KK\prn{-,\mathcal{P}_\bullet\prn{p,A}} \simeq \KK^p\prn{-,A}$.
    \end{definition}

    When $\mathcal{P}_\bullet\prn{p,A}$ is the partial product of $p\colon E\to B$ with $A\in\KK$, we have in the generic case a nondeterministic map from $\mathcal{P}_\bullet\prn{p,A}$ to itself, as depicted below:
    \begin{equation*}
      \begin{tikzpicture}[diagram,baseline=(current bounding box.center)]
        \SpliceDiagramSquare<sq/>{
          ne = E,
          se = B,
          nw = u^*E,
          sw = \mathcal{P}_\bullet\prn{p,A},
          south = u,
          east = p,
          nw/style = pullback,
        }
        \node[left = of sq/nw] (nw) {$A$};
        \draw[->] (sq/nw) to node[above] {$e$} (nw);
      \end{tikzpicture}
    \end{equation*}

    In this case, we shall refer to the above as the \demph{universal nondeterministic map out of $A$ with coefficients in $p\colon E\to B$.}

  \end{xsect}

\end{xsect}

\begin{xsect}{Basic notions in constructive domain theory}

  We recall the basics of the (constructive) theory of dcpos and their lifting monad, following the exposition of De~Jong and \'Escardo~\cite{dejong:2021,dejong-escardo:2021}. The main difference in relation to \opcit is that we assume propositional resizing, as we are not concerned here with predicativity.

  \begin{definition}
    A partial order $A$ is called a \demph{directed-complete} when any directed subset $U\subseteq A$ has a supremum in $A$. A morphism of directed-complete partial orders is a \demph{Scott--continuous function}, \ie a function that preserves directed suprema.
  \end{definition}

  We shall refer to directed-complete partial orders a \emph{dcpo}s, writing writing $\Dcpo$ for the category of dcpos and Scott continuous maps. Note that Scott--continuous functions are automatically monotone.

  \begin{xsect}{Open subspaces and their classifier}\label{sec:open-subsets}

    We recall the notion of Scott--open subset of a dcpo in the constructive setting, \eg from De~Jong~\cite{dejong:2021}.

    \begin{definition}
      A subset $U\subseteq A$ of a dcpo $A$ is called \demph{Scott--open} when it is upward closed and, moreover, inaccessible by directed suprema in the sense that for any directed subset $S\subseteq A$ with $\DLub{}{S} \in U$, there exists an element $s\in S$ such that $s\in U$.
    \end{definition}

    \begin{remark}
      Note that the appropriate notion of Scott--closed subset is \emph{not} obtained by taking complements of Scott--open subsets, except in the case of continuous dcpos~\cite{dejong:2021}. We will not deal with closed subsets in this paper.
    \end{remark}

    We shall refer to the subdcpo spanned by a given Scott--open subset as a \demph{Scott--open subspace}. A morphism of dcpos $i\colon A\to B$ factoring through an isomorphism onto an open subspace of $B$ is called a \demph{Scott--open immersion}.
    We will observe that \emph{universal monomorphism} $\top\colon \One\rightarrowtail\Omega$ in the category of sets\footnote{To be more precise, we mean the ambient topos when we speak of ``sets''.} extends to a \emph{universal Scott--open immersion} in the world of dcpos.

    \begin{lemma}\label{lem:subobject-classifier-is-dcpo}
      The universe $\Omega$ of propositions is a dcpo with its implication order.
    \end{lemma}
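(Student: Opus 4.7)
The plan is to equip $\Omega$ with the implication order and construct suprema by existential quantification; in fact $\Omega$ will carry all small joins, with directed joins arising as a special case.

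First I would verify that implication defines a partial order on $\Omega$. Reflexivity and transitivity are immediate; antisymmetry amounts to propositional extensionality, \ie that $\phi\Leftrightarrow\psi$ entails $\phi=\psi$ as elements of $\Omega$, which holds in any elementary topos.

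Next, given a directed subset $U\subseteq\Omega$, I would define
\[
  \DLub{}{U} := \exists \phi.\ \phi\in U \land \phi,
\]
the proposition asserting that some element of $U$ is true. This is an upper bound for $U$: any $\psi\in U$ witnesses the existential by taking $\phi=\psi$, so $\psi\Rightarrow \DLub{}{U}$. For the least-upper-bound property, suppose $t\in\Omega$ satisfies $\phi\Rightarrow t$ for every $\phi\in U$; then a single application of existential elimination yields $\prn{\exists \phi.\ \phi\in U\land\phi}\Rightarrow t$, as required.

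No real obstacle arises: the argument does not in fact use directedness of $U$ at all, so $\Omega$ is even a complete lattice in the internal language of any elementary topos. The only delicate point is the appeal to propositional extensionality for antisymmetry, but this is built into the topos-theoretic setting we work in.
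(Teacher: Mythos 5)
Your proposal is correct and follows exactly the paper's (much terser) argument: implication gives the partial order and existential quantification supplies all joins, so $\Omega$ is a sup-lattice and in particular a dcpo. The observation that directedness is never used is precisely the paper's point as well.
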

    \begin{proof}
      Implication clearly gives rise to a partial order on $\Omega$; the existential quantifier ensures that $\Omega$ is in fact a sup-lattice, and thus a dcpo.\qed
    \end{proof}

    \begin{theorem}\label{lem:universal-open-immersion}
      The morphism $\top\colon \One\hookrightarrow\Omega$ is the \demph{universal Scott--open immersion} in $\Dcpo$, in the sense that $\top\colon\One\hookrightarrow\Omega$ is a Scott--open immersion and that for any other Scott--open immersion $i\colon U\hookrightarrow A$, there exists a unique cartesian square from $i$ to $\top$ in $\Dcpo$ as depicted below:
      \begin{equation*}
        \DiagramSquare{
          ne = \One,
          se = \Omega,
          sw = A,
          nw = U,
          west = i,
          east = \top,
          north = !_U,
          nw/style = pullback,
          south = \exists! \brk{i},
          west/style = embedding,
          east/style = embedding,
          south/style = {exists,->},
          north/style = {exists,->},
          height = 1.5cm,
        }
      \end{equation*}
    \end{theorem}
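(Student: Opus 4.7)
The plan is to verify the two separate assertions: first that $\top\colon\One\hookrightarrow\Omega$ is itself a Scott--open immersion, and then that every Scott--open immersion factors through $\top$ via a unique pullback square. The key bookkeeping is the observation (which I will record as a preliminary remark) that Scott--continuous maps $A\to\Omega$ are in natural bijection with Scott--open subsets of $A$, via $f\mapsto \brc{a\in A\mid f\prn{a}}$; this lets me read off the classifying map mechanically.

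First I would check that $\brc{\top}\subseteq\Omega$ is Scott--open. Upward closure is immediate, since $\top\sqsubseteq\phi$ just says $\phi$ is true. For inaccessibility, given a directed $S\subseteq\Omega$ with $\DLub{}{S}=\top$, I unfold the supremum in $\Omega$, which is by construction the existential quantifier $\exists \phi\in S.\phi$; truth of this proposition delivers some $\phi\in S$ with $\phi = \top$, as required.

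Next, given an arbitrary Scott--open immersion $i\colon U\hookrightarrow A$, which I may replace up to isomorphism by an honest Scott--open subset $U\subseteq A$, I define $\brk{i}\colon A\to\Omega$ by $a\mapsto \prn{a\in U}$. Monotonicity is upward closure of $U$, and preservation of directed suprema is precisely the inaccessibility condition: $\brk{i}\prn{\DLub{}{S}}$ is the proposition $\DLub{}{S}\in U$, while $\DLub{\phi\in S}{}\brk{i}\prn{\phi}$ is the proposition $\exists s\in S. s\in U$, and these are equivalent by Scott--openness (and upward closure for the trivial direction). The pullback square is then verified pointwise: the fibre of $\brk{i}$ over $\top$ is exactly $\brc{a\in A\mid a\in U} = U$, and since the forgetful functor $\Dcpo\to\mathbf{Set}$ creates limits and the subposet order on $U$ already agrees with that induced from $A$, the pullback computed in $\mathbf{Set}$ is automatically the pullback in $\Dcpo$.

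Uniqueness is the final and easiest point: if $f\colon A\to\Omega$ makes the square cartesian, then $f^{-1}\prn{\top} = U$ as subsets of $A$, but for any $a\in A$ we have $f\prn{a}=\top$ iff the proposition $f\prn{a}$ is true, so $f\prn{a}\Leftrightarrow \prn{a\in U} = \brk{i}\prn{a}$ and hence $f = \brk{i}$ by propositional extensionality. I do not foresee a genuine obstacle here; the only subtle point is the Scott--continuity of $\brk{i}$, where it is essential to remember that suprema in $\Omega$ are computed by existential quantification so that the inaccessibility clause of Scott--openness translates directly into preservation of directed suprema.
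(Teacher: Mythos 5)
Your proposal is correct and follows essentially the same route as the paper: the paper's proof reduces to the universal property of $\top\colon\One\rightarrowtail\Omega$ as the subobject classifier in sets, with the single substantive observation that the characteristic function of a subset of a dcpo is Scott--continuous if and only if the subset is Scott--open (joins in $\Omega$ being existential quantification), which is exactly the computation you carry out in detail. Your version merely unpacks what the paper leaves implicit, including the check that $\brc{\top}\subseteq\Omega$ is itself Scott--open and the use of propositional extensionality for uniqueness.
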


    \begin{proof}
      Without loss of generality, we may consider the open immersion induced by a Scott--open subset $U$ of $A$. As the forgetful functor from dcpos to their underlying sets is faithful, we can deduce our result from the universal property of $\top\colon \One\rightarrowtail\Omega$ as the universal monomorphism in the category of sets; in particular, it is enough to observe that the characteristic function of a subset of a dcpo is Scott--continuous if and only if the subset is Scott--open, recalling that joins in $\Omega$ are given by existential quantification.\qed
    \end{proof}

  \end{xsect}

  \begin{xsect}{Geometry of the Scott--open subspace classifier}\label{sec:sierpinski-dcpo}
    $\Dcpo$ is easily seen to be enriched in posets; given $f,g\colon A\to B$ we define $f\sqsubseteq g$ if and only if $f x \sqsubseteq g x$ for all $x:A$. This enrichment turns $\Dcpo$ into a (locally posetal) 2-categories, and so we may consider 2-categorical limits and colimits.

    We have seen a ``right-handed'' or limit-style universal property for $\Omega$ as the base of the universal Scott--open immersion (\cref{lem:universal-open-immersion}). In this section, we will see that $\Omega$ has an alternative left-handed universal property as the \emph{Sierpi\'nski space} (\cref{def:sierpinski-space}) in the 2-category of dcpos. These two universal properties reflect the role of $\Omega$ as a dualising object in the algebro-geometric context of domain theory.

    \begin{theorem}\label{lem:subobject-classifier-is-sigma}
      The following is a cocomma square in the 2-category $\Dcpo$, and so $\Omega$ is the Sierpi\'nski space in the sense of \cref{def:sierpinski-space}:
      \begin{equation*}
        \begin{tikzpicture}[diagram,baseline=(current bounding box.center)]
          \SpliceDiagramSquare{
            nw = \One,
            ne = \One,
            sw = \One,
            east = \top,
            south = \bot,
            se = \Omega,
            north/style = {double},
            west/style = {double},
          }
          \node[between = sw and ne] {\rotatebox{45}{$\Rightarrow$}};
        \end{tikzpicture}
      \end{equation*}
    \end{theorem}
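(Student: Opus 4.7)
The plan is to unpack the cocomma universal property in the locally posetal 2-category $\Dcpo$: giving a 2-cell from the outside of the square into a dcpo $Y$ amounts to specifying two points $y_0, y_1 \in Y$ together with an inequality $y_0 \sqsubseteq y_1$. I must then exhibit, for each such datum, a unique Scott--continuous $h \colon \Omega \to Y$ with $h(\bot) = y_0$ and $h(\top) = y_1$; the generic 2-cell $\bot \sqsubseteq \top$ in $\Omega$ will then automatically paste with $h$ to yield $y_0 \sqsubseteq y_1$, since every Scott--continuous map is monotone.

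For existence, I would define $h(\phi) := \bigsqcup\prn{\brc{y_0} \cup \brc{y_1 \mid \phi}}$. The indexing poset $\One + \phi$ is directed, since $\phi$ is a subsingleton sitting above a basepoint, and the associated family in $Y$ is monotone by $y_0 \sqsubseteq y_1$; hence the supremum exists in $Y$. The boundary equations $h(\bot) = y_0$ and $h(\top) = y_1$ then hold directly, and Scott--continuity of $h$ reduces to the fact that directed joins in $\Omega$ are existential quantifications, so $\brc{y_1 \mid \bigvee_i\phi_i}$ coincides with $\bigcup_i \brc{y_1 \mid \phi_i}$ and the supremum commutes past the union.

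For uniqueness, I would exploit the observation that every proposition $\phi \in \Omega$ is itself the directed join $\bigvee \prn{\brc{\bot} \cup \brc{\top \mid \phi}}$ in $\Omega$; applying Scott--continuity, any $h'$ with $h'(\bot) = y_0$ and $h'(\top) = y_1$ is forced to agree with $h$ at every $\phi$. The main subtlety throughout --- and the reason classical proofs need adjustment --- is constructive: one cannot case-split on whether $\phi$ holds, so the directedness of $\brc{y_0} \cup \brc{y_1 \mid \phi}$, the existence of its supremum, and the canonical presentation of $\phi$ as a directed join in $\Omega$ must all be argued via the indexing poset $\One + \phi$ rather than by dichotomising on $\phi$. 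Once this bookkeeping is in place, the verification is entirely routine.
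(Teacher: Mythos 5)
Your proposal is correct and follows essentially the same route as the paper's proof: the factoring map is defined by the identical formula $h(\phi) = \bigsqcup\prn{\brc{y_0}\cup\brc{y_1\mid\phi}}$ indexed over $\One+\phi$, continuity is checked via the existential-quantifier description of joins in $\Omega$, and uniqueness is obtained by writing each $\phi$ as the directed join of $\brc{\bot}\cup\brc{\top\mid\phi}$. The only difference is expository: you spell out the constructive bookkeeping (directedness of $\One+\phi$, the automatic pasting of the 2-cell in the locally posetal setting) that the paper leaves implicit.
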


    \begin{proof}
      Consider an arbitrary lax square in the following configuration:
      \begin{equation*}
        \begin{tikzpicture}[diagram,baseline=(current bounding box.center)]
          \SpliceDiagramSquare{
            nw = \One,
            ne = \One,
            sw = \One,
            east = c_1,
            south = c_0,
            se = C,
            north/style = {double},
            west/style = {double},
          }
          \node[between = sw and ne] {\rotatebox{45}{$\Rightarrow$}};
        \end{tikzpicture}
      \end{equation*}

      The universal map $h\colon \Omega\to C$ factoring $c_0$ through $\bot$ and $c_1$ through $\top$ is defined so as to send $\phi:\Omega$ to $\Lub{\One+\phi}\brk{c_0\mid c_1}$, \ie supremum of the union of $\Compr{c_1}{\phi = \top}$ and $\brc{c_0}$. It is also observed easily that this assignment preserves directed suprema in $\Omega$. That  $h\colon \Omega\to C$ is unique with this factorization property follows from the uniqueness of suprema: any map factoring $c_0$ and $c_1$ in this sense is supremum of the same directed subset.\qed
    \end{proof}

    By virtue of \cref{lem:subobject-classifier-is-sigma}, we may define $\Sigma :\equiv \Omega$; therefore, unless the law of excluded middle holds, it need not be the case that $\Sigma$ has exactly two points --- although the law of non-contradiction ensures that no third point can be proved unequal to both $\bot$ and $\top$.\footnote{From the external point of view, there will generally be many distinct \emph{global} points of the internal dcpo $\Sigma$. But even if $p,q,r$ are distinct global points, the topos logic will not deduce $p\not=q\not=r$ unless the topos is the \emph{empty topos} (\ie the trivial category).}

    \begin{remark}
      It is perhaps surprising at first that the Sierpi\'nski space in the 2-category of \emph{posets} nonetheless has only two elements in constructive mathematics, in spite of \cref{lem:subobject-classifier-is-sigma}. This is not so strange, however: the ideal completion 2-functor from posets to dcpos is \emph{left adjoint} to the forgetful functor, and so it necessarily preserves Sierpi\'nski objects. But in constructive mathematics, the set of ideals in $\Two = \brc{0\leq 1}$ necessarily contains \emph{all} directed downsets of $\Two$ and not just the decidable ones; thus we see, by means of a more conceptual argument than that of \cref{lem:subobject-classifier-is-dcpo}, that the Sierpi\'nski dcpo must be given by $\Omega$.
    \end{remark}

    \begin{lemma}\label{lem:top-is-cocart-fib}
      The universal open immersion $\top\colon\One\hookrightarrow\Sigma$ is a opfibration of dcpos in the sense of \cref{def:cocart-fib}.
    \end{lemma}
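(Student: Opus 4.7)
The plan is to verify the Chevalley criterion of \cref{def:cocart-fib} by explicitly computing both $\Delta^1 \pitchfork \One$ and $\top \downarrow \Sigma$ and showing that the canonical comparison map between them is an isomorphism. The verification will exploit two facts that are already at hand: that $\One$ is terminal in $\Dcpo$, and that $\top$ is the maximum element of $\Sigma = \Omega$ under the implication order.

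First I would apply \cref{lem:phoa} with $Y = \One$ to identify $\Delta^1 \pitchfork \One$ with the exponential $\One^\Sigma$; since $\One$ is terminal, this exponential is again terminal, so $\Delta^1 \pitchfork \One \cong \One$. Next I would unfold the universal property of the comma object $\top \downarrow \Sigma$: its points are pairs $(\ast, v) \in \One \times \Sigma$ equipped with a witness of the inequality $\top \sqsubseteq v$ in $\Sigma$. But $\top$ is the top element of $\Omega$ in the implication order (as noted in \cref{sec:sierpinski-dcpo}), so $\top \sqsubseteq v$ forces $v = \top$; hence $\top \downarrow \Sigma \cong \One$ as well.

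Consequently, the canonical comparison 1-cell $\Delta^1 \pitchfork \One \to \top \downarrow \Sigma$ is forced to be the unique map $\One \to \One$, and is therefore the identity. The identity is trivially its own left adjoint right inverse, so the Chevalley criterion of \cref{def:cocart-fib} is satisfied and $\top \colon \One \hookrightarrow \Sigma$ is an opfibration. I do not anticipate any real obstacle: the entire content of the claim is that $\top$ is maximal in $\Sigma$, which leaves no nontrivial 2-cells emanating from $\top$ that would need to be lifted.
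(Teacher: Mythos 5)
Your proposal is correct and follows essentially the same route as the paper, whose entire proof is the observation that $\Delta^1\pitchfork\One\cong\One\cong\top\downarrow\Sigma$, so that the comparison map is an isomorphism and trivially admits a left adjoint right inverse. You merely supply the (correct) justifications for the two isomorphisms — terminality of powers of $\One$, and maximality of $\top$ in $\Omega$ forcing the comma object to collapse — which the paper leaves implicit.
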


    \begin{proof}
      Letting $A$ be an arbitrary dcpo; we must check that the canonical morphism $\Delta^1\pitchfork \One \to \top\downarrow \Sigma$ has a left adjoint right inverse. In fact, $\Delta^1\pitchfork\One\cong \One\cong\top\downarrow\Sigma$, so we are done.\qed
    \end{proof}

    \begin{definition}[Paths between dpco morphisms]
      Let $f,g \colon A \to B$ be a morphism of dcpos; a \emph{path} from $f$ to $g$ is defined to be a morphism $\alpha\colon \Sigma\times A\to B$ such that $\alpha\circ\prn{\bot,-} = f$ and $\alpha\circ\prn{\top,-} = g$.
    \end{definition}

    \begin{corollary}[Path enrichment]\label{cor:path-enrichment}
      The following properties of paths hold:
      \begin{enumerate}
        \item There is at most one path between any two morphisms $f,g\colon A \to B$ of dcpos.
        \item For $f,g\colon A \to B$, there exists a path from $f$ to $g$ if and only if $f\sqsubseteq g$.
      \end{enumerate}
    \end{corollary}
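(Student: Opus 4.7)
The plan is to apply the 2-categorical universal property of $\Sigma$ as the Sierpi\'nski space (\cref{lem:subobject-classifier-is-sigma}) after multiplying by $A$. First, I would invoke \cref{lem:prod-preserves-cocomma}: since $\Dcpo$ is cartesian closed (or at least, since the product $-\times A$ preserves the cocomma, which is what we really need), the square
\[
\begin{tikzpicture}[diagram,baseline=(current bounding box.center)]
\SpliceDiagramSquare{
  nw = A,
  ne = A,
  sw = A,
  east = (\top{,}\,-),
  south = (\bot{,}\,-),
  se = \Sigma\times A,
  north/style = {double},
  west/style = {double},
}
\node[between = sw and ne] {$\Nearrow$};
\end{tikzpicture}
\]
is a cocomma square in $\Dcpo$.

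Next, I would unpack the resulting universal property. A 1-cell $\alpha\colon\Sigma\times A\to B$ is, by the cocomma property, the same datum as a lax square from the upper cospan to a cospan in $B$: that is, a pair of morphisms $f,g\colon A\to B$ together with a 2-cell $f\Rightarrow g$ in $\Dcpo$, where $f=\alpha\circ(\bot,-)$ and $g=\alpha\circ(\top,-)$. Hence a path from $f$ to $g$ (in the sense of the definition) is exactly a 2-cell $f\Rightarrow g$ in $\Dcpo$ with prescribed boundary.

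Both claims now follow immediately from the local posetal enrichment of $\Dcpo$: 2-cells $f\Rightarrow g$ are at most unique and exist precisely when $f\sqsubseteq g$ pointwise. Uniqueness of paths and the equivalence ``there is a path from $f$ to $g$ iff $f\sqsubseteq g$'' are therefore direct translations. There is no real obstacle here; the only care needed is to be sure the cocomma universal property is applied with the correct orientation of the 2-cell (from the $\bot$-leg to the $\top$-leg), so that the induced inequality is $f\sqsubseteq g$ rather than the reverse.
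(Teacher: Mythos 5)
Your proposal is correct and matches the paper's intended argument: the paper's proof simply cites \cref{lem:subobject-classifier-is-sigma}, and your fleshing-out via \cref{lem:prod-preserves-cocomma} and the local posetality of $\Dcpo$ is exactly the reasoning being elided.
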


    \begin{proof}
      These are immediate consequences of \cref{lem:subobject-classifier-is-sigma}.\qed
    \end{proof}

  \end{xsect}

\begin{xsect}{Enriched cocompleteness of the category of dcpos}

  Our study of the Sierpi\'nski space and the path-enrichment of $\Dcpo$ (\cref{cor:path-enrichment}) implies the important property that any 1-categorical colimits of dcpos that we may construct are, in fact, 2-categorical colimits.

  \begin{corollary}[Enrichment of colimits]\label{lem:dcpo-colimits-enriched}
    Colimits in $\Dcpo$ are poset-enriched.
  \end{corollary}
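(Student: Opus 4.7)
The plan is to verify the enriched universal property directly using the path characterisation from \cref{cor:path-enrichment}. Suppose $D\colon \ICat\to\Dcpo$ is a diagram equipped with a 1-categorical colimit cocone $c\colon D\to \brc{C}$. To upgrade this to a poset-enriched (\ie locally posetal 2-categorical) colimit, it suffices to show that the bijection $\Dcpo\prn{C,X}\cong \mathrm{Cocone}\prn{D,X}$ is an order-isomorphism for every dcpo $X$; unpacking, given cocones $c',c''\colon D\to \brc{X}$ with pointwise inequalities $c'_i\sqsubseteq c''_i$ at each vertex $i\in\ICat$, the induced mediating maps $h',h''\colon C\to X$ should satisfy $h'\sqsubseteq h''$.

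First, I would translate the family of inequalities into paths: by \cref{cor:path-enrichment}, each inequality $c'_i\sqsubseteq c''_i$ corresponds to a unique path $\alpha_i\colon \Sigma\times D_i\to X$. Uniqueness of paths together with the cocone conditions on $c'$ and $c''$ ensures that these paths are automatically compatible with the structural morphisms of $D$, so they assemble into a cocone $\alpha\colon \Sigma\times D\to \brc{X}$ over the diagram $\Sigma\times D$.

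Second, I would argue that $\Sigma\times c\colon \Sigma\times D\to \brc{\Sigma\times C}$ remains colimiting. This step relies on the exponentiability of $\Sigma$ in $\Dcpo$, because $\Sigma\times -$ is then left adjoint to $\prn{-}^\Sigma$ (identified with $\Delta^1\pitchfork -$ via \cref{lem:phoa}) and hence preserves all colimits. By this universal property, the cocone $\alpha$ factors uniquely through some $\bar{\alpha}\colon \Sigma\times C\to X$. Composing $\bar{\alpha}$ with $\prn{\bot,-}$ and $\prn{\top,-}$ and invoking the 1-categorical universal property of $c$ once more shows that $\bar{\alpha}\circ\prn{\bot,-} = h'$ and $\bar{\alpha}\circ\prn{\top,-} = h''$, exhibiting $\bar{\alpha}$ as a path from $h'$ to $h''$ and thus $h'\sqsubseteq h''$ by \cref{cor:path-enrichment} again.

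The main obstacle is the second step — ensuring that $\Sigma\times -$ preserves the colimit in question, which reduces to the exponentiability of $\Sigma$ in $\Dcpo$. This is standard, as Scott--continuous function spaces with the pointwise order furnish the cartesian closed structure of $\Dcpo$, so the enriched universal property goes through without further incident.
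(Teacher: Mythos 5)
Your proposal is correct and follows essentially the same route as the paper, which likewise deduces the result from \cref{cor:path-enrichment} together with the cocontinuity of $\Sigma\times-$ (as a left adjoint); you have simply unpacked that one-line argument in full detail. The only direction you leave implicit --- that $h'\sqsubseteq h''$ conversely yields $c'\sqsubseteq c''$ --- is trivial by precomposition with the cocone legs, so nothing is missing.
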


  \begin{proof}
    This follows immediately from \cref{cor:path-enrichment} and the fact that the product functor $\Sigma\times-$ has a right adjoint and is therefore cocontinuous.
  \end{proof}

  We have not seen, however, how to actually construct any given colimit of dcpos. Although it is not hard to see that $\Dcpo$ is cocomplete in a classical metatheory using the adjoint functor theorem~\cite{abramsky-jung:1996}, it is unclear how to satisfy the solution set condition in constructive mathematics, although it may nonetheless be possible. Luckily, it happens that the constructive cocompleteness of $\Dcpo$ is an immediate consequence of the (fully constructive) generalized coverage theorem of Townsend~\cite{townsend:1996}.

  \begin{lemma}[{Townsend~\cite[p.~72]{townsend:1996}}]\label{lem:dcpo-cocomplete}
    The category of dcpos is closed under coequalisers, and is therefore cocomplete.
  \end{lemma}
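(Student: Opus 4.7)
The plan is to apply Townsend's generalized coverage theorem directly to produce coequalisers of Scott--continuous maps, and then deduce cocompleteness by combining coequalisers with the evident construction of coproducts (disjoint unions, ordered so that elements in distinct summands are incomparable) via the standard presentation of arbitrary colimits as coequalisers of coproducts. The substance of the lemma is therefore entirely concentrated in the construction of coequalisers.

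For the coequaliser of a parallel pair $f,g \colon A \to B$ in $\Dcpo$, I would view the underlying poset of $B$ as a presentation in the sense of Townsend: a preorder equipped with a distinguished collection of formal directed joins (namely the directed suprema that already exist in $B$, which must be preserved by any cocone), to which we freely adjoin the additional equations $f(a) = g(a)$ for every $a : A$. The generalized coverage theorem then supplies a universal dcpo $Q$ together with a Scott--continuous map $q \colon B \to Q$ coequalising $f$ and $g$, and the universal property of the coequaliser in $\Dcpo$ transports directly from the universal property of this presentation — any Scott--continuous $h \colon B \to C$ equalising $f$ and $g$ factors uniquely through $q$ because it supplies precisely a morphism of presentations into $C$, and $Q$ is free on the presentation.

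The main obstacle, which is exactly what Townsend's theorem exists to overcome, is that in constructive mathematics the naive set-theoretic quotient of a dcpo by the smallest equivalence relation identifying $f(a)$ with $g(a)$ need not itself support directed joins compatibly with the quotient map. Directed subsets of the quotient may fail to have suprema, and those they do possess need not be preserved by the quotient map from $B$; and unlike the classical setting there is no transfinite saturation available to close the presentation up into a dcpo, since the solution set condition required by the adjoint functor theorem is not known to hold constructively. By working with coverage data intrinsically rather than trying to construct the quotient pointwise, Townsend bypasses this difficulty, so that the present lemma reduces to citing his theorem once the presentation data is set up as above; cocompleteness then follows formally.
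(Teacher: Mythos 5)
Your proof is correct and follows essentially the same route as the paper: both reduce the whole lemma to Townsend's generalized coverage theorem for the construction of coequalisers, with coproducts given by disjoint unions and general colimits obtained formally. The only difference is one of phrasing --- the paper describes Townsend's construction as a coequaliser computed in the enveloping sup-lattices followed by an image factorisation onto the smallest subdcpo containing the image of $B$, whereas you describe it via dcpo presentations (generators, order, and formal directed joins), which the paper itself identifies, immediately after the proof, as the more explicit formulation of the same argument.
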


  \begin{proof}
    Townsend~\cite{townsend:1996} has shown that the coequaliser of dcpos can be computed in their enveloping sup-lattices and then extracted by means of an image factorization that isolates the smallest subdcpo of the coequaliser sup-lattice containing the original dcpo that we wished to quotient.\qed
  \end{proof}

  The argument of \opcit is a more conceptual version of the explicit construction of dcpo quotients in terms of dcpo presentations~\cite{jung-moshier-vickers:2008}, or the even more explicit constructions of Fiech~\cite{fiech:1996} and Goubault-Larrecq~\cite{goubault-larrecq:blog:dcpo-colimits}.

\end{xsect}
 
\end{xsect}
\begin{xsect}{The lifting monad and its algebras}

  \begin{xsect}{The partial map classifier}

    In this section, we shall study the structure of \emph{partial maps} of dcpos in terms of 2-category--theoretic universal properties.

    \begin{definition}
      A \demph{partial map} from $A$ to $B$ is given by a span $A\hookleftarrow U\rightarrow B$ in which $U\hookrightarrow A$ is a Scott--open immersion. An \emph{inequality} from $A\hookleftarrow U\rightarrow B$ to $A\hookleftarrow U'\rightarrow B$ is given by an embedding $U\hookrightarrow U'$ making both triangles below commute:
      \[
        \begin{tikzpicture}[diagram]
          \node (n) {$U$};
          \node (w) [below left = of n] {$A$};
          \node (e) [below right = of n] {$B$};
          \node (s) [below right = of w] {$U'$};
          \draw[embedding] (n) to (w);
          \draw[embedding] (s) to (w);
          \draw[->] (n) to (e);
          \draw[->] (s) to (e);
          \draw[embedding] (n) to (s);
        \end{tikzpicture}
      \]
    \end{definition}

    \begin{observation}
      The partial order of partial maps from $A$ to $B$ is \emph{precisely} the (posetal) category $\Dcpo^\top\prn{A,B}$ of nondeterministic maps from $A$ to $B$ with coefficients in the universal Scott--open immersion $\top\colon \One\hookrightarrow\Sigma$.
    \end{observation}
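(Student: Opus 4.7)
The plan is to unpack both sides and match them term by term. On the nondeterministic-maps side, an object of $\Dcpo^\top\prn{A,B}$ is, by definition, a 1-cell $u\colon A\to\Sigma$ together with a further 1-cell $e\colon u^*\One\to B$. The universal property of $\top$ recorded in \cref{lem:universal-open-immersion} establishes a bijection between morphisms $u\colon A\to\Sigma$ and Scott--open immersions $U\hookrightarrow A$ via $u\mapsto u^*\One$; under this bijection $e$ becomes a Scott--continuous map $U\to B$, so that $(u,e)$ is exactly the data of a partial map $A\hookleftarrow U\to B$.

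For the order structure, I would unpack a morphism $(\alpha,\beta)\colon(u,e)\to(u',e')$ in $\Dcpo^\top\prn{A,B}$ using the poset enrichment of $\Dcpo$ (\cref{cor:path-enrichment}). The 2-cell $\alpha\colon u\to u'$ is the inequality $u\sqsubseteq u'$; because joins in $\Omega$ are computed by existential quantification, this amounts pointwise to the implication $u(x)=\top\Rightarrow u'(x)=\top$, \ie to a subspace inclusion $U\subseteq U'$, and the resulting embedding $U\hookrightarrow U'$ is automatically compatible with the inclusions into $A$ --- this is the commutation of the first of the two triangles in the definition of inequality between partial maps. The associated map $\alpha^*E\colon u^*\One\to u'^*\One$ furnished by \cref{con:opfib-lift} is precisely this embedding $U\hookrightarrow U'$, so the remaining 2-cell $\beta\colon e\sqsubseteq e'\circ\alpha^*E$ records the commutation of the second triangle $U\to B$. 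Since there is at most one such morphism between any two objects in $\Dcpo^\top\prn{A,B}$, this category is indeed a poset, and it coincides with the poset of partial maps.

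The only obstacle is careful bookkeeping: one must trace the 2-cells through the universal property of $\top$ and through the fiber--lifting construction of \cref{con:opfib-lift} specialised to the opfibration $\top\colon\One\hookrightarrow\Sigma$, where the construction trivialises because the fibers are either empty or singleton. With this verified, the correspondence of objects and morphisms is forced, and the observation follows at once.
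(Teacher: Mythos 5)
Your proposal is correct and is essentially the paper's argument written out in full: the paper's proof is the single sentence that the observation follows immediately from the universal property of $\top\colon\One\hookrightarrow\Sigma$ as the universal Scott--open immersion, which is exactly the correspondence you unpack for objects and then trace through the 2-cells for the order. The one caveat is that the 2-cell $\beta$ furnishes only the inequality $e\sqsubseteq e'\circ\alpha^*\One$ rather than the strict commutation of the second triangle in the paper's displayed definition of an inequality of partial maps; this laxness is present in the paper's own definitions (and is in fact the order required for \cref{lem:lifting-is-partial-product}), so it is not a defect of your argument, but you should not describe $\beta$ as recording a commutation.
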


    \begin{proof}
      This follows immediately from the universal property of $\top\colon\One\hookrightarrow\Sigma$ as the universal Scott--open immersion (\cref{lem:top-is-cocart-fib}).\qed
    \end{proof}

    Thus the appropriate enriched / 2-categorical universal property for \emph{classifying} partial maps is given by partial products (\cref{def:partial-product}). We will now give an explicit description of the classification of partial maps into $B$ by the partial product $\mathcal{P}_\bullet\prn{\top,B}$.

    \begin{construction}[The lifting operation on dcpos]\label{def:lifting}
      For a dcpo $A$, the \demph{lifted dcpo} $LA$ has the base of the \emph{partial map classifier} $LA :\equiv \Sum{\phi:\Omega}{A^\phi}$ as its underlying set, with the following partial order:
      \begin{align*}
        \prn{\phi,u} \sqsubseteq_{LA} \prn{\psi,v}
         & \Longleftrightarrow
        \Forall{x:\phi}
        \Exists{y:\psi}
        ux \sqsubseteq vy
        \\
         & \Longleftrightarrow
        \prn{\phi\sqsubseteq_\Sigma \psi}
        \land
        \Forall{x:\phi, y:\psi}
        ux \sqsubseteq vy
      \end{align*}

      If we write $\eta\colon A\hookrightarrow LA$ for the unit map sending $a$ to $\prn{\top,\Lam{x}{a}}$, then we see that we also have the following logical equivalence:
      \[
        \prn{\phi,u} \sqsubseteq_{LA} \prn{\psi,v}
        \Longleftrightarrow
        \Forall{a:A}
        a\in \eta\Inv u
        \to
        \Exists{b : B}
        b \in \eta\Inv v
      \]

      It is not difficult to show that if $A$ is directed-complete, then so is $LA$; suprema are computed so that the (clearly monotone) projection $\pi_1\colon LA \to \Sigma$ is Scott-continuous and so a morphism of dcpos.
    \end{construction}

    \begin{theorem}\label{lem:lifting-is-partial-product}
      Each lifted dcpo $LB$ is the partial product of $\top\colon\One\hookrightarrow\Sigma$ with $B$.
    \end{theorem}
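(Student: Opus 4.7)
The strategy is to unfold the universal property of a partial product and match it up with the concrete formula of \cref{def:lifting}. By the universal property of $\top\colon\One\hookrightarrow\Sigma$ as the universal Scott--open immersion (\cref{lem:universal-open-immersion}), for any dcpo $C$ a map $u\colon C\to\Sigma$ is the same as a Scott--open subspace $U\hookrightarrow C$ obtained as the pullback $u^*\One$. Consequently, a nondeterministic map from $C$ to $B$ with coefficients in $\top$ unfolds to a pair $\prn{U,e}$ where $U\hookrightarrow C$ is Scott--open and $e\colon U\to B$ is Scott--continuous, \ie to an ordinary partial map.

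The plan is then to exhibit a pseudonatural equivalence $\Dcpo\prn{C,LB} \simeq \Dcpo^{\top}\prn{C,B}$. In one direction, given $f\colon C\to LB$, I set $u_f :\equiv \pi_1\circ f\colon C\to\Sigma$ (which is Scott--continuous by the construction of suprema in \cref{def:lifting}) and define $e_f\colon u_f^*\One\to B$ by $\prn{c,x}\mapsto \pi_2\prn{f c}\prn{x}$, where $x$ is the witness of $\pi_1\prn{fc}=\top$. In the other direction, given a partial map $\prn{U,e}$ with classifying map $u\colon C\to\Sigma$, I define $\hat f\colon C\to LB$ by $c\mapsto \prn{u c, \Lam{x}{e\prn{c,x}}}$; the fact that this is monotone and preserves directed suprema reduces directly to the defining order on $LB$ from \cref{def:lifting} and the Scott--continuity of $u$ and $e$. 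These two assignments are mutually inverse essentially by $\beta\eta$--conversion at the dependent sum, and both assignments are strictly natural in $C$ by construction.

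Next, I must verify that this bijection respects the 2-dimensional structure: an inequality $f\sqsubseteq g\colon C\to LB$ must correspond exactly to a morphism of nondeterministic maps $\prn{u_f,e_f}\to\prn{u_g,e_g}$ in the sense requiring a 2-cell $\alpha\colon u_f\to u_g$ together with a further 2-cell $e_f\to \alpha^*\One\ast e_g$ as in \cref{con:opfib-lift}. Unfolding the pointwise order in $\Dcpo\prn{C,LB}$ using the characterisation in \cref{def:lifting}, $f\sqsubseteq g$ says precisely that $\pi_1\circ f\sqsubseteq\pi_1\circ g$ in $\Sigma$ (giving the 2-cell $\alpha$) and that $e_f$ is pointwise below $e_g$ restricted along the induced embedding $u_f^*\One\hookrightarrow u_g^*\One$ (giving the required 2-cell on the fibres). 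Because $\Sigma$ has at most one 2-cell between any two parallel 1-cells and $\top$ is an opfibration with essentially trivial reindexing (\cref{lem:top-is-cocart-fib}), the description of $\alpha^*\One$ from \cref{con:opfib-lift} reduces exactly to this restriction.

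The main obstacle is bookkeeping in the 2-cell check, because it requires translating between the pointwise ordering on $LB$, the characterisation of $\alpha^*E$ in \cref{con:opfib-lift}, and the comma description of 2-cells in $\Sigma$ coming from $\Omega$. Once this translation is in place, pseudonaturality in $C$ is immediate from the naturality of all constructions in sight, and the representability of $\Dcpo^{\top}\prn{-,B}$ by $LB$ follows.
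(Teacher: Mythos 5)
Your proposal is correct and follows essentially the same route as the paper: both exhibit the order-isomorphism $\Dcpo\prn{C,LB}\cong\Dcpo^{\top}\prn{C,B}$ by sending $f$ to the partial map classified by $\pi_1\circ f$ with evaluation $\pi_2\circ f$, and conversely sending a partial map $\prn{U,e}$ with classifying map $u$ to $c\mapsto\prn{uc,\Lam{x}{e\prn{c,x}}}$. Your verification of the 2-cell correspondence is more explicit than the paper's brief ``monotonicity is immediate,'' but the underlying argument is the same.
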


    \begin{proof}
      We must construct an isomorphism of posets from $\Dcpo\prn{A,LB}$ to the poset $\Dcpo^\top\prn{A,B}$ of partial maps from $A$ to $B$. Given $f\colon A\to LB$, we choose the following partial map from $A$ to $B$:
      \begin{equation*}
        \begin{tikzpicture}[diagram,baseline=(current bounding box.center)]
          \SpliceDiagramSquare<sq/>{
            ne = \One,
            se = \Sigma,
            nw = \Compr{x:A}{\pi\prn{fx}=\top},
            sw = A,
            south = \pi\circ f,
            east = \top,
            east/style = embedding,
            west/style = embedding,
            nw/style = pullback,
            width = 2.5cm,
          }
          \node[left = 3cm of sq/nw] (nw) {$B$};
          \draw[->] (sq/nw) to node[above] {$\pi_2\circ f$} (nw);
        \end{tikzpicture}
      \end{equation*}

      Monotonicity is immediate. Conversely, we consider an arbitrary partial map:
      \begin{equation*}
        \begin{tikzpicture}[diagram,baseline=(current bounding box.center)]
          \SpliceDiagramSquare<sq/>{
            ne = \One,
            se = \Sigma,
            nw = U,
            sw = A,
            south = p,
            east = \top,
            east/style = embedding,
            west/style = embedding,
            nw/style = pullback,
          }
          \node[left = of sq/nw] (nw) {$B$};
          \draw[->] (sq/nw) to node[above] {$e$} (nw);
        \end{tikzpicture}
      \end{equation*}

      The above corresponds to the map $A\to LB$ sending $x:A$ to $\prn{px, \lambda z\mathpunct{.} \prn{x,z}}$.\qed
    \end{proof}

    \begin{corollary}
      Let $A$ be a dcpo; then the evaluation map $e\colon U\to A$ in the universal nondeterministic map with coefficients in $\top\colon\One\hookrightarrow\Sigma$ is an isomorphism.
      \begin{equation*}
        \begin{tikzpicture}[diagram,baseline=(current bounding box.center)]
          \SpliceDiagramSquare<sq/>{
            ne = \One,
            se = \Sigma,
            nw = U,
            sw = \mathcal{P}_\bullet\prn{\top,A},
            south = \pi,
            east = \top,
            east/style = embedding,
            west/style = embedding,
            nw/style = pullback,
          }
          \node[left = of sq/nw] (nw) {$A$};
          \draw[->] (sq/nw) to node[above] {$e$} (nw);
        \end{tikzpicture}
      \end{equation*}
    \end{corollary}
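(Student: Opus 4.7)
The plan is to unwind the universal property of $\mathcal{P}_\bullet(\top,A)$ using the concrete description from \cref{lem:lifting-is-partial-product}. Because $\mathcal{P}_\bullet(\top,A)$ represents the pseudofunctor $\Dcpo^\top(-,A)$, the universal nondeterministic map out of $\mathcal{P}_\bullet(\top,A)$ is the one corresponding to $\mathrm{id}_{\mathcal{P}_\bullet(\top,A)}$ under the natural equivalence $\Dcpo(-,\mathcal{P}_\bullet(\top,A)) \simeq \Dcpo^\top(-,A)$. I would therefore instantiate the bijection from the proof of \cref{lem:lifting-is-partial-product} at the identity on $LA$ and read off the resulting partial map.

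Unwinding the formula, the universal nondeterministic map $U \hookrightarrow LA \to A$ has
\[
  U = \Compr{(\phi,u):LA}{\pi_1(\phi,u) = \top} = \Compr{(\phi,u):LA}{\phi = \top},
\]
with classifier $\pi_1 \colon LA \to \Sigma$ and evaluation $e = \pi_2$ restricted to $U$, that is, $e(\top,u) = u(\ast)$.

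To show $e$ is an isomorphism, I would exhibit an explicit two-sided inverse. The map $\bar{\eta}_A \colon A \to U$ sending $a \mapsto (\top,\Lam{x}{a})$ (\ie the corestriction of $\eta_A$ through $U \hookrightarrow LA$) is evidently Scott--continuous since $\eta_A$ is. On one side, $e \circ \bar{\eta}_A = \mathrm{id}_A$ is immediate from the definitions. On the other side, for $(\top,u) \in U$, we have $\bar{\eta}_A(e(\top,u)) = (\top, \Lam{x}{u(\ast)})$; since the domain $\top \cong \One$ of $u$ is a singleton, the function $\Lam{x}{u(\ast)}$ is equal to $u$ itself, so $\bar{\eta}_A \circ e = \mathrm{id}_U$.

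There is no real obstacle here: the entire argument is a bookkeeping exercise that traces the identity on $LA$ through the bijection established in \cref{lem:lifting-is-partial-product}, combined with the observation that the fibre of $\pi_1 \colon LA \to \Sigma$ over $\top$ is canonically $A$ via the unit $\eta_A$. The only point that deserves explicit mention is that the equality $\Lam{x}{u(\ast)} = u$ uses extensionality together with the fact that the domain of $u$ is (isomorphic to) the terminal dcpo, which is why the argument is not circular and does not secretly require anything beyond what was already set up in \cref{def:lifting}.
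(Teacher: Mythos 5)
Your proposal is correct and follows the only natural route: the paper states this corollary without any proof at all, treating it as an immediate unwinding of \cref{lem:lifting-is-partial-product}, which is exactly what you carry out. Your explicit identification of the fibre of $\pi_1$ over $\top$ with $A$ via the corestriction of $\eta_A$, including the remark that $A^\top\cong A$ because $\top$ has a unique proof, is precisely the bookkeeping the paper leaves implicit.
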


  \end{xsect}

  \begin{xsect}{Geometry of the partial map classifier}

    In \cref{sec:open-subsets,sec:sierpinski-dcpo} we have seen that the classifier of Scott--open subsets has an additional left-handed universal property as a 2-categorical colimit: the Sierpi\'nski space. In this section, we will upgrade this result to see that the partial map classifier of a given dcpo $A$ has an additional left-handed universal property as the Sierpi\'nski \emph{cone} of $A$. From this, we will obtain the most important reasoning principle for the lifting doctrine in constructive domain theory, namely our \cref{cor:joint-epi,cor:lax-epi}.

    \begin{theorem}[Lifting = Sierpi\'nski cone]\label{lem:lifting-is-scone}
      For any dcpo $A$, the following lax square involving the lifting operation is a co-comma square:
      \begin{equation*}
        \begin{tikzpicture}[diagram,baseline=(current bounding box.center)]
          \SpliceDiagramSquare{
            nw = A,
            ne = A,
            sw = \One,
            east = \eta_A,
            south = \bot,
            se = LA,
            west = {!}_A,
            east/style = embedding,
            south/style = embedding,
            north/style = {double},
          }
          \node[between = sw and ne] {\rotatebox{45}{$\Rightarrow$}};
        \end{tikzpicture}
      \end{equation*}

      In other words, the lifted dcpo $LA$ is in fact the \emph{Sierpi\'nski cone} of $A$ in $\Dcpo$.
    \end{theorem}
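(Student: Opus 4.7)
The plan is to generalise the proof of \cref{lem:subobject-classifier-is-sigma} from the case $A=\One$ to arbitrary $A$: what made that argument work was that every element of $\Omega$ is the supremum of a directed set consisting of $\bot$ together with (a copy of) $\top$ indexed over the proposition itself, and this feature is enjoyed uniformly by every element of the partial map classifier $LA=\Sum{\phi:\Omega}{A^\phi}$.

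Concretely, I would fix an arbitrary lax cocone, \ie a dcpo $C$ together with morphisms $c_0\colon\One\to C$ and $c_1\colon A\to C$ satisfying $c_0\circ{!_A}\sqsubseteq c_1$. I would then define the comparison map $h\colon LA\to C$ by the formula
\begin{equation*}
  h\prn{\phi,u} :\equiv \Lub{\One+\phi}\brk{c_0\mid c_1\circ u},
\end{equation*}
observing that the set being joined is directed precisely because $c_0\sqsubseteq c_1\prn{u\prn{x}}$ for any $x:\phi$, which is exactly the hypothesis $c_0\circ{!_A}\sqsubseteq c_1$ evaluated at $u\prn{x}$. The boundary conditions $h\circ\bot = c_0$ and $h\circ\eta_A = c_1$ are immediate from the formula, since $\eta_A\prn{a}=\prn{\top,\lambda\_.a}$ collapses the join to the single element $c_1\prn{a}$, while $\bot_{LA}=\prn{\bot,!}$ collapses it to $c_0$.

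For uniqueness, I would observe that every $\prn{\phi,u}\in LA$ is the supremum of the directed subset $\brc{\bot_{LA}}\cup\Compr{\eta_A\prn{u\prn{x}}}{x:\phi}$, so any Scott--continuous map factoring $c_0$ and $c_1$ in the required sense must be given by the same formula as $h$; this parallels the uniqueness argument in \cref{lem:subobject-classifier-is-sigma}. The main technical obstacle, as in that earlier proof, is verifying Scott--continuity of $h$; the key point is that the formula computes a supremum in $C$ of a family indexed by the disjoint union $\One+\phi$, and directed suprema in $LA$ (as computed via \cref{def:lifting}) distribute over this indexing in the expected way --- with the $\Sigma$-component handled by the fact that $\pi_1\colon LA\to\Sigma$ preserves directed suprema, and the $A$-component handled pointwise. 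Once this is checked, both existence and uniqueness of the mediating map are established and the cocomma universal property is satisfied.
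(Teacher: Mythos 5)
Your proposal is correct and follows essentially the same route as the paper's proof: the same mediating map $h$ (your $\Lub{\One+\phi}\brk{c_0\mid c_1\circ u}$ is exactly the paper's supremum of $\brc{c_0}\cup\Compr{c_1x}{u=\eta_Ax}$ in coordinates), the same directedness observation, and the same uniqueness argument via writing each element of $LA$ as the directed supremum of $\bot$ together with its $\eta_A$-approximations. The only place you are sketchier than the paper is the Scott-continuity of $h$, which the paper verifies by a short explicit computation exchanging the two directed suprema, but the ingredients you name suffice for that check.
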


    \begin{proof}
      Consider an arbitrary lax square in the following configuration:
      \begin{equation*}
        \begin{tikzpicture}[diagram,baseline=(current bounding box.center)]
          \SpliceDiagramSquare{
            nw = A,
            ne = A,
            sw = \One,
            east = c_1,
            south = c_0,
            se = C,
            north/style = {double},
          }
          \node[between = sw and ne] {\rotatebox{45}{$\Rightarrow$}};
        \end{tikzpicture}
      \end{equation*}

      The universal map $h\colon LA\to C$ factoring $c_0$ through $\bot$ and $c_1$ through $\eta_A$ is defined so as to send $u : LA$ to the supremum of the union of $\brc{c_0}$ with $\Compr{c_1 x}{u = \eta_A x}$. This set is evidently directed, and so each $hu$ is well-defined; to see that the assignment $u\mapsto hu$ is continuous, we fix a directed subset $V\subseteq LA$:
      \begin{align*}
        h\DLub{}V
         & =
        \DLub{}\prn{
          \brc{c_0}\cup \Compr{c_1 x}{\DLub{}V=\eta_A x}
        }
        \\
         & =
        \DLub{}\prn{
          \brc{c_0}\cup \Compr{c_1 x}{\eta_A x\in V}
        }
        \\
         & =
        \DLub{u\in V}\prn{
          \brc{c_0}\cup \Compr{c_1 x}{u=\eta_A x}
        }
        \\
         & =
        \DLub{u\in V} hu
      \end{align*}

      Lastly, we must check that $h\colon LA\to C$ is unique with this property. We will show that any two $h,h'\colon LA\to C$ factoring our lax square in the appropriate sense are equal, fixing $u: LA$.
      \begin{align*}
        hu
         & =
        h \DLub{}\prn{
          \brc{\bot} \cup
          \Compr{\eta_A x}{u = \eta_A x}
        }
        \\
         & =
        \DLub{}\prn{
          \brc{h\bot}
          \cup
          \Compr{h\prn{\eta_A x}}{u = \eta_A x}
        }
        \\
         & =
        \DLub{}\prn{
          \brc{h'\bot}
          \cup
          \Compr{h'\prn{\eta_A x}}{u=\eta_A x}
        }
        \\
         & =
        h' \DLub{}\prn{
          \brc{\bot} \cup
          \Compr{\eta_A x}{u = \eta_A x}
        }
        \\
         & = h'u
      \end{align*}
      Thus $h = h'$.
      \qed
    \end{proof}

    From the universal property of $LA$ as the Sierpi\'nski cone of $A$, we can deduce the following important reasoning principle.

    \begin{corollary}\label{cor:joint-epi}
      For any dcpo $A$, the two embeddings $\bot \colon \One\hookrightarrow LA$ and $\eta_A\colon A \hookrightarrow LA$ are jointly epimorphic; as such, we have an epimorphic embedding $\brk{\bot \mid \eta_A} \colon \One+A\hooktwoheadrightarrow LA$.
    \end{corollary}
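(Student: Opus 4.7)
The plan is to read joint epimorphism directly off the cocomma universal property established in \cref{lem:lifting-is-scone}. I will start with a parallel pair $f,g \colon LA \to C$ satisfying $f \circ \bot = g \circ \bot$ and $f \circ \eta_A = g \circ \eta_A$, and aim to deduce $f = g$. The key observation will be that both $f$ and $g$ exhibit $C$ as a recipient of the \emph{same} cocomma datum: the shared legs will be $c_0 := f\bot = g\bot$ and $c_1 := f\eta_A = g\eta_A$, while the required 2-cell $c_0 \circ {!}_A \sqsubseteq c_1$ will arise \emph{a priori} in two guises, as the whiskerings $f \ast \alpha$ and $g \ast \alpha$, where $\alpha$ denotes the generic 2-cell of the cocomma. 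Local posetality of $\Dcpo$ --- at most one 2-cell between any pair of parallel 1-cells --- will collapse these two whiskerings into the same 2-cell, after which the uniqueness clause of \cref{lem:lifting-is-scone} will force $f = g$.

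For the second claim, I will translate joint epimorphism of $\bot$ and $\eta_A$ through the universal property of the coproduct $\One + A$: a morphism out of $\One + A$ is precisely a pair of morphisms out of $\One$ and $A$, so equality of precompositions with $\brk{\bot \mid \eta_A}$ is equivalent to componentwise agreement on $\bot$ and $\eta_A$, whence joint epimorphism upgrades to ordinary epimorphism of the copairing. To check $\brk{\bot \mid \eta_A}$ is furthermore a monomorphism, I would use the first projection $\pi_1 \colon LA \to \Sigma$ to separate the two summands: every point in the image of $\bot$ has first projection $\bot_\Sigma$, every point in the image of $\eta_A$ has first projection $\top_\Sigma$, and $\bot_\Sigma$ is distinct from $\top_\Sigma$ in any non-degenerate topos. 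Injectivity within each summand is immediate from \cref{def:lifting}.

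The main conceptual point, and the only place one might stumble, is noticing that the local posetality of $\Dcpo$ is doing non-trivial work: in a genuinely 2-categorical setting one would have to independently check the equality of the two candidate whiskering 2-cells $f\ast\alpha$ and $g\ast\alpha$ before invoking cocomma uniqueness, but here that check is automatic. Everything else is book-keeping.
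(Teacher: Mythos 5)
Your argument is correct and is essentially the paper's own proof: the paper likewise reads joint epimorphism directly off the uniqueness clause of the Sierpi\'nski-cone universal property from \cref{lem:lifting-is-scone}, with local posetality silently discharging the compatibility of the two whiskered 2-cells that you make explicit. Your extra verification that $\brk{\bot\mid\eta_A}$ is monic is a detail the paper leaves implicit, and your treatment of it (separating summands via $\pi_1\colon LA\to\Sigma$ and injectivity of $\eta_A$) is sound.
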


    \begin{proof}
      This is an immediate consequence of \cref{lem:lifting-is-scone}: because $LA$ is the Sierpi\'nski cone of $A$, equality of maps $LA\to C$ can be checked by restriction along the embeddings $\bot \colon \One\hookrightarrow LA$ and $\eta_A\colon A \hookrightarrow LA$.\qed
    \end{proof}

    Obviously, the category of dcpos is not balanced or else we would have $\One+A\cong LA$.
    It was Fiore~\cite{fiore:1995} who first argued for the importance of \cref{cor:joint-epi} for the general axiomatics of lifting monads as Kock--Z\"oberlein doctrines, \ie \emph{lax idempotent 2-monads}. In this paper, we consider a stronger \emph{enriched} version of this statement.

    \begin{corollary}\label{cor:lax-epi}
      For any dcpo $A$, the embedding $\brk{\bot \mid \eta_A} \colon \One+A\hooktwoheadrightarrow LA$ is \emph{lax epimorphic} in the 2-category of dcpos, so that for any dcpo $C$ the induced restriction map
      $\Dcpo\prn{\brk{\bot \mid \eta_A},C}\colon \Dcpo\prn{LA,C}\to \Dcpo\prn{\One+A,C}$ is an \emph{order-embedding}.
    \end{corollary}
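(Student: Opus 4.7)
The statement amounts to showing that restriction $\Dcpo(LA, C) \to \Dcpo(\One + A, C)$ reflects the pointwise order, since preservation of order is automatic for any functor into a locally posetal 2-category. So suppose $f, g \colon LA \to C$ satisfy $f \circ \brk{\bot \mid \eta_A} \sqsubseteq g \circ \brk{\bot \mid \eta_A}$; my goal is to produce $f \sqsubseteq g$.

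My plan is to translate inequalities into paths via \cref{cor:path-enrichment} and then build the required path using the 2-categorical universal property of lifting. The hypothesis gives a path $\beta \colon \Sigma \times (\One + A) \to C$ from $f\circ\brk{\bot\mid\eta_A}$ to $g\circ\brk{\bot\mid\eta_A}$; decomposing along the product-sum distributivity of $\Dcpo$ yields two component paths $\beta_\bot \colon \Sigma \times \One \to C$ realising $f\circ\bot\sqsubseteq g\circ\bot$ and $\beta_\eta \colon \Sigma \times A \to C$ realising $f\circ\eta_A\sqsubseteq g\circ\eta_A$. What I seek is a path $\gamma \colon \Sigma \times LA \to C$ from $f$ to $g$, since its existence immediately supplies the required inequality by \cref{cor:path-enrichment}.

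By \cref{lem:prod-preserves-cocomma}, applying $\Sigma \times -$ to the Sierpi\'nski cone square of \cref{lem:lifting-is-scone} produces a cocomma exhibiting $\Sigma \times LA$ as the Sierpi\'nski cone of $\Sigma \times A$ over $\Sigma \times \One$. Consequently, the pair $(\beta_\bot, \beta_\eta)$ induces $\gamma$ via the cocomma universal property, provided the 2-cell condition $\beta_\bot \circ (\Sigma \times {!}_A) \sqsubseteq \beta_\eta$ is verified. This condition is a pointwise inequality $\beta_\bot(\sigma, \ast) \sqsubseteq \beta_\eta(\sigma, a)$ in $C$ for each $\sigma \in \Sigma$ and $a \in A$; by the explicit formula for paths derived in the proof of \cref{lem:subobject-classifier-is-sigma}, this reduces in turn to the pair of inequalities $f(\bot) \sqsubseteq f(\eta_A a)$ and $g(\bot) \sqsubseteq g(\eta_A a)$, each of which follows by monotonicity of $f$ and $g$ from the inequality $\bot \sqsubseteq \eta_A(a)$ that holds in $LA$ by \cref{lem:lifting-is-scone}.

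The main obstacle is essentially organisational rather than technical: once the translation between order and paths is in hand, the entire argument is driven by the preservation of the cocomma square under $\Sigma \times -$, with the 2-cell verification reducing to pointwise monotonicity. The only subtlety worth checking is that the $\gamma$ produced does indeed witness $f \sqsubseteq g$ rather than some other pair, but this is automatic: the restrictions of $\gamma$ along $\Sigma \times \bot$ and $\Sigma \times \eta_A$ agree by construction with $\beta_\bot$ and $\beta_\eta$, whose endpoints at $\bot$ and $\top$ in $\Sigma$ are precisely the restrictions of $f$ and $g$ along $\brk{\bot \mid \eta_A}$, and by the joint epimorphism of \cref{cor:joint-epi} the endpoints at $\bot, \top$ of $\gamma$ are therefore $f$ and $g$ themselves.
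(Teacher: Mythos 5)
Your proof is correct and is essentially the paper's own argument unfolded: the paper derives the corollary in one line from \cref{lem:dcpo-colimits-enriched} (itself proved via \cref{cor:path-enrichment} and cocontinuity of $\Sigma\times-$) together with \cref{cor:joint-epi}, and your translation of inequalities into paths plus the preservation of the cocomma square under $\Sigma\times-$ is precisely the content of those citations specialised to the Sierpi\'nski cone. The pointwise verification of the 2-cell condition and the endpoint check via \cref{cor:joint-epi} are both sound, so there is nothing to object to.
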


    \begin{proof}
      This is a consequence of \cref{lem:dcpo-colimits-enriched,cor:joint-epi}.\qed
    \end{proof}

  \end{xsect}

  \begin{xsect}{Lifting as a 2-monad}

    It is not difficult to see that the lifting operation on dcpos is functorial and, indeed, a monad; on point-sets, these operations are the same as those of the (discrete) partial map classifier on sets --- as the functorial action sends continuous maps to continuous maps, and both the unit and multiplication can be seen to be continuous. Moreover, the functorial action is in fact \emph{monotone} in hom posets. Therefore:

    \begin{lemma}[Enrichment]
      Lifting gives rise to a 2-monad $\LL = \prn{L,\eta,\mu}$ on $\Dcpo$.
    \end{lemma}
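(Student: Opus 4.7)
The plan is to verify the three pieces of structure---the 2-functoriality of $L$, and the naturality and enrichment of $\eta$ and $\mu$---by a combination of direct pointwise definitions using the presentation $LA = \sum_{\phi:\Omega} A^\phi$ and appeals to the universal properties of \cref{lem:lifting-is-partial-product,lem:lifting-is-scone}. The key observation is that the underlying sets of $LA$, $Lf$, $\eta_A$, $\mu_A$ coincide with the data of the discrete partial map classifier on sets, which is already known to form a monad in the ambient topos; hence the monad equations hold automatically at the underlying-set level, and the only additional work is to verify that every arrow in sight is Scott--continuous and that $L$ is monotone on hom-posets.

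Concretely, I would first define $Lf\colon LA\to LB$ by $Lf\prn{\phi,u} = \prn{\phi, f\circ u}$, the unit $\eta_A\colon A\hookrightarrow LA$ by $\eta_A a = \prn{\top,\Lam{x}{a}}$, and the multiplication $\mu_A\colon LLA\to LA$ by sending $\prn{\psi, y\mapsto\prn{\phi_y,u_y}}$ to $\prn{\Sum{y:\psi}{\phi_y},\; \prn{y,x}\mapsto u_y\, x}$; equivalently, by the universal property of \cref{lem:lifting-is-partial-product}, each of these arises as the unique morphism classifying the evident partial map into the target. Scott--continuity of $Lf$ follows immediately because suprema in $LA$ are computed so that $\pi_1\colon LA\to\Sigma$ is Scott--continuous and the second component is evaluated pointwise in $A$; Scott--continuity of $\eta_A$ is immediate, and Scott--continuity of $\mu_A$ reduces to the observation that $\Sum{y:\psi}{\phi_y}$ commutes with directed suprema of families of propositions (joins in $\Omega$ being given by existential quantification).

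Next, I would verify the enrichment: if $f\sqsubseteq g\colon A\to B$, then for any $\prn{\phi,u}:LA$ and any $x,y:\phi$ we have $f\prn{u x}\sqsubseteq g\prn{u y}$ (combining monotonicity of $f\sqsubseteq g$ in the strong sense that $\phi$ is propositional so $x=y$), so $Lf\prn{\phi,u}\sqsubseteq_{LB}Lg\prn{\phi,u}$ by unwinding the definition of $\sqsubseteq_{LB}$ given in \cref{def:lifting}. This monotonicity, together with ordinary naturality of $\eta$ and $\mu$ (which is inherited from the underlying-set monad), upgrades $\eta$ and $\mu$ to 2-natural transformations automatically, because in a locally posetal 2-category the only 2-cells are inequalities and naturality on 2-cells reduces to monotonicity of the functors involved. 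Finally, the monad axioms $\mu\circ L\eta = \mathrm{id} = \mu\circ\eta L$ and $\mu\circ L\mu = \mu\circ \mu L$ can be checked either by transporting along \cref{cor:lax-epi} (so that it suffices to precompose with $\bot$ and $\eta_A$ and verify the equations in each case), or simply by noting that they hold on underlying sets and that the underlying-set functor $\Dcpo\to\mathbf{Set}$ is faithful.

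The main obstacle is bookkeeping rather than substance: one must take care that the ``inner proposition'' appearing in $\mu_A\prn{\psi,v}$, namely $\Sum{y:\psi}{\phi_y}$, genuinely yields a Scott--continuous first-projection after taking directed joins in $LLA$, because a directed family in $LLA$ is a family of partial maps whose first components form a directed family in $\Sigma$ \emph{and} whose second components cohere in a subtle way over the indexing proposition. This verification is entirely mechanical given that $\Omega$ is a sup-lattice and that $\Sigma$-types commute with directed joins of propositions, but it is the one place where the constructive setting requires slightly more care than the classical one.
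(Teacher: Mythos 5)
Your proposal is correct and takes essentially the same route as the paper: the paper likewise observes that the underlying operations are those of the set-level partial map classifier monad, reduces the work to checking Scott--continuity of $Lf$, $\eta$, $\mu$ and monotonicity of the hom-action $f\mapsto Lf$, and notes that 2-naturality is automatic in the locally posetal setting. You have simply spelled out the pointwise definitions and verifications that the paper leaves implicit.
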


    \begin{proof}
      This amounts to the fact that each functorial map taking $f\colon A \to B$ to $Lf\colon LA \to LB$ is \emph{monotone} as a function on hom posets. That the unit and multiplication are 2-natural is automatic in the locally posetal setting.\qed
    \end{proof}

    Essentially by definition, the Kleisli 2-category for $\LL$ is given by dcpos with \emph{partial} maps between them. The rest of this section is devoted to understanding the broader Eilenberg--Moore resolution of $\LL$, which extends beyond the free lifting algebras to arbitrary lifting algebras. We will show in \cref{sec:lift-alg-characterisation} that lifting algebras, pointed dcpos, and inductive partial orders give equivalent 2-categories; in \cref{sec:alg-cocomplete}, we will show that the category of lifting algebras is cocomplete.

    \begin{definition}
      We shall emphasise the property of dcpo morphism $f\colon UX \to YU$ tracking a morphism of $\LL$-algebras by calling it \demph{linear}.
    \end{definition}

    The following can be seen by unfolding definitions.
    \begin{observation}
      Each unit map $\eta_A \colon A\to LA$ is an order-embedding.
    \end{observation}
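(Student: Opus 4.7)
The plan is to argue directly from the explicit description of the order on $LA$ given in Construction~\ref{def:lifting}. Since $\eta_A$ sends $a:A$ to the pair $\prn{\top, \lambda x\mathpunct{.} a}$, both $\eta_A(a)$ and $\eta_A(b)$ have support $\top:\Sigma$, and the definition of $\sqsubseteq_{LA}$ specialises as follows:
\[
  \eta_A(a) \sqsubseteq_{LA} \eta_A(b)
  \;\Longleftrightarrow\;
  \prn{\top \sqsubseteq_\Sigma \top} \,\land\, \Forall{x:\top, y:\top} a \sqsubseteq_A b.
\]
The first conjunct is trivial, and since $\top$ is inhabited the second conjunct is logically equivalent to $a \sqsubseteq_A b$. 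So $\eta_A$ reflects the order, and combined with its monotonicity (already known since $\eta_A$ is a dcpo morphism) this gives the claimed order-embedding property.

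Alternatively, one may give a more conceptual argument by invoking \cref{lem:lifting-is-partial-product} together with \cref{diag:dcpo:pmc}: the unit $\eta_A\colon A\hookrightarrow LA$ is the pullback of the universal Scott--open immersion $\top\colon \One\hookrightarrow\Sigma$ along the classifying map $\pi\colon LA\to\Sigma$, so $\eta_A$ is itself a Scott--open immersion (this is already recorded as |east/style = embedding| in the pullback square of \cref{lem:F-conservative}). A Scott--open immersion is by definition an isomorphism onto a subdcpo, and subdcpos inherit their order from the ambient dcpo, so any such immersion is automatically an order-embedding.

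No step in this proof presents any real obstacle; the content is entirely in the unfolding of \cref{def:lifting}, and the whole verification can be rendered in two lines once the definitions are in place.
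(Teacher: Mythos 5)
Your first argument is exactly what the paper intends: the paper's entire justification is ``can be seen by unfolding definitions,'' and your unfolding of the order on $LA$ at elements of the form $(\top,\lambda x.\,a)$ is the correct and complete version of that. The alternative route via the partial-product pullback is also sound, but the direct computation is the same approach as the paper and suffices.
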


  \end{xsect}

  \begin{xsect}{Lifting as a Kock--Z\"oberlein doctrine}

    The lifting 2-monad is \emph{lax idempotent} and so gives rise to a Kock--Z\"oberlein doctrine on dcpos. We will see this doctrine takes the form of cocompletion under bottom elements, constructivising the classical viewpoint of dcpo lifting algebras as \emph{pointed} dcpos.

    \begin{lemma}\label{lem:lax-idempotent}
      The lifting 2-monad is lax idempotent:
      for any algebra $X\in \LiftAlg$, the structure map $\alpha_X \colon LUX \to UX$ is left adjoint to the unit $\eta_{UX} \colon UX \to LUX$ in $\Dcpo$.
    \end{lemma}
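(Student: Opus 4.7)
The plan is to verify the adjunction $\alpha_X \dashv \eta_{UX}$ directly by producing its unit and counit 2-cells in the hom-posets of $\Dcpo$. Because $\Dcpo$ is locally posetal, the triangle identities are automatic: once the unit and counit exist, the composites $\alpha_X \sqsubseteq \alpha_X \circ \eta_{UX} \circ \alpha_X \sqsubseteq \alpha_X$ and $\eta_{UX} \sqsubseteq \eta_{UX} \circ \alpha_X \circ \eta_{UX} \sqsubseteq \eta_{UX}$ collapse to identities by antisymmetry.

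The counit inequality $\alpha_X \circ \eta_{UX} \sqsubseteq \id_{UX}$ in fact holds as an equality: it is exactly the unit law of the lifting algebra structure on $X$. Therefore the substantive content of the claim is to produce the unit 2-cell $\id_{LUX} \sqsubseteq \eta_{UX} \circ \alpha_X$ between endomorphisms of $LUX$.

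To establish this inequality I would invoke \cref{cor:lax-epi}: the morphism $\brk{\bot \mid \eta_{UX}} \colon \One + UX \hooktwoheadrightarrow LUX$ is lax epimorphic, so the ordering of two parallel maps $LUX \to LUX$ can be checked after precomposition with the two components $\bot \colon \One \hookrightarrow LUX$ and $\eta_{UX} \colon UX \hookrightarrow LUX$ separately. Precomposing with $\eta_{UX}$ reduces the claim to $\eta_{UX} \sqsubseteq \eta_{UX} \circ \alpha_X \circ \eta_{UX}$, which collapses to an equality by the algebra unit law $\alpha_X \circ \eta_{UX} = \id_{UX}$. Precomposing with $\bot$ reduces the claim to $\bot \sqsubseteq \eta_{UX} \circ \alpha_X \circ \bot$ as morphisms $\One \to LUX$; this is immediate from \cref{def:lifting}, since $\bot = \prn{\bot_\Omega, !}$ is the minimum element of $LUX$ and is therefore dominated by every morphism $\One \to LUX$.

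The only potentially subtle point is recognising that \cref{cor:lax-epi} does all the two-dimensional work for us: it reduces what might otherwise look like a calculation about arbitrary elements of $LUX$ and the algebra multiplication $\mu$ to just these two generator inequalities, neither of which requires the multiplication axiom nor any pointwise reasoning beyond the leastness of $\bot$.
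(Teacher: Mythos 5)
Your proof is correct and follows essentially the same route as the paper: the counit is the algebra unit law, and the unit 2-cell $\Idn{LUX}\sqsubseteq\eta_{UX}\circ\alpha_X$ is checked via \cref{cor:lax-epi} on the two generators $\bot$ and $\eta_{UX}$, using leastness of $\bot$ and the unit law respectively. The additional observation that the triangle identities are automatic in a locally posetal 2-category is accurate and harmless, though the paper leaves it implicit.
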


    \begin{proof}
      The counit $\alpha_X\circ \eta_{UX} \sqsubseteq \Idn{UX}$ is automatic (and invertible) by the unit law for monad algebras. To exhibit the unit $\Idn{LUX}\sqsubseteq \eta_{UX}\circ \alpha_X$, it suffices by \cref{cor:lax-epi} to check both $\bot\sqsubseteq \eta_{UX} \alpha_X\bot$ and $\eta_{UX}\sqsubseteq \eta_{UX} \alpha_X \eta_{UX}$. The former is immediate and the latter holds by the unit law for monad algebras.\qed
    \end{proof}

    \begin{corollary}\label{cor:algebras-unique}
      There is at most one lifting algebra structure on a dcpo.
    \end{corollary}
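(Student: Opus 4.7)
The plan is to deduce uniqueness directly from \cref{lem:lax-idempotent} together with the poset-enrichment of $\Dcpo$. Suppose we are given a dcpo $A$ equipped with two lifting algebra structures, say $\alpha,\alpha'\colon LA\to A$, yielding algebras $X$ and $X'$ with $UX = UX' = A$. By \cref{lem:lax-idempotent} applied to each, both $\alpha$ and $\alpha'$ are left adjoint to the same unit $\eta_A\colon A\to LA$ in the 2-category $\Dcpo$.

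First, I would invoke the standard uniqueness of adjoints: if $\alpha\dashv \eta_A$ and $\alpha'\dashv\eta_A$, then the composites of unit and counit give mutual inequalities $\alpha \sqsubseteq \alpha'$ and $\alpha' \sqsubseteq \alpha$. Concretely, from $\Idn{LA}\sqsubseteq \eta_A\circ\alpha$ and $\alpha'\circ \eta_A \sqsubseteq \Idn{A}$ (using the units and counits of the two adjunctions) we can whisker to obtain $\alpha' \sqsubseteq \alpha' \circ\eta_A\circ\alpha \sqsubseteq \alpha$, and symmetrically $\alpha\sqsubseteq\alpha'$.

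The key point, which is where poset-enrichment does the real work, is that in $\Dcpo$ each hom is a poset: so the mutual inequalities $\alpha\sqsubseteq\alpha'$ and $\alpha'\sqsubseteq\alpha$ collapse to strict equality $\alpha = \alpha'$, rather than merely an isomorphism. Hence the algebra structure on $A$ is literally unique, not just unique up to isomorphism.

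I expect no real obstacle here; the argument is the standard reading of lax idempotency in a locally posetal 2-category, and every ingredient is already in hand from \cref{lem:lax-idempotent} and the fact that $\Dcpo$ is poset-enriched (as recorded at the start of \cref{sec:sierpinski-dcpo}). The only mild subtlety worth flagging in the write-up is the passage from ``adjoints agree up to isomorphism'' to ``adjoints are strictly equal'', which is specific to the posetal setting and is what makes the corollary a genuine uniqueness statement rather than an essential-uniqueness statement.
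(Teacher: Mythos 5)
Your proposal is correct and is exactly the paper's argument: the paper's proof is the one-liner ``Left adjoints are unique!'', relying on \cref{lem:lax-idempotent} just as you do. Your elaboration of the whiskering argument and the remark that poset-enrichment upgrades the usual isomorphism of adjoints to strict equality are accurate fillings-in of that one-liner, not a different route.
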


    \begin{proof}
      Left adjoints are unique!\qed
    \end{proof}

  \end{xsect}

  \begin{xsect}[sec:lift-alg-characterisation]{Lifting algebras, pointed dcpos, and ipos}
    The abstract notion of a lifting algebra can be identified with two more concrete notions: pointed dcpos and inductive partial orders (ipos).

    \begin{definition}
      A subset $U\subseteq A$ of a partial order $A$ is called \demph{semidirected} when for any two $x,y\in U$ there exists an upper bound for $x$ and $y$ in $U$. A subset is called \demph{directed} when it is both semidirected and inhabited.
    \end{definition}

    \begin{definition}
      A partial order $A$ is called \demph{inductive} when any semidirected subset $U\subseteq A$ has a supremum in $A$. A morphism of inductive partial orders is an \demph{inductive function}, \ie one that preserves semidirected suprema.
    \end{definition}

    We shall abbreviate inductive partial orders as \emph{ipos}, writing $\Ipo$ for the category of ipos and morphisms of ipos.

    \begin{definition}
      A dpco $A$ is called \demph{pointed} when it has a bottom element $\bot$, \ie such that $\bot\sqsubseteq a$ for all $a:A$.
    \end{definition}

    \begin{definition}
      A Scott--continuous map between pointed dcpos is called \demph{strict} when it preserves the bottom element.
    \end{definition}

    We shall abbreviate pointed dcpos as \emph{dcppos} and write $\Dcppo$ for the category of pointed dcpos and strict maps.

    \begin{lemma}\label{lem:pointed-vs-semidirected-complete}
      A dcpo $A$ is pointed if and only if it is inductive, i.e. semidirected-complete.
    \end{lemma}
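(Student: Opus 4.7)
The plan is to prove each direction separately, with the key observation being that a semidirected subset differs from a directed one only by the possibility of being empty, and a bottom element precisely fills this gap.

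For the forward direction, I would assume $A$ is a pointed dcpo with bottom element $\bot$, and let $U\subseteq A$ be semidirected. I would form $U\cup\brc{\bot}$: this set is inhabited (by $\bot$), and any two of its elements have a common upper bound in it — two elements of $U$ by semidirectedness of $U$, and any pair involving $\bot$ using the non-$\bot$ element (or $\bot$ itself). Hence $U\cup\brc{\bot}$ is directed, so has a supremum in $A$ by directed-completeness; because $\bot$ lies below every element of $A$, adjoining it does not change the set of upper bounds, and this supremum is also the supremum of $U$.

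For the converse, I would suppose $A$ is inductive. The empty subset $\emptyset\subseteq A$ is semidirected vacuously (the binding condition is a universal statement over pairs and there are none to consider), so by the inductive hypothesis has a supremum $s:A$. Since every $a:A$ is vacuously an upper bound for $\emptyset$, the least upper bound $s$ satisfies $s\sqsubseteq a$ for all $a$, exhibiting $s$ as a bottom element of $A$.

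No step here presents a genuine obstacle; the argument is routine. The only point worth being careful about in the constructive setting is that the definition of semidirectedness must be read as a pure pairwise-bounding condition with no inhabitedness requirement, so that the empty set qualifies without any appeal to excluded middle; this was already built into the definitions above.
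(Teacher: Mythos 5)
Your proof is correct and takes essentially the same route as the paper's: the empty subset is trivially semidirected and its supremum is the bottom element, while adjoining $\bot$ to a semidirected subset yields a directed one with the same supremum. If anything, your justification that adjoining $\bot$ leaves the set of upper bounds unchanged is marginally cleaner than the paper's appeal to cofinality of the inclusion $U\subseteq U\cup\brc{\bot}$, which reads awkwardly in the case where $U$ is empty.
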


    \begin{proof}
      Suppose that $A$ is closed under suprema of semidirected subsets. Then the supremum of the \emph{empty} subset (which is trivially semidirected) is can be seen to be the bottom element using the universal property of suprema.

      Conversely, suppose that $A$ is pointed and let $I\subseteq A$ be semidirected. Then we may replace $I\subseteq A$ by the \emph{directed} subset $I' = I\cup\brc{\bot}$; the inclusion $I\subseteq I'$ is clearly cofinal as $\bot$ lies below everything, so the supremum of $I'$ is also the supremum of $I$.\qed
    \end{proof}

    \begin{lemma}\label{lem:strict-vs-inductive}
      A Scott--continuous morphism between pointed dcpos is strict if and only if it is inductive, i.e. preserves suprema of semidirected subsets.
    \end{lemma}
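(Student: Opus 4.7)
The plan is to argue each direction separately, using the equivalence between pointedness and semidirected-completeness established in \cref{lem:pointed-vs-semidirected-complete} together with the observation from its proof that any semidirected set $I$ may be "directified" by adjoining the bottom element without changing its supremum.

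For the "inductive implies strict" direction, I would simply observe that the empty subset $\emptyset \subseteq A$ is trivially semidirected, so $f$ must preserve its supremum. By (the proof of) \cref{lem:pointed-vs-semidirected-complete}, the supremum of the empty set in a pointed dcpo is exactly its bottom element, so $f(\bot_A) = f(\sup \emptyset) = \sup f(\emptyset) = \sup \emptyset = \bot_B$.

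For the converse "strict implies inductive" direction, let $I \subseteq A$ be an arbitrary semidirected subset and set $I' = I \cup \brc{\bot_A}$. Then $I'$ is directed, and as noted in \cref{lem:pointed-vs-semidirected-complete}, $\sup I' = \sup I$ in $A$ because $\bot_A$ lies below every element of $I$. Applying Scott--continuity and then strictness, we compute
\[
  f\prn{\sup I} = f\prn{\sup I'} = \sup f\prn{I'} = \sup\prn{f\prn{I} \cup \brc{\bot_B}} = \sup f\prn{I},
\]
where the last step uses that $\bot_B$ lies below every element of $B$ (and, in the edge case $I = \emptyset$, that $\sup\brc{\bot_B} = \bot_B = \sup f\prn{\emptyset}$ in the pointed dcpo $B$).

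There is no real obstacle here: the only subtlety worth flagging is the empty case, where $f(I)$ is itself empty and one must verify that $\sup f(I) = \bot_B$ is consistent with the computation --- which is immediate from $B$ being pointed. Both directions are essentially unfoldings of the characterisation of bottom as the empty supremum already established in \cref{lem:pointed-vs-semidirected-complete}.
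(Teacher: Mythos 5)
Your proof is correct and follows essentially the same route as the paper: the forward direction is the paper's ``obvious'' preservation of $\bot$ (which you justify, reasonably, via the empty semidirected supremum), and the converse is exactly the paper's computation $f\prn{\sup I} = f\prn{\sup\prn{I\cup\brc{\bot}}} = \sup\prn{f\prn{I}\cup\brc{\bot}} = \sup f\prn{I}$. Your explicit treatment of the empty case is a welcome extra care, but nothing differs in substance.
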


    \begin{proof}
      An inductive morphism obviously preserves the bottom element. Conversely, let $f\colon A \to B$ preserve directed suprema and the bottom element and let $I\subseteq A$ be a semidirected subset of $A$. To show that $f\DLub{}{I} = \DLub{i:I}f i$, we note that $I\subseteq I\cup\brc{\bot}$ is a cofinal inclusion onto a \emph{directed} subset, and so $f\DLub{}{I} = f\DLub{}\prn{\brc{\bot}\cup I} = \DLub{\One+I}\brk{f\bot \mid f} = \DLub{\One+I}\brk{\bot\mid f} = \DLub{i:I}{f i}$.\qed
    \end{proof}

    \begin{lemma}[Pointed dcpos are lifting algebras]\label{lem:point-to-alg}
      Any pointed dcpo carries a lifting algebra structure.
    \end{lemma}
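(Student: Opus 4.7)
The plan is to use the universal property of $LA$ as the Sierpi\'nski cone of $A$ (\cref{lem:lifting-is-scone}) to both construct the algebra structure and verify its laws at once. Given a pointed dcpo $A$ with bottom element $\bot_A \colon \One \to A$, the inequality $\bot_A \circ {!}_A \sqsubseteq \Idn{A} \colon A \to A$ holds tautologically because $\bot_A$ lies below every element. This is precisely the lax square over $A$ that the cocomma universal property expects, so it factors uniquely through $LA$ to produce a Scott--continuous map $\alpha_A \colon LA \to A$ satisfying $\alpha_A \circ \eta_A = \Idn{A}$ and $\alpha_A \circ \bot = \bot_A$. I would take $\alpha_A$ as the candidate lifting algebra structure.

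The monad unit law is then built into the definition of $\alpha_A$. For the associativity law $\alpha_A \circ L\alpha_A = \alpha_A \circ \mu_A \colon LLA \to A$, I would invoke \cref{cor:joint-epi} applied at $LA$: since $\brk{\bot \mid \eta_{LA}} \colon \One + LA \twoheadrightarrow LLA$ is jointly epimorphic, it suffices to check the two sides after precomposing with $\bot$ and with $\eta_{LA}$. Along $\eta_{LA}$, naturality of $\eta$ at $\alpha_A$ gives $\alpha_A \circ L\alpha_A \circ \eta_{LA} = \alpha_A \circ \eta_A \circ \alpha_A = \alpha_A$ by the already-proved unit law, while the monad identity $\mu_A \circ \eta_{LA} = \Idn{LA}$ gives $\alpha_A \circ \mu_A \circ \eta_{LA} = \alpha_A$. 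Along $\bot$, both composites collapse to $\alpha_A \circ \bot = \bot_A$ using the auxiliary naturality facts $Lf \circ \bot = \bot$ and $\mu_A \circ \bot = \bot$, which are immediate from the pointwise description of $L$ and $\mu$ via the partial-map classifier.

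There is no serious technical obstacle once \cref{lem:lifting-is-scone} and \cref{cor:joint-epi} are in hand: the Sierpi\'nski-cone universal property handles the construction and the unit law in one stroke, and the joint-epimorphism lemma reduces the associativity law to two identities that follow purely from naturality and the monad laws. The main point of care is to prefer the universal-property-driven approach over a direct elementwise computation on $L A = \Sum{\phi:\Omega}{A^{\phi}}$, which would require explicit manipulation of propositions and semidirected suprema and would be markedly less transparent; this is especially welcome in the constructive setting, where such computations must avoid any appeal to case analysis on $\phi$.
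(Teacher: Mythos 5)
Your proof is correct, but it takes a genuinely different route from the paper's. The paper defines $\alpha_A$ elementwise: it sends $u:LA$ to the supremum of the semidirected subset $\Compr{x:A}{u=\eta_A x}$, which exists because a pointed dcpo is inductive by \cref{lem:pointed-vs-semidirected-complete}; the unit law is then declared trivial and the multiplication law is dispatched in one line by the observation that a supremum of suprema is the supremum of a single subset. You instead obtain $\alpha_A$ as the unique factorisation of the lax square $\bot_A\circ{!}_A\sqsubseteq\Idn{A}$ through the Sierpi\'nski cone (\cref{lem:lifting-is-scone}) and verify the multiplication law by restricting along the jointly epimorphic pair $\bot,\eta_{LA}$ into $LLA$ (\cref{cor:joint-epi}), using naturality of $\eta$ and the monad identities. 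Both constructions yield the same map --- the explicit formula for the cocomma factorisation in the proof of \cref{lem:lifting-is-scone} specialises, with $c_0=\bot_A$ and $c_1=\Idn{A}$, to exactly the paper's supremum, since adjoining $\bot_A$ to a subset does not change its supremum. What your route buys is a fully spelled-out verification of the multiplication law (which the paper only sketches) and consistency with the paper's own methodological emphasis on \cref{lem:lifting-is-scone} and \cref{cor:joint-epi} as the central reasoning principles; what the paper's route buys is a concrete description of $\alpha_A$ as a semidirected supremum, which makes the link to inductive partial orders in \cref{cor:lift-alg-characterisation} immediate. There is no circularity in your appeal to \cref{lem:lifting-is-scone} and \cref{cor:joint-epi}, as both precede this lemma in the development.
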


    Of course, by \cref{cor:algebras-unique} any lifting algebra structure we impose on a dcpo, pointed or not, is unique.

    \begin{proof}
      Let $A$ be a pointed dcpo; we define the structure map $\alpha_A\colon LA\to A$ to take $u:LA$ to the supremum of the semidirected subset $\Compr{x:A}{u=\eta_A x}$, computed via \cref{lem:pointed-vs-semidirected-complete}. The unit law is trivial, and the multiplication law follows from the fact that a supremum of suprema can be computed as the supremum of a single subset.\qed
    \end{proof}

    \begin{lemma}[Lifting algebras are pointed]\label{lem:alg-to-point}
      For any lifting algebra $X\in \LiftAlg$, the underlying dcpo $UX$ is pointed.
    \end{lemma}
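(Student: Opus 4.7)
The plan is to produce a bottom element for $UX$ by applying the algebra structure map to the bottom of $LUX$ and then verify the universal property using \cref{lem:lax-idempotent}.

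First, I observe that $LUX$ is itself pointed: the map $\bot \colon \One \hookrightarrow LUX$ from \cref{cor:joint-epi} (equivalently, the element $(\bot_\Omega, !)$ in the explicit description of $L$ from \cref{def:lifting}) is a bottom element, since by construction $(\phi, u) \sqsupseteq (\bot_\Omega, !)$ vacuously for every $(\phi,u) : LUX$. Thus we define the candidate bottom of $UX$ to be
\[
  \bot_{UX} :\equiv \alpha_X\bigl(\bot_{LUX}\bigr).
\]

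Next I would verify that $\bot_{UX}$ is indeed below every element of $UX$. Fix an arbitrary $x : UX$. Since $\bot_{LUX}$ is the bottom of $LUX$, we have $\bot_{LUX} \sqsubseteq \eta_{UX}(x)$. Applying the (monotone) structure map $\alpha_X$ and using the unit law $\alpha_X \circ \eta_{UX} = \Idn{UX}$ for $\LL$-algebras, we obtain
\[
  \bot_{UX} \;=\; \alpha_X\bigl(\bot_{LUX}\bigr) \;\sqsubseteq\; \alpha_X\bigl(\eta_{UX}(x)\bigr) \;=\; x.
\]
This shows $\bot_{UX}$ is a bottom element of $UX$, completing the proof.

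I don't expect any obstacle; the argument is short and only uses that $LUX$ itself is pointed (a direct consequence of the construction of lifting) together with monotonicity of $\alpha_X$ and the unit axiom. One could alternatively invoke \cref{lem:lax-idempotent} explicitly: from $\alpha_X \dashv \eta_{UX}$ and the fact that left adjoints preserve bottom elements when they exist, the image $\alpha_X(\bot_{LUX})$ is forced to be bottom in $UX$; but the direct monotonicity-and-unit-law argument above is more economical.
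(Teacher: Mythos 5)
Your proposal is correct and matches the paper's proof: both define the bottom as $\alpha_X$ applied to the bottom of $LUX$ and verify its universal property against an arbitrary $u:UX$. The only cosmetic difference is that the paper invokes the adjunction $\alpha_X\dashv\eta_{UX}$ from the lax-idempotence lemma, whereas you use the one direction of it that is actually needed (monotonicity of $\alpha_X$ plus the unit law), which you correctly note as equivalent.
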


    \begin{proof}
      The bottom element of $UX$ is obtained by applying the structure map to the bottom element of $LUX$, so we have $\bot :\equiv \alpha_X\prn{\bot,*}$. That this does in fact compute the bottom element can be seen as follows: fixing $u: UX$, we note that $\alpha_X\prn{\bot,*} \sqsubseteq_{UX} u$ is equivalent to $\bot\sqsubseteq_{LUX} \eta_{UX}u$ because $\alpha_X\dashv\eta_{UX}$ by \cref{lem:lax-idempotent} (lax idempotence).\qed
    \end{proof}

    \begin{lemma}[Strict maps \vs homomorphisms]\label{lem:strict-vs-homomorphism}
      A Scott--continuous map between pointed dcpos is strict if and only if it tracks a lifting algebra homomorphism.
    \end{lemma}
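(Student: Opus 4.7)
The plan is to verify the two directions separately, where the forward direction (algebra homomorphism implies strict) is immediate and the reverse direction (strict implies algebra homomorphism) is the substantive content. Write $A$ and $B$ for the two pointed dcpos, equipped with the canonical lifting algebra structures $\alpha_A\colon LA\to A$ and $\alpha_B\colon LB\to B$ from \cref{lem:point-to-alg}; recall from the proof of \cref{lem:alg-to-point} that $\bot_A = \alpha_A\prn{\bot,\ast}$, and similarly for $B$.

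For the easy direction, suppose $f\colon A\to B$ tracks an algebra homomorphism, so $f\circ\alpha_A = \alpha_B\circ Lf$. Then $f\bot_A = f\prn{\alpha_A\prn{\bot,\ast}} = \alpha_B\prn{Lf\prn{\bot,\ast}} = \alpha_B\prn{\bot,\ast} = \bot_B$, using that $Lf$ preserves the defining condition $\phi = \bot$ on elements of the lift.

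For the main direction, suppose $f$ is strict. I would show $f\circ\alpha_A = \alpha_B\circ Lf\colon LA\to B$ by appealing to the joint epimorphism property of $\brk{\bot\mid\eta_A}\colon \One + A \hooktwoheadrightarrow LA$ from \cref{cor:joint-epi}, which reduces the check to the restrictions along $\bot\colon\One\hookrightarrow LA$ and $\eta_A\colon A\hookrightarrow LA$. Along $\bot$, we compute $\prn{f\circ\alpha_A}\bot = f\bot_A = \bot_B = \alpha_B\bot = \prn{\alpha_B\circ Lf}\bot$, where the first equality is by construction of $\bot_A$, the second is strictness of $f$, and the third is by construction of $\bot_B$ together with the fact that $Lf$ sends the bottom element to the bottom element. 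Along $\eta_A$, we use the unit law for the lifting algebra structures together with the naturality square $Lf\circ\eta_A = \eta_B\circ f$ to compute $\prn{f\circ\alpha_A}\circ\eta_A = f = \alpha_B\circ\eta_B\circ f = \alpha_B\circ Lf\circ \eta_A$.

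I expect no real obstacle: everything is just the joint-epi reduction of \cref{cor:joint-epi} applied to the fact that the two evident point-set computations of $f\circ\alpha_A$ and $\alpha_B\circ Lf$ agree at $\bot$ (by strictness) and at every $\eta_A x$ (by the monad unit laws). The only conceptual subtlety is remembering that the algebra structure maps on pointed dcpos produced by \cref{lem:point-to-alg} are the ones determined by lax-idempotence via \cref{lem:lax-idempotent}, so that the reasoning is compatible with \cref{cor:algebras-unique}.
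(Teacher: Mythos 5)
Your proposal is correct and follows essentially the same route as the paper: both directions match, and the substantive direction reduces the commutativity of the homomorphism square via \cref{cor:joint-epi} to the restrictions along $\bot$ and $\eta_A$, handled by strictness and by naturality plus the unit law respectively. The only cosmetic difference is that you spell out the $\bot$-restriction explicitly, where the paper dismisses it by observing that all maps in sight are strict.
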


    \begin{proof}
      It is clear from the proof of \cref{lem:alg-to-point} that a homomorphism of algebras must preserve the bottom element. On the other hand, we suppose that $f\colon A \to B$ is strict to check that the following diagram commutes:
      \begin{equation}\label[diagram]{diag:strict-vs-homomorphism}
        \DiagramSquare{
          nw = LA,
          ne = LB,
          sw = A,
          se = B,
          west = \alpha_A,
          east = \alpha_B,
          north = Lf,
          south = f,
        }
      \end{equation}

      By \cref{cor:joint-epi} and the fact that all maps in sight are strict, it is enough to consider the restriction of \cref{diag:strict-vs-homomorphism} along $\eta_A\colon A\hookrightarrow LA$; then we have $\alpha_B\circ Lf\circ \eta_A = \alpha_B\circ \eta_A\circ f = f = f\circ\alpha_A\circ\eta_A$ by the unit law for algebras.\qed
    \end{proof}

    \begin{corollary}\label{cor:lift-alg-characterisation}
      The 2-categories of lifting algebras, pointed dcpos, and inductive partial orders are all canonically equivalent.
    \end{corollary}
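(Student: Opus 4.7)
The plan is to assemble the three equivalences from the lemmas already proved in this section, and then verify that everything lifts to the 2-categorical level essentially for free because all three 2-categories sit faithfully over $\Dcpo$ (itself poset-enriched).

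First I would handle the equivalence $\Dcppo \simeq \Ipo$: by \cref{lem:pointed-vs-semidirected-complete}, a dcpo is pointed if and only if it is inductive, so pointed dcpos and ipos have the same underlying objects; by \cref{lem:strict-vs-inductive}, a Scott--continuous map between such dcpos is strict if and only if it preserves semidirected suprema, so the hom-sets coincide as well. This gives an isomorphism of categories $\Dcppo\cong\Ipo$ over $\Dcpo$, and hence an equivalence of 2-categories since the poset structures on hom-sets are in both cases inherited from $\Dcpo$.

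Next I would treat the more substantial equivalence $\LiftAlg \simeq \Dcppo$. Define $\Phi\colon \LiftAlg\to\Dcppo$ as the composite of the forgetful $U\colon\LiftAlg\to\Dcpo$ with the pointing supplied by \cref{lem:alg-to-point}; define $\Psi\colon \Dcppo\to\LiftAlg$ on objects by \cref{lem:point-to-alg} and on morphisms by \cref{lem:strict-vs-homomorphism}. That $\Phi$ and $\Psi$ are functorial on morphisms and identities reduces to \cref{lem:strict-vs-homomorphism} (strict $\Longleftrightarrow$ algebra homomorphism). For the round-trips: $\Phi\circ\Psi$ is the identity on $\Dcppo$ because the construction of the algebra structure from a bottom in \cref{lem:point-to-alg} recovers that same bottom via \cref{lem:alg-to-point}; $\Psi\circ\Phi$ is the identity on $\LiftAlg$ by the uniqueness of algebra structures (\cref{cor:algebras-unique}), since both the original algebra structure and the one produced by $\Psi\Phi$ agree on the same underlying dcpo.

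Finally, to promote these to 2-categorical equivalences, I would observe that each of $\LiftAlg$, $\Dcppo$, and $\Ipo$ is locally posetal with its hom-poset inherited as a subposet of $\Dcpo\prn{UX,UY}$: for $\Dcppo$ and $\Ipo$ this is by definition, and for $\LiftAlg$ it follows from the standard fact that algebra homomorphisms form a full sub-poset of dcpo morphisms under the pointwise order. Since the identifications on morphisms established above are by \emph{equality} of underlying dcpo morphisms, the orderings transport on the nose, so the equivalences $\Phi, \Psi$ and $\Dcppo\cong\Ipo$ are all 2-functorial. The main (and only) substantive step is the round-trip $\Psi\circ\Phi \cong \mathrm{id}_{\LiftAlg}$, which is precisely where \cref{cor:algebras-unique} does the work — without uniqueness of algebra structure one would otherwise need to construct a natural isomorphism explicitly.
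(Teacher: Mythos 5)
Your proposal is correct and follows essentially the same route as the paper: the paper simply observes that having a bottom, being semidirected-complete, and carrying a lifting algebra structure are all \emph{properties} (the last by \cref{cor:algebras-unique}), so that all three categories arise as the same non-full subcategory of $\Dcpo$ via the very lemmas you cite, with the 2-categorical enrichment inherited from $\Dcpo$ exactly as you argue. Your version merely unpacks this into explicit functors and round-trip checks, with \cref{cor:algebras-unique} doing the same decisive work in both accounts.
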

    \begin{proof}
      Having and preserving bottom elements, semidirected suprema, and lifting algebra structures are all \emph{properties} (we have seen the latter in \cref{cor:algebras-unique}). Therefore, we may argue that these categories all arise as the same (non-full) subcategory of $\Dcpo$ via \cref{lem:point-to-alg,lem:alg-to-point,lem:strict-vs-homomorphism,lem:pointed-vs-semidirected-complete,lem:strict-vs-inductive}.\qed
    \end{proof}

    \begin{corollary}[Monadicity]\label{cor:monadicity}
      The forgetful functors $\Dcppo\to \Dcpo$ and $\Ipo\to\Dcpo$ are both monadic.
    \end{corollary}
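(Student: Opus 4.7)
The plan is to reduce \cref{cor:monadicity} immediately to \cref{cor:lift-alg-characterisation}, exploiting the fact that monadicity of a functor is a property invariant under equivalences over the base. By construction, the forgetful functor $U\colon\LiftAlg\to\Dcpo$ is monadic, since $\LiftAlg$ is by definition the Eilenberg--Moore category of the lifting 2-monad $\LL$ established in the preceding subsection.

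First I would observe that the canonical equivalences $\LiftAlg\simeq\Dcppo$ and $\LiftAlg\simeq\Ipo$ of \cref{cor:lift-alg-characterisation} are in fact equivalences \emph{over} $\Dcpo$: by the proof of that corollary, all three categories arise as the same (non-full) subcategory of $\Dcpo$ via the zig-zag of constructions \cref{lem:point-to-alg,lem:alg-to-point,lem:strict-vs-homomorphism,lem:pointed-vs-semidirected-complete,lem:strict-vs-inductive}, so the forgetful functors to $\Dcpo$ commute with the equivalences up to canonical natural isomorphism. Consequently, we have a commuting (up to iso) triangle in which the forgetful functors $\Dcppo\to\Dcpo$ and $\Ipo\to\Dcpo$ are each equivalent over $\Dcpo$ to the monadic functor $U\colon\LiftAlg\to\Dcpo$.

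Finally I would invoke the standard fact that if $V\colon\mathscr{D}\to\mathscr{C}$ is monadic and $V'\colon\mathscr{D}'\to\mathscr{C}$ is isomorphic to $V$ as an object of the slice $\mathbf{Cat}/\mathscr{C}$ (with $\mathscr{D}'\simeq\mathscr{D}$ via the isomorphism), then $V'$ is also monadic, since the induced comparison functor from $\mathscr{D}'$ to the Eilenberg--Moore category of the monad generated by $V'$ is equivalent to the one generated by $V$. The main (and only) subtlety here is to be precise that the equivalences of \cref{cor:lift-alg-characterisation} are in fact strictly over $\Dcpo$, which follows immediately because on objects the equivalences preserve the underlying dcpo (the algebra structure, the bottom element, or the semidirected-complete structure being a \emph{property} of the dcpo) and on morphisms they are the identity inclusion into $\Dcpo$-morphisms. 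The corollary follows.
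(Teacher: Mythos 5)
Your proposal is correct and follows exactly the route the paper intends: the paper gives no explicit proof of \cref{cor:monadicity}, leaving it as an immediate consequence of \cref{cor:lift-alg-characterisation}, which identifies $\Dcppo$ and $\Ipo$ with $\LiftAlg$ as the same (non-full) subcategory of $\Dcpo$, so that their forgetful functors coincide up to equivalence over $\Dcpo$ with the monadic $U\colon\LiftAlg\to\Dcpo$. Your added care in checking that the equivalences live over $\Dcpo$ is exactly the point that makes the corollary legitimate, and is consistent with the paper's remark that the algebra structures are mere \emph{properties}.
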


  \end{xsect}

\begin{xsect}{Cocompleteness of lift-algebras}\label{sec:alg-cocomplete}

  \NewDocumentCommand\ICat{}{\mathcal{I}}

  \begin{lemma}\label{lem:lifting-preserves-connected-colimits}
    The lifting endofunctor $L\colon \Dcpo\to \Dcpo$ preserves connected colimits.
  \end{lemma}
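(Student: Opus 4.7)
The plan is to exploit the Sierpi\'nski cone characterization of $LA$ (\cref{lem:lifting-is-scone}) to reduce cocones on $L\circ D$ to pairs consisting of a ``bottom'' part (a map out of $\One$) and an ``$\eta$-part'' (a map out of $D(i)$), and then use connectedness to collapse the former and the universal property of $\colim D$ to factor the latter.

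Fix a connected diagram $D\colon \ICat\to\Dcpo$ with colimiting cocone $c_i\colon D(i)\to C$, and suppose $f_i\colon LD(i)\to X$ is a cocone under $L\circ D$. By the cocomma property of \cref{lem:lifting-is-scone}, each $f_i$ is determined by a triple $(b_i,e_i,\beta_i)$ consisting of $b_i \colon \One\to X$, $e_i\colon D(i)\to X$, and a 2-cell $\beta_i\colon b_i\circ{!}_{D(i)}\sqsubseteq e_i$, where $b_i = f_i\circ\bot$ and $e_i = f_i\circ\eta_{D(i)}$. The cocone condition $f_j\circ LD(\alpha)=f_i$ for $\alpha\colon i\to j$ unfolds (using $LD(\alpha)\circ\bot = \bot$ and $LD(\alpha)\circ\eta_{D(i)} = \eta_{D(j)}\circ D(\alpha)$) to $b_j=b_i$ and $e_j\circ D(\alpha) = e_i$. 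Since $\ICat$ is connected, any two indices are linked by a zigzag of morphisms, so all $b_i$ coincide in a common value $b\colon\One\to X$; and the maps $e_i$ form a cocone on $D$, hence factor uniquely through some $e\colon C\to X$ with $e\circ c_i=e_i$.

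It remains to assemble $b$ and $e$ into a map $LC\to X$ via the Sierpi\'nski cone property of $LC$, which requires the 2-cell $b\circ {!}_C\sqsubseteq e$. For each $i$ we have $b\circ {!}_C\circ c_i = b\circ {!}_{D(i)}\sqsubseteq e_i = e\circ c_i$ via $\beta_i$, so the two maps $b\circ{!}_C, e\colon C\to X$ agree on every cocone leg up to the prescribed 2-cell. By \cref{lem:dcpo-colimits-enriched} colimits in $\Dcpo$ are poset-enriched, so the hom-poset $\Dcpo\prn{C,X}$ is the limit of the hom-posets $\Dcpo\prn{D(i),X}$; in particular, inequalities between maps out of $C$ can be checked componentwise against the cocone, yielding $b\circ{!}_C\sqsubseteq e$. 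Invoking \cref{lem:lifting-is-scone} then produces the desired $f = [b\mid e]\colon LC\to X$ satisfying $f\circ\bot = b$ and $f\circ\eta_C = e$, and hence $f\circ Lc_i = f_i$ by restricting to $\bot$ and $\eta_{D(i)}$ and using \cref{cor:joint-epi}.

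Uniqueness of $f$ is immediate from \cref{cor:joint-epi}: any $f'\colon LC\to X$ agreeing with $f$ after precomposition by every $Lc_i$ must satisfy $f'\circ\bot=b$ (directly) and $f'\circ\eta_C\circ c_i = e_i$ for all $i$, forcing $f'\circ\eta_C=e$ by universality of $C$, whence $f=f'$ since $\bot$ and $\eta_C$ are jointly epimorphic. The only subtlety in the argument is the use of connectedness to identify the $b_i$, together with the appeal to the \emph{2-dimensional} (poset-enriched) universal property of $C$ to obtain the inequality $b\circ{!}_C\sqsubseteq e$; both fail without the connectedness assumption, as expected since $L\One=\Sigma$ is not the initial dcpo.
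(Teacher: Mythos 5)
Your proof is correct and follows essentially the same route as the paper's: decompose each cocone leg $LD(i)\to X$ via the Sierpi\'nski cone property of \cref{lem:lifting-is-scone} into a bottom part and an $\eta$-part, use connectedness to identify the bottom parts and the universal property of $\colim D$ for the $\eta$-parts, obtain the required 2-cell from the poset-enrichment of colimits (\cref{lem:dcpo-colimits-enriched}), and conclude existence and uniqueness of the factorisation via \cref{cor:joint-epi}. No gaps.
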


  \begin{proof}
    Let $A_\bullet\colon \ICat\to\Dcpo$ be a connected diagram, \ie such that $\ICat$ is inhabited and has a finite zig-zag between any two objects; further suppose that there exists a universal cocone ${a_\bullet}\colon A_\bullet\to\brc{A_\infty}$, to check that the lifted cocone $La_\bullet\colon LA_\bullet\to \brc{LA_\infty}$ is also universal. We fix a cocone $b_\bullet\colon LA_\bullet\to\brc{B}$ and must check that there exists a unique map $b_\infty\colon LA_\infty \to B$ factoring $b_\bullet$ through $La_\bullet$.
    We have shown in \cref{lem:lifting-is-scone} that $LA_\infty$ is the Sierpi\'nski cone of $A_\infty$, so a map $b_\infty \colon LA_\infty\to B$ is uniquely determined by an element $b_\infty^\bot\colon \One \to B$ and a map ${b_\infty^\top}\colon A_\infty\to B$ such that $b_\infty^\bot\circ {!}_{A_\infty} \sqsubseteq b_\infty^\top$.

    We first define $b_\infty^\bot$ to be the unique element of $B$ that is equal to $b_k\bot$ for all $k\in\ICat$; that this element is exists and is unique follows from connectedness of $\ICat$. Next, we define $b_\infty^\top\colon A_\infty\to B$ using the universal property of ${a_\bullet}\colon A_\bullet\to \brc{A_\infty}$:
    \begin{equation*}
      \DiagramSquare{
        nw = A_\bullet,
        ne = \brc{A_\infty},
        sw = LA_\bullet,
        se = \brc{B},
        north = a_\bullet,
        east = \brc{b_\infty^\top},
        east/style = {exists, ->},
        west = \eta_{A_\bullet},
        west/style = open immersion,
        south = b_\bullet,
      }
    \end{equation*}

    Finally we check that $b_\infty^\bot\circ {!}_{A_\infty} \sqsubseteq b_\infty^\top$; by \cref{lem:dcpo-colimits-enriched}, it suffices to check that $b_\infty^\bot\circ {!}_{A_i} \sqsubseteq b_\infty^\top\circ a_i$ for each $i\in\ICat$; fixing $x : A_i$, we do indeed have $b_\infty^\bot = b_i\bot \sqsubseteq b_i\prn{\eta_{A_i} x} = b_\infty^\top\prn{a_i x}$ by monotonicity of $b_i\colon LA_i\to B$ on $\bot\sqsubseteq \eta_{A_i}x$.

    Thus we have defined a map $b_\infty\colon LA_\infty\to B$ such that $b_\infty \bot = b_k\bot$ for all $k\in\ICat$ and $b_\infty\prn{\eta_{A_\infty}x} = b_\infty^\top x$ for all $x:A_\infty$. We need to check that $b_\infty\colon LA_\infty\to B$ uniquely factors ${b_\bullet}\colon LA_\bullet\to \brc{B}$ through $LA_\bullet \colon LA_\bullet\to \brc{LA_\infty}$:
    \begin{equation*}
      \begin{tikzpicture}[diagram,baseline=(current bounding box.center)]
        \node (nw) {$LA_\bullet$};
        \node[right = of nw] (ne) {$\brc{LA_\infty}$};
        \node[below = of ne] (se) {$\brc{B}$};
        \draw[->] (nw) to node[above] {$La_\bullet$} (ne);
        \draw[->] (ne) to node[right] {$b_\infty$} (se);
        \draw[->] (nw) to node[sloped,below] {$b_\bullet$} (se);
      \end{tikzpicture}
    \end{equation*}

    We check the factorization above using \cref{cor:joint-epi}. In particular, it is enough to check that $b_\infty\prn{La_i\prn{\bot}} = b_i\bot$ and that $b_\infty\prn{La_i\prn{\eta_{LA_i}x}} = b_i\prn{\eta_{A_i}x}$ for each $x:A_i$. The former holds as we have $b_\infty\prn{La_i\prn{\bot}} = b_\infty\bot = b_\infty^\bot = b_i\bot$, and the latter holds by $b_\infty\prn{La_i\prn{\eta_{LA_i}x}} = b_\infty\prn{\eta_{LA_\infty}\prn{a_i x}} = \beta_\infty^\top\prn{a_i x} = b_i\prn{\eta_{A_i}x}$.
    Finally, we check that any two factorizations $f,g\colon LA_\infty\to B$ of $b_\bullet$ through $LA_\bullet$ are equal. But this follows by construction via \cref{cor:joint-epi} and the universal property of the cocone $a_\bullet\colon A_\bullet\to\brc{A_\infty}$.\qed
  \end{proof}

  \begin{corollary}\label{cor:U-creates-connected-colimits}
    The category of lift-algebras is closed under connected colimits, and these are created by the forgetful functor $U\colon \LiftAlg\to\Dcpo$.
  \end{corollary}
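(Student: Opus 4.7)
The statement is essentially an immediate specialisation of the abstract machinery established in the preliminaries section to the particular monad $\LL$. The plan is to combine three ingredients: the abstract fact that preservation of a class of colimits by a monad's underlying endofunctor implies that the forgetful functor from its Eilenberg--Moore category creates those colimits (\cref{lem:T-preserves-to-U-creates}), the fact that the lifting endofunctor preserves connected colimits (\cref{lem:lifting-preserves-connected-colimits}), and the fact that $\Dcpo$ is cocomplete (\cref{lem:dcpo-cocomplete}).

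Concretely, I would let $\mathbb{D}$ be the class of all connected diagrams in $\Dcpo$. By \cref{lem:lifting-preserves-connected-colimits}, the endofunctor $L$ preserves colimits of diagrams in $\mathbb{D}$, and therefore \cref{lem:T-preserves-to-U-creates} applies and yields that $U\colon \LiftAlg\to\Dcpo$ creates colimits of diagrams in $\mathbb{D}$ in the sense of \cref{def:created-colimit}. To conclude that $\LiftAlg$ actually has all connected colimits (not merely that $U$ creates any that happen to exist below), I would invoke \cref{lem:dcpo-cocomplete}, which guarantees that every connected diagram in $\Dcpo$ admits a universal cocone; unfolding \cref{def:created-colimit}, creation then lifts this universal cocone uniquely to $\LiftAlg$.

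There is essentially no obstacle: all the real work has already been done in the earlier abstract lemmas and in \cref{lem:lifting-preserves-connected-colimits}. The proof is therefore a one-line assembly of citations, and I would write it as such rather than expanding out the definition of creation again.
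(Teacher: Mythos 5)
Your proposal is correct and matches the paper's own proof exactly: the paper's argument is precisely the one-line combination of \cref{lem:dcpo-cocomplete}, \cref{lem:lifting-preserves-connected-colimits}, and \cref{lem:T-preserves-to-U-creates}. Your additional remark about using cocompleteness of $\Dcpo$ to ensure the colimits actually exist (rather than merely being created when they exist) is a correct and helpful unfolding of what the paper leaves implicit.
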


  \begin{proof}
    By \cref{lem:dcpo-cocomplete,lem:lifting-preserves-connected-colimits,lem:T-preserves-to-U-creates}.\qed
  \end{proof}

  \begin{lemma}[{Linton~\cite{linton:1969:coequalizers}}]\label{lem:lift-algebras-coproducts}
    The category of lift-algebras is closed under coproducts.
  \end{lemma}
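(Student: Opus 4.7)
The approach is to invoke the classical construction of \citet{linton:1969:coequalizers}, which presents binary coproducts in a category of algebras as reflexive coequalizers of free algebras built on the underlying objects of the summands. The two prerequisites are (i) coproducts in $\Dcpo$ and (ii) reflexive coequalizers in $\LiftAlg$. The first is supplied by \cref{lem:dcpo-cocomplete}, and the second follows from \cref{cor:U-creates-connected-colimits}, since a reflexive pair together with its common section forms a connected diagram.

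Given $X,Y\in\LiftAlg$, my plan is to construct $X\sqcup Y$ as the coequalizer in $\LiftAlg$ of the following reflexive pair, built by copairing the canonical free-algebra resolutions $LULUX\rightrightarrows LUX\to X$ and $LULUY\rightrightarrows LUY\to Y$ and using that the left adjoint $L$ preserves coproducts:
\begin{equation*}
  L\prn{ULUX + ULUY} \rightrightarrows L\prn{UX + UY}.
\end{equation*}
On each component, one leg is the multiplication $\mu_{U(-)}$ and the other is $L\alpha_{(-)}$, with common section induced by $L\eta_{U(-)}$. The coequalizer exists in $\LiftAlg$ by the preceding paragraph.

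To verify the universal property, I would use the free-forgetful adjunction to identify morphisms $L\prn{UX + UY}\to Z$ in $\LiftAlg$ with dcpo morphisms $UX + UY\to UZ$, hence with pairs $\prn{f\colon UX\to UZ,\ g\colon UY\to UZ}$ in $\Dcpo$. The coequalizing condition for the parallel pair above then unpacks precisely to the statement that $f$ and $g$ commute with the structure maps of $X$, $Y$, and $Z$, i.e.\ that they underlie algebra homomorphisms $X\to Z$ and $Y\to Z$. The main content, and the only step that requires care, is chasing through the naturality of this coequalizing condition and confirming that it coincides with the homomorphism equation; this is routine categorical bookkeeping and is exactly Linton's classical calculation, at which point the universal property of $X\sqcup Y$ follows.
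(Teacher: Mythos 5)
Your proposal is correct and follows essentially the same route as the paper: both realise the coproduct of algebras via Linton's reflexive-coequaliser presentation built from coproducts in $\Dcpo$ (\cref{lem:dcpo-cocomplete}), with the coequaliser existing in $\LiftAlg$ because reflexive coequalisers are connected colimits handled by \cref{cor:U-creates-connected-colimits}. You simply spell out the parallel pair and the universal-property check that the paper leaves implicit.
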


  \begin{proof}
    Coproducts in $\LiftAlg$ are computed using a reflexive coequaliser involving the coproducts from $\Dcpo$. By \cref{cor:U-creates-connected-colimits}, we know that $\LiftAlg$ is closed under reflexive coequalisers and these are computed as in $\Dcpo$.\qed
  \end{proof}

  \begin{corollary}\label{cor:lift-algebras-cocomplete}
    The category of lift-algebras is cocomplete.
  \end{corollary}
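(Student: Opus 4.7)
The plan is to combine the two immediately preceding results by means of the standard fact that coproducts and connected colimits jointly suffice to build every small colimit. Concretely, for any small diagram $D\colon \ICat\to\LiftAlg$, I would decompose $\ICat$ into its connected components $\brc{\ICat_j}_{j\in J}$ (note that the set of connected components exists in our constructive metatheory since $\ICat$ is a small category). Restricting $D$ to each $\ICat_j$ yields a connected diagram $D_j\colon \ICat_j\to\LiftAlg$ whose colimit $C_j\in\LiftAlg$ exists by \cref{cor:U-creates-connected-colimits}. Then the coproduct $\coprod_{j\in J} C_j$ exists by \cref{lem:lift-algebras-coproducts}, and one checks routinely that this coproduct is the colimit of $D$.

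The verification of the last step is entirely formal: a cocone under $D$ is exactly the data of a cocone under each restricted diagram $D_j$, which by the universal property of $C_j$ corresponds to a morphism $C_j\to X$, and the tuple of such morphisms corresponds by the universal property of the coproduct to a map $\coprod_j C_j \to X$. Since each of these bijective correspondences is natural, they compose to give the required universal property.

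I do not expect any real obstacle here; the only substantive ingredients (closure under connected colimits and under coproducts) have already been secured. If anything, the most delicate step is being pedantic about the decomposition into connected components in a constructive setting, but for small diagrams indexed by a set-sized category this poses no difficulty.

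\begin{proof}
  By \cref{cor:U-creates-connected-colimits,lem:lift-algebras-coproducts}, the category $\LiftAlg$ is closed under connected colimits and under arbitrary small coproducts. Any small colimit can be computed as a coproduct of connected colimits, obtained by decomposing the indexing category into its connected components; hence $\LiftAlg$ is cocomplete.
\end{proof}
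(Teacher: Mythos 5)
Your proof is correct and takes essentially the same route as the paper, which likewise deduces the corollary directly from \cref{cor:U-creates-connected-colimits,lem:lift-algebras-coproducts}; the decomposition of a small colimit into a coproduct of connected colimits that you spell out is exactly the standard fact the paper leaves implicit.
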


  \begin{proof}
    By \cref{cor:U-creates-connected-colimits,lem:lift-algebras-coproducts}.\qed
  \end{proof}

\end{xsect}
 
\end{xsect}
\begin{xsect}{Tensorial structure of the lifting adjunction}

  \begin{xsect}{Enrichment and commutativity of the lifting monad}

    We shall view $\Dcpo$ as a symmetric monoidal closed category via its cartesian product and exponential, canonically self-enriched. We first observe that $\LiftAlg = \Dcppo = \Ipo$ inherits this $\Dcpo$-enrichment.

    \begin{lemma}
      The category $\Dcppo$ of pointed dcpos is $\Dcpo$-enriched in the sense that every hom poset $\Dcppo\prn{A,B}$ is closed under suprema of directed subsets.
    \end{lemma}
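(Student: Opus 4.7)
The plan is to reduce the claim to the already-known fact that the hom dcpo $\Dcpo\prn{A,B}$ in the cartesian closed category $\Dcpo$ has directed suprema computed pointwise, and then to observe that strictness is preserved under pointwise directed suprema.

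First I would fix a directed family $\brc{f_i}_{i\in I}$ in the hom poset $\Dcppo\prn{A,B}$ and take its directed supremum $f = \DLub{i\in I}f_i$ in $\Dcpo\prn{A,B}$, where this supremum is computed pointwise via the cartesian closed structure of $\Dcpo$. Because $\Dcppo\prn{A,B}\hookrightarrow \Dcpo\prn{A,B}$ is a subposet, it suffices to verify that $f$ is itself strict, \ie factors through some morphism in $\Dcppo$; then the supremum of the $f_i$ in $\Dcppo\prn{A,B}$ is forced to coincide with the pointwise supremum in $\Dcpo\prn{A,B}$, and directed suprema in $\Dcppo\prn{A,B}$ are thereby created from those in $\Dcpo\prn{A,B}$.

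The strictness check is essentially one line: evaluating at $\bot_A$ and using that the pointwise supremum commutes with evaluation, we have $f\prn{\bot_A} = \DLub{i\in I}f_i\prn{\bot_A} = \DLub{i\in I}\bot_B = \bot_B$, since each $f_i$ is strict by assumption. The only subtlety worth flagging is that the final equality uses inhabitedness of $I$ (a directed set is inhabited), which ensures the supremum of the constant family $\bot_B$ really is $\bot_B$ and not some issue arising from an empty supremum.

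I do not anticipate any serious obstacle: the enrichment is a direct consequence of the self-enrichment of $\Dcpo$ together with the trivial observation that $\bot_B$ is absorbing for suprema from above. If desired, one can package this more abstractly by noting via \cref{cor:lift-alg-characterisation} that $\Dcppo\prn{A,B} = \Ipo\prn{A,B}$ is the equaliser in $\Dcpo$ of two Scott--continuous maps $\Dcpo\prn{A,B}\rightrightarrows B$ (namely evaluation at $\bot_A$ and the constant map at $\bot_B$), and equalisers of dcpos inherit directed suprema from their ambient dcpo; but the direct pointwise argument above is both shorter and more transparent.
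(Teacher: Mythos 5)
Your proposal is correct and follows essentially the same route as the paper: compute the directed supremum in the dcpo exponential $B^A$ and verify strictness pointwise via continuity of evaluation, $\prn{\DLub{i}f_i}\bot = \DLub{i}f_i\bot = \DLub{i}\bot = \bot$. Your remark about inhabitedness of the index set is a sensible (if implicit in the paper) point of care, and the optional equaliser packaging is a fine alternative but not needed.
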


    \begin{proof}
      Given pointed dcpos $A$ and $B$, we must check that the supremum of a directed set of strict maps from $A$ to $B$, computed in the dcpo exponential $B^A$, is strict. This holds because function application is continuous, so we have
      $
        \prn{\DLub{i:I}f_i}\bot = \DLub{i:I}f_i\bot = \DLub{i:I}\bot = \bot
      $.\qed
    \end{proof}

    \begin{lemma}\label{lem:dcpo-powers}
      The category $\Dcppo$ of pointed dcpos is closed under $\Dcpo$-\emph{powers}.
    \end{lemma}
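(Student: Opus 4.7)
The plan is to show that for any dcpo $A$ and pointed dcpo $B$, the $\Dcpo$-power $A\pitchfork B$ may be taken to be the ordinary dcpo exponential $B^A$ equipped with the pointwise bottom element. First I would observe that $B^A$ carries a canonical pointed dcpo structure: the constant function $\lambda a\mathpunct{.}\bot_B$ lies below every Scott--continuous map $A\to B$ pointwise, and pointwise suprema of Scott--continuous maps are Scott--continuous, so $B^A$ is a dcpo with a bottom element. By the preceding lemma, $\Dcppo$ is $\Dcpo$-enriched, so the statement to verify is the universal property $\Dcppo\prn{X, B^A}\cong \Dcpo\prn{A, \Dcppo\prn{X,B}}$ pseudonaturally in $X\in\Dcppo$ and $A\in\Dcpo$.

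Next I would unfold both sides using the cartesian closure of $\Dcpo$. On the left, a strict map $f\colon X\to B^A$ corresponds under currying to a Scott--continuous map $\tilde{f}\colon A\times X\to B$ such that $\tilde{f}\prn{a,\bot_X} = \bot_B$ for every $a:A$. On the right, a Scott--continuous map $g\colon A\to \Dcppo\prn{X,B}$ is, using the inclusion $\Dcppo\prn{X,B}\hookrightarrow \Dcpo\prn{X,B} = B^X$, the same as a Scott--continuous map $g\colon A\to B^X$ whose curried transpose $\tilde{g}\colon A\times X\to B$ satisfies $\tilde{g}\prn{a,\bot_X}=\bot_B$ for every $a$. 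Both sides therefore correspond to the same subposet of $\Dcpo\prn{A\times X, B}$ --- namely, those morphisms that are strict in their second argument --- and the bijection is manifestly natural and monotone in both directions.

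Finally, I would remark that this argument in fact exhibits $B^A$ as the power in the stronger sense of \cref{cor:lift-alg-characterisation}: pointedness is a property rather than extra structure, and the strictness condition is preserved by the cartesian closure isomorphism. The only step that requires any care is checking that the isomorphism $\Dcpo\prn{X,B^A}\cong \Dcpo\prn{A\times X, B}$ restricts correctly to the strict subposets on both sides; but this is immediate from the pointwise definition of the bottom element of $B^A$, since strictness of $f\colon X\to B^A$ at $\bot_X$ asserts precisely that $\tilde{f}\prn{a,\bot_X}=\bot_B$ for all $a:A$. I do not foresee a substantive obstacle --- this is a routine verification of cartesian structure compatible with the pointed refinement, and its main subtlety is notational bookkeeping of the two enrichments.
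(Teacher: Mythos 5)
Your proposal is correct and follows essentially the same route as the paper: take the dcpo exponential $B^A$ with its pointwise bottom element, and identify strict maps into it with Scott--continuous maps $A \to \Dcppo\prn{X,B}$ via cartesian closure of $\Dcpo$, yielding $\Dcppo\prn{X, B^A}\cong \Dcpo\prn{A,\Dcppo\prn{X,B}}$. The extra bookkeeping you flag (restriction of the currying isomorphism to the strict subposets) is exactly the content the paper's proof elides, so nothing further is needed.
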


    \begin{proof}
      Let $A$ be a dcpo and let $B$ be a pointed dcpo. The \emph{power} $A\pitchfork B$ of $B$ by $A$ has the dcpo exponential $B^A$ as its underlying (pointed) dcpo.
      To check the universal property, we observe that a strict map from $C$ to $A\pitchfork B$ is the same as a map from $C\times A$ to $B$ that is strict in its first argument. Of course, this is the same as a Scott--continuous map from $A$ to $\Dcppo\prn{C,B}$. Thus we have $\Dcppo\prn{C,A\pitchfork B} \cong \Dcpo\prn{A,\Dcppo\prn{C,B}}$ and so we are done.\qed
    \end{proof}

    \begin{lemma}\label{lem:lifting-dcpo-enriched}
      The poset-enrichment of the lifting monad $\LL$ on $\Dcpo$ extends to a $\Dcpo$-enrichment.
    \end{lemma}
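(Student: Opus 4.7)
To extend the poset-enrichment of $\LL$ to a $\Dcpo$-enrichment, it suffices to verify that for each pair of dcpos $A,B$ the functorial action $L_{A,B}\colon \Dcpo(A,B) \to \Dcpo(LA,LB)$ preserves directed suprema (monotonicity being already established). The naturality conditions for $\eta$ and $\mu$ carry over from the 2-categorical setting automatically, since directed suprema in the hom-dcpos are computed pointwise and the relevant equalities already hold at the level of individual morphisms.

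The plan is to fix a directed family $\{f_i\}_{i\in I} \subseteq \Dcpo(A,B)$ with supremum $f$, and to show that $Lf = \sup_i L f_i$ as maps $LA \to LB$. Since the exponential $LB^{LA}$ computes directed suprema pointwise, both sides are elements of $\Dcpo(LA,LB)$, and we need them to agree as functions. The key reduction is \cref{cor:lax-epi}: the cotupling $[\bot \mid \eta_A]\colon \One + A \hookrightarrow LA$ is lax epimorphic, so equality of two Scott--continuous maps out of $LA$ can be checked by separately restricting along $\bot\colon \One \hookrightarrow LA$ and along $\eta_A\colon A \hookrightarrow LA$.

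On the first leg, both $Lf$ and each $Lf_i$ preserve the bottom element (the partial-map data $(\bot,\,!)$), so $Lf(\bot) = \bot = \sup_i Lf_i(\bot)$. On the second leg, for any $a\colon A$ we compute
\begin{equation*}
  Lf(\eta_A a) \;=\; \eta_B(f a) \;=\; \eta_B\bigl(\DLub{i\in I} f_i a\bigr)
  \;=\; \DLub{i\in I} \eta_B(f_i a) \;=\; \DLub{i\in I} Lf_i(\eta_A a),
\end{equation*}
where the second equality uses that suprema in $\Dcpo(A,B)$ are pointwise, the third uses Scott--continuity of $\eta_B$, and the final equality uses the naturality square for $\eta$ together with pointwise suprema in $\Dcpo(LA,LB)$. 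By \cref{cor:lax-epi} we conclude $Lf = \sup_i Lf_i$.

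I do not anticipate any substantive obstacle here: the argument is the expected ``check on generators'' proof, analogous to the pattern already used in \cref{lem:lifting-preserves-connected-colimits,lem:strict-vs-homomorphism}. The only conceptual point worth stating explicitly is that \cref{cor:lax-epi} is stronger than the plain joint-epimorphism statement, so that it does indeed let us verify an \emph{equality} (and not merely an inequality) of maps by checking the two legs separately.
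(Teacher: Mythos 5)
Your proposal is correct, and in fact supplies more detail than the paper does: the paper's own proof of this lemma is the single assertion that ``the functorial action and monadic operations can all be internalised as Scott--continuous operations,'' so your generator-based verification that $L_{A,B}\colon\Dcpo(A,B)\to\Dcpo(LA,LB)$ preserves directed suprema --- checking $Lf=\DLub{i}Lf_i$ on the two legs $\bot$ and $\eta_A$ of the Sierpi\'nski cone, using naturality of $\eta$ and pointwise computation of suprema in exponentials --- is a legitimate way of substantiating exactly the claim the paper leaves implicit. One small conceptual slip in your closing remark: the plain joint-epimorphism statement (\cref{cor:joint-epi}) already suffices to verify an \emph{equality} of maps out of $LA$; the strengthening to a lax epimorphism (\cref{cor:lax-epi}) is what one needs to verify \emph{inequalities} $f\sqsubseteq g$, not equalities. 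Since you only need an equality here, either corollary works, so this does not affect the validity of the argument.
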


    \begin{proof}
      The functorial action and monadic operations can all be internalised as Scott--continuous functions.\qed
    \end{proof}

    \begin{corollary}\label{lem:lifting-strong}
      The lifting monad $\LL$ extends to a \emph{strong} monad on $\Dcpo$.
    \end{corollary}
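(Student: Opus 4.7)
The plan is to derive strength from the $\Dcpo$-enrichment established in \cref{lem:lifting-dcpo-enriched} by invoking the standard correspondence, due to Kock, between $\mathcal{V}$-enriched endofunctors on a closed symmetric monoidal $\mathcal{V}$ (viewed as self-enriched) and tensorially strong endofunctors: the enriched functorial action on internal homs transposes to a strength, and the enriched naturality of $\eta$ and $\mu$ transposes to the required compatibility axioms. Since $\Dcpo$ is cartesian closed and self-enriched, this correspondence specialises directly to our setting.

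Concretely, I would proceed as follows. First, unpack \cref{lem:lifting-dcpo-enriched} to extract the enriched functorial action, namely the Scott--continuous map
\begin{equation*}
  L_{A,B}\colon B^A \to \prn{LB}^{LA}
\end{equation*}
which exists because each functorial action $\Dcpo\prn{A,B}\to \Dcpo\prn{LA,LB}$ is itself a morphism of dcpos. Next, define the tensorial strength $\sigma_{A,B}\colon A\times LB \to L\prn{A\times B}$ as the exponential transpose of the composite
\begin{equation*}
  A\xrightarrow{\widehat{\mathrm{id}}} \prn{A\times B}^B \xrightarrow{L_{B,A\times B}} \prn{L\prn{A\times B}}^{LB},
\end{equation*}
where $\widehat{\mathrm{id}}$ is the transpose of the identity on $A\times B$. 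On points this simply sends $\prn{a,\prn{\phi,u}}$ to $\prn{\phi,\lambda x\mathpunct{.}\prn{a,u x}}$, as one expects from the partial map classifier description of \cref{def:lifting}.

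Finally, verify the four strength axioms: compatibility with the associator and unitor of the cartesian structure, and the two diagrams expressing compatibility with $\eta$ and $\mu$. Each of these follows mechanically from the fact that $\eta$ and $\mu$ are 2-natural together with the enriched functoriality of $L$; alternatively, and more cleanly, one may check them pointwise using the explicit formula above and the description of the monad unit and multiplication on the partial map classifier. In either case no new content is required beyond what has already been proved, so this is essentially a bookkeeping step rather than a genuine obstacle. If one wished to avoid even this bookkeeping, it suffices to cite the general fact that the 2-functor assigning to a symmetric monoidal closed category $\mathcal{V}$ its 2-category of $\mathcal{V}$-enriched monads is equivalent to the 2-category of strong monads on $\mathcal{V}$, and apply this equivalence to \cref{lem:lifting-dcpo-enriched}.
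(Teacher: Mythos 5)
Your proposal is correct and follows essentially the same route as the paper, which likewise obtains the strength from the $\Dcpo$-enrichment of \cref{lem:lifting-dcpo-enriched} via the standard correspondence between strengths and enrichments for monads on a (cartesian) closed category (the paper cites \citet{mcdermott-uustalu:2022} for this; your attribution to Kock is the classical antecedent). Your explicit transpose construction and pointwise formula are a harmless elaboration of what the paper leaves to the cited correspondence.
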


    \begin{proof}
      Strengths for a given monad on a cartesian closed category $\mathcal{V}$ correspond precisely to $\mathcal{V}$-enrichments of the monad~\cite{mcdermott-uustalu:2022}.\qed
    \end{proof}

    \begin{lemma}
      The $\Dcpo$-enriched lifting monad $\LL$ is commutative.
    \end{lemma}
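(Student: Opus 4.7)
The plan is to verify the equation asserting the equality of the two canonical composite strengths $LA \times LB \to L\prn{A \times B}$ that together express commutativity of the strong monad $\LL$. The approach is to reduce this to a finite case analysis, applying the Sierpi\'nski cone universal property twice.

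First I would upgrade \cref{cor:lax-epi} to the binary setting. Since $\Dcpo$ is cartesian closed, each endofunctor $C \times -$ is a left adjoint and hence preserves colimits; in particular, lax epimorphisms are stable under binary product with any object. Combining this with \cref{cor:lax-epi} applied in each coordinate, the product map $\brk{\bot\mid\eta_A} \times \brk{\bot\mid\eta_B} \colon \prn{\One + A} \times \prn{\One + B} \hooktwoheadrightarrow LA \times LB$ is itself lax epimorphic. It therefore suffices to verify the commutativity equation after restriction to each of the four summands $\One\times\One$, $A\times\One$, $\One\times B$, and $A\times B$.

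Next I would read off how the strength $\sigma_{A,B}\colon A \times LB \to L\prn{A \times B}$ and costrength $\tau_{A,B}\colon LA \times B \to L\prn{A \times B}$ act on generators. Using \cref{lem:prod-preserves-cocomma} together with the Sierpi\'nski cone universal property of lifting (\cref{lem:lifting-is-scone}), both maps are determined by their action on the two coproduct summands of $LB$ (resp.\ $LA$): $\sigma_{A,B}\prn{a,\bot} = \bot$ and $\sigma_{A,B}\prn{a,\eta_B b} = \eta_{A\times B}\prn{a,b}$, and dually for $\tau_{A,B}$. Given this, the commutativity equation is immediate by inspection: on the summand $A \times B$ both composites yield $\eta_{A\times B}\prn{a,b}$ by the monad unit law, and on each of the three summands involving a $\bot$-component both composites collapse to $\bot \in L\prn{A \times B}$ by the strictness of $\mu$ (which is a strict map between pointed dcpos by \cref{lem:alg-to-point,lem:strict-vs-homomorphism}) and the vanishing of $\sigma$ and $\tau$ on such arguments.

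The main obstacle is purely organisational: once the actions of $\sigma$ and $\tau$ on generators are pinned down from the Sierpi\'nski cone universal property, the commutativity equation falls out from a transparent four-case inspection and no substantive computation is required. The one step that requires a moment of care is the upgrade of \cref{cor:lax-epi} to a binary product of lax epimorphisms, but this is automatic from the cartesian closure of $\Dcpo$.
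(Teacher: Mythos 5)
Your proof is correct, but it takes a genuinely different route from the paper. The paper does not verify the commutativity square $LA\times LB\to L\prn{A\times B}$ directly: it invokes Kock's criterion for commutativity of a strong monad on a closed category, which reduces everything to checking that the extension map $\prn{-}^\dagger\colon A\pitchfork B\to LA\pitchfork B$ is strict for each pointed $B$; that single strictness check is then discharged by \cref{cor:joint-epi} exactly as you would expect ($\prn{\Lam{x}\bot}^\dagger\bot=\bot$ and $\prn{\Lam{x}\bot}^\dagger\prn{\eta_A a}=\bot$). Your argument instead verifies the two composite strengths agree by restricting along $\brk{\bot\mid\eta_A}\times\brk{\bot\mid\eta_B}$ and doing the four-case inspection. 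All the ingredients you use are sound: the product of (jointly) epimorphic maps is epimorphic because $C\times-$ is a left adjoint (the paper uses precisely this fact later, in \cref{lem:bistrict-iff-bilinear} and \cref{lem:L-symmetric}), and the actions of the strength and costrength on $\bot$ and $\eta$-elements are as you state. Note that for an \emph{equation} between two maps, plain joint epimorphy (\cref{cor:joint-epi}) already suffices; the lax-epi upgrade of \cref{cor:lax-epi} is only needed for inequalities, so your appeal to it is harmless but unnecessary. What each approach buys: yours is self-contained and elementary, at the cost of writing out both composites and an extra (routine) product-of-epis step; the paper's is shorter and delegates the combinatorics to Kock's general theorem, which it needs to cite anyway for the subsequent monoidal structure.
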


    \begin{proof}
      We use Kock's criterion for commutativity of a strong monad on a closed category. Fixing a pointed dcpo $B$ and a dcpo $A$, we must check that the extension map $\prn{-}^\dagger\colon A\pitchfork B\to LA\pitchfork B$ is \emph{strict}. As the bottom element of any power $I\pitchfork B$ is pointwise, we are trying to check that $\prn{\Lam{x}\bot}^\dagger u = \bot$ for any $u:LA$. By \cref{cor:joint-epi}, it suffices to observe that $\prn{\Lam{x}\bot}^\dagger\bot = \bot$ and $\prn{\Lam{x}\bot}^\dagger\prn{\eta_Aa} = \prn{\Lam{x}\bot}\prn{a} = \bot$.\qed
    \end{proof}

    \begin{corollary}
      The lifting monad $\LL$ is symmetric monoidal.
    \end{corollary}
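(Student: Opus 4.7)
The plan is to invoke the classical theorem of \citet{kock:1972} establishing that, on a symmetric monoidal closed category, commutative strong monads are equivalent to symmetric monoidal monads; more precisely, the underlying endofunctor together with its unit and multiplication is common to both notions, and a monoidal coherence map is uniquely determined by a commutative strength. Since $\Dcpo$ is symmetric monoidal closed under its cartesian product and exponential, and the immediately preceding lemma establishes commutativity of the strong monad $\LL$, this correspondence immediately yields the desired symmetric monoidal structure on $\LL$.

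Concretely, the monoidal coherence morphism $\phi_{A,B}\colon LA\times LB\to L\prn{A\times B}$ is obtained as the common value of the two composites formed by Kleisli-extending the strength in one argument and the costrength in the other; commutativity asserts exactly that these two composites agree, so $\phi_{A,B}$ is unambiguously defined. The remaining axioms of a symmetric monoidal monad --- coherence of $\phi$ with $\eta$, $\mu$, the associator, and the symmetry of the cartesian product --- then hold automatically by the Kock correspondence.

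I anticipate no essential obstacle here, since all substantive work has already been carried out in the preceding results: strength is supplied by \cref{lem:lifting-strong} and commutativity by the immediately preceding lemma. The present corollary merely repackages this data via a standard categorical equivalence, and so the proof reduces to citing the appropriate theorem.
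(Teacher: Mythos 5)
Your proposal is correct and takes exactly the same route as the paper, whose entire proof is the remark that being symmetric monoidal ``is in fact equivalent to being commutative'' --- i.e.\ the same appeal to Kock's correspondence between commutative strong monads and symmetric monoidal monads that you cite. Your additional description of the coherence map as the common value of the two strength/costrength composites matches the commutator construction given immediately afterwards in the paper.
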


    \begin{proof}
      This is in fact equivalent to being commutative.\qed
    \end{proof}

    \begin{construction}[Commutator]
      We define the commutator \[ \kappa_{A,B}\colon LA\times LB \to L\prn{A\times B}\] by iterated (internal) Kleisli extension; the commutativity property ensures that it doesn't matter in which order these extensions are taken.
    \end{construction}

  \end{xsect}

  \begin{xsect}[sec:smash-product]{Smash products and the universal bistrict morphism}

    \begin{lemma}\label{lem:tfae-pre-bistrictness}
      The following are equivalent for a morphism of dcpos $f\colon A\times B\to C$ where $A$ and $B$ are pointed:
      \begin{enumerate}
        \item Any of the following diagrams commute:
              \begin{equation*}
                \label[diagram]{diag:pre-bistrict:pure-pure}
                \begin{tikzpicture}[diagram,baseline=(0.base)]
                  \node(0) {$A+B$};
                  \node[right = 4.25cm of 0] (1) {$A\times B$};
                  \node[right = of 1] (2) {$C$};
                  \draw[->,transform canvas={yshift=.1cm}] (0) to node[above] {$\bot\circ\Bang{A+B}$} (1);
                  \draw[->,transform canvas={yshift=-.1cm}] (0) to node[below] {$\brk{\prn{\Idn{A},\bot} \mid \prn{\bot,\Idn{B}}}$} (1);
                  \draw[->] (1) to node[above] {$f$} (2);
                \end{tikzpicture}%
              \end{equation*}
              \begin{equation}\label[diagram]{diag:pre-bistrict:lift-pure}
                \begin{tikzpicture}[diagram,baseline=(0.base)]
                  \node(0) {$L\prn{A+B}$};
                  \node[right = 4.5cm of 0] (1) {$A\times B$};
                  \node[right = of 1] (2) {$C$};
                  \draw[->,transform canvas={yshift=.1cm}] (0) to node[above] {$\bot\circ\Bang{L\prn{A+B}}$} (1);
                  \draw[->,transform canvas={yshift=-.1cm}] (0) to node[below] {$\brk{\prn{\Idn{A},\bot} \mid \prn{\bot,\Idn{B}}}^\dagger$} (1);
                  \draw[->] (1) to node[above] {$f$} (2);
                \end{tikzpicture}%
              \end{equation}

        \item For any $a:A$ and $b:B$ we have $f\prn{\bot,b}= f\prn{a,\bot}$.
      \end{enumerate}

      \begin{proof}
        The last condition is immediately equivalent to \cref{diag:pre-bistrict:pure-pure} commuting. The equivalence between \cref{diag:pre-bistrict:pure-pure,diag:pre-bistrict:lift-pure} is deduced from \cref{cor:joint-epi}, noting that the parallel maps in \cref{diag:pre-bistrict:lift-pure} are both strict.\qed
      \end{proof}
    \end{lemma}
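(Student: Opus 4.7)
The plan is to establish the three-way equivalence among commutativity of \cref{diag:pre-bistrict:pure-pure}, commutativity of \cref{diag:pre-bistrict:lift-pure}, and the pointwise condition (2). I would start by unpacking \cref{diag:pre-bistrict:pure-pure}: post-composing $\bot\circ \Bang{A+B}$ with $f$ gives the constant map $f\prn{\bot,\bot}$, while post-composing $\brk{\prn{\Idn{A},\bot}\mid\prn{\bot,\Idn{B}}}$ with $f$ sends $a\in A$ to $f\prn{a,\bot}$ and $b\in B$ to $f\prn{\bot,b}$. Hence \cref{diag:pre-bistrict:pure-pure} commutes precisely when $f\prn{a,\bot} = f\prn{\bot,\bot} = f\prn{\bot,b}$ for all $a,b$, which is equivalent to condition (2) (by instantiating either argument at $\bot$).

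Next I would handle the equivalence between the two diagrams. The direction from \cref{diag:pre-bistrict:lift-pure} to \cref{diag:pre-bistrict:pure-pure} is immediate: precompose both legs with $\eta_{A+B}\colon A+B\hookrightarrow L\prn{A+B}$ and apply the Kleisli unit law (together with the identity $\bot\circ\Bang{L\prn{A+B}}\circ\eta_{A+B} = \bot\circ\Bang{A+B}$). For the converse, I would invoke \cref{cor:joint-epi}: the pair $\bot\colon \One\hookrightarrow L\prn{A+B}$ and $\eta_{A+B}$ is jointly epimorphic, so it suffices to check agreement of the two legs of \cref{diag:pre-bistrict:lift-pure} after precomposition with each. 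Precomposition with $\eta_{A+B}$ yields exactly \cref{diag:pre-bistrict:pure-pure}, which holds by assumption; precomposition with $\bot$ will be automatic provided both legs are \emph{strict}.

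Verifying strictness is the only real content of the argument. For $\bot\circ\Bang{L\prn{A+B}}$ it is trivial. For $\brk{\prn{\Idn{A},\bot}\mid\prn{\bot,\Idn{B}}}^\dagger$, strictness follows from the fact that Kleisli extension into $A\times B$ is computed as $\alpha_{A\times B}\circ L\prn{-}$, where the algebra structure comes from the canonical lifting algebra on $A\times B$ (which is pointed since $A$ and $B$ are, by \cref{lem:point-to-alg}); the structure map $\alpha_{A\times B}$ preserves $\bot$ by \cref{lem:lax-idempotent}, so the extension is strict as a composite of strict maps. Once this strictness is noted, the joint-epimorphic reduction of \cref{cor:joint-epi} collapses the whole equivalence to the pointwise calculation of the first paragraph, and no further obstacle remains.
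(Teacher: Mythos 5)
Your proposal is correct and follows essentially the same route as the paper: identify condition (2) with the commutativity of \cref{diag:pre-bistrict:pure-pure} by direct computation, and reduce \cref{diag:pre-bistrict:lift-pure} to \cref{diag:pre-bistrict:pure-pure} via the joint epimorphicity of $\bot$ and $\eta_{A+B}$ (\cref{cor:joint-epi}), the only substantive point being the strictness of the two parallel maps into $A\times B$. Your explicit verification that the Kleisli extension $\brk{\prn{\Idn{A},\bot}\mid\prn{\bot,\Idn{B}}}^\dagger = \alpha_{A\times B}\circ L\prn{-}$ is strict merely spells out what the paper leaves as a remark.
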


    \begin{lemma}\label{lem:tfae-pre-bistrictness:pointed-codomain}
      The following are equivalent for a (not necessarily strict) morphism $f\colon A\times B\to C$ of dcpos where $A$, $B$, and $C$ are pointed:
      \begin{enumerate}
        \item Any of the equivalent conditions of \cref{lem:tfae-pre-bistrictness}.
        \item Either of the following diagrams commute:
              \begin{equation}\label[diagram]{diag:pre-bistrict:pure-lift}
                \begin{tikzpicture}[diagram,baseline=(0.base)]
                  \node(0) {$A+B$};
                  \node[right = 5cm of 0] (1) {$L\prn{A\times B}$};
                  \node[right = of 1] (2) {$C$};
                  \draw[->,transform canvas={yshift=.1cm}] (0) to node[above] {$\bot\circ\Bang{A+B}$} (1);
                  \draw[->,transform canvas={yshift=-.1cm}] (0) to node[below] {$\eta_{A\times B}\circ \brk{\prn{\Idn{A},\bot} \mid \prn{\bot,\Idn{B}}}$} (1);
                  \draw[->] (1) to node[above] {$f^\dagger$} (2);
                \end{tikzpicture}
              \end{equation}
              \begin{equation}\label[diagram]{diag:pre-bistrict:lift-lift}
                \begin{tikzpicture}[diagram,baseline=(0.base)]
                  \node(0) {$L\prn{A+B}$};
                  \node[right = 5cm of 0] (1) {$L\prn{A\times B}$};
                  \node[right = of 1] (2) {$C$};
                  \draw[->,transform canvas={yshift=.1cm}] (0) to node[above] {$\bot\circ\Bang{L\prn{A+B}}$} (1);
                  \draw[->,transform canvas={yshift=-.1cm}] (0) to node[below] {$L\brk{\prn{\Idn{A},\bot} \mid \prn{\bot,\Idn{B}}}$} (1);
                  \draw[->] (1) to node[above] {$f^\dagger$} (2);
                \end{tikzpicture}
              \end{equation}
      \end{enumerate}
    \end{lemma}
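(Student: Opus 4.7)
The plan is to derive both equivalences from \cref{lem:tfae-pre-bistrictness} by exploiting the defining properties of the Kleisli extension $f^\dagger\colon L\prn{A\times B}\to C$, which exists because $C$, being a pointed dcpo, carries a unique lifting-algebra structure by \cref{lem:point-to-alg,cor:algebras-unique}. Concretely, $f^\dagger$ is characterised as the unique strict dcpo morphism satisfying $f^\dagger\circ\eta_{A\times B} = f$.

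First I would establish the equivalence of \cref{diag:pre-bistrict:pure-lift} with the conditions of \cref{lem:tfae-pre-bistrictness}. The two parallel maps of \cref{diag:pre-bistrict:pure-lift} are obtained by post-composing the two parallel maps of \cref{diag:pre-bistrict:pure-pure} with $\eta_{A\times B}$ and then replacing the final $f$ by $f^\dagger$. The unit law $f^\dagger\circ\eta_{A\times B}=f$ shows that the resulting $C$-valued composites in \cref{diag:pre-bistrict:pure-lift} agree with those in \cref{diag:pre-bistrict:pure-pure}; the identification of the upper legs uses strictness of $f^\dagger$ together with the pointedness of $C$. The two diagrams therefore commute under the same condition.

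Second, I would establish that \cref{diag:pre-bistrict:lift-lift} and \cref{diag:pre-bistrict:pure-lift} are equivalent using the joint lax epimorphism of \cref{cor:lax-epi}. Both parallel maps of \cref{diag:pre-bistrict:lift-lift} are strict dcpo morphisms out of $L\prn{A+B}$: the upper one by construction, the lower one because $L\brk{\prn{\Idn{A},\bot}\mid\prn{\bot,\Idn{B}}}$ is strict and $f^\dagger$ is strict. Therefore, equality of the two $C$-valued composites can be tested by restriction along $\eta_{A+B}\colon A+B\hookrightarrow L\prn{A+B}$. Using naturality of $\eta$ to compute $L\brk{\prn{\Idn{A},\bot}\mid\prn{\bot,\Idn{B}}}\circ\eta_{A+B}=\eta_{A\times B}\circ\brk{\prn{\Idn{A},\bot}\mid\prn{\bot,\Idn{B}}}$, this restriction produces precisely \cref{diag:pre-bistrict:pure-lift}.

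The main obstacle is essentially bookkeeping: carefully tracking the interplay between the unit law of $f^\dagger$, its strictness, and the universal property of $L$ as a free algebra over a pointed dcpo. All the conceptual content has already been supplied by the Sierpi\'nski-cone description of \cref{lem:lifting-is-scone} that underlies \cref{cor:lax-epi}, combined with the Kock--Z\"oberlein structure of \cref{lem:lax-idempotent}.
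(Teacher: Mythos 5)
Your overall strategy tracks the paper's quite closely. For the first equivalence you argue, as the paper does, that the unit law $f^\dagger\circ\eta_{A\times B}=f$ identifies \cref{diag:pre-bistrict:pure-lift} with \cref{diag:pre-bistrict:pure-pure}. For the second equivalence you take a mildly different but sound route: the paper relates \cref{diag:pre-bistrict:lift-lift} directly to \cref{diag:pre-bistrict:lift-pure} by the same unit-law argument, whereas you relate it to \cref{diag:pre-bistrict:pure-lift} by restricting along the jointly epimorphic pair of \cref{cor:joint-epi}, discharging the $\bot$-leg by strictness of both composites out of $L\prn{A+B}$ and the $\eta$-leg by naturality of $\eta$. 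Your version is, if anything, more explicit than the paper's about why that reduction is legitimate, and it is correct.

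There is, however, one step that your write-up asserts without establishing, namely the ``identification of the upper legs'' in the first equivalence. The upper composite of \cref{diag:pre-bistrict:pure-lift} is the constant map at $f^\dagger\prn{\bot_{L\prn{A\times B}}}=\bot_C$, by strictness of $f^\dagger$ (it is an algebra map out of a free algebra); the upper composite of \cref{diag:pre-bistrict:pure-pure} is the constant map at $f\prn{\bot_A,\bot_B}$. These coincide only when $f\prn{\bot,\bot}=\bot_C$, which the hypotheses do not supply: the statement explicitly allows $f$ to be non-strict, and a constant map $f$ at a non-bottom element of $C$ satisfies condition (2) of \cref{lem:tfae-pre-bistrictness} while failing \cref{diag:pre-bistrict:pure-lift}. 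So the equivalence really requires $f\prn{\bot,\bot}=\bot$ --- for instance $f$ strict, as is the case in every downstream use (\cref{def:bistrict}, \cref{cor:smash-coequalisers}). In fairness, the paper's own proof, a one-line appeal to the unit law, glosses over exactly the same point; but since your proof explicitly claims the upper legs are identified by ``strictness of $f^\dagger$ together with the pointedness of $C$'', you should either add the missing hypothesis or note that the unit-law argument only identifies the lower legs.
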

    \begin{proof}
      \cref{diag:pre-bistrict:pure-pure} commutes if and only if \cref{diag:pre-bistrict:pure-lift} commutes, by the unit law for $C$ as a lifting algebra; for the same reason, \cref{diag:pre-bistrict:lift-pure} commutes if and only if \cref{diag:pre-bistrict:lift-lift} commutes.\qed
    \end{proof}

    \begin{definition}[Bistrict morphism]\label{def:bistrict}
      Let $A$, $B$, and $C$ be pointed dcpos. A Scott--continuous morphism $f\colon A\times B \to C$ is called \demph{bistrict} when any of the following equivalent conditions hold:
      \begin{enumerate}
        \item The morphism $f\colon A\times B\to C$ is strict and satisfies any of the equivalent conditions of \cref{lem:tfae-pre-bistrictness,lem:tfae-pre-bistrictness:pointed-codomain}.
        \item For any $a:A$ and $b:B$ we have $f\prn{\bot,b}= f\prn{a,\bot} = \bot$.
      \end{enumerate}
    \end{definition}

    \begin{theorem}[The universal bistrict map]\label{lem:uni-bistrict}
      For any pointed dcpos $A$ and $B$, we may define a pointed $A\otimes B$ equipped with a \emph{universal} bistrict map $\otimes_{A,B}\colon A\times B\to A\otimes B$, in the sense that any bistrict $f\colon A\times B\to C$ factors uniquely through it by a unique strict map $\bar{f}\colon A\otimes B\to C$ as depicted below:
      \begin{equation}\label[diagram]{diag:uni-bistrict}
        \begin{tikzpicture}[diagram, baseline=(current bounding box.center)]
          \node (nw) {$A\times B$};
          \node[right = of nw] (ne) {$C$};
          \node[below = of nw] (sw) {$A\otimes B$};
          \draw[->] (nw) to node[left] {$\otimes_{A,B}$} (sw);
          \draw[->] (nw) to node[above] {$f$} (ne);
          \draw[->,exists] (sw) to node[sloped,below] {$\exists! \bar{f}$} (ne);
        \end{tikzpicture}
      \end{equation}

      Moreover, the following diagram is a coequaliser in $\Dcppo$:
      \begin{equation}\label[diagram]{diag:smash-coeq:dcppo}
        \begin{tikzpicture}[diagram,baseline=(0.base)]
          \node(0) {$L\prn{A+B}$};
          \node[right = 4.5cm of 0] (1) {$A\times B$};
          \node[right = 2.5cm of 1] (2) {$A\otimes B$};
          \draw[->,transform canvas={yshift=.1cm}] (0) to node[above] {$\bot\circ\Bang{L\prn{A+B}}$} (1);
          \draw[->,transform canvas={yshift=-.1cm}] (0) to node[below] {$\brk{\prn{\Idn{A},\bot} \mid \prn{\bot,\Idn{B}}}^\dagger$} (1);
          \draw[->>] (1) to node[above] {$\otimes_{A,B}$} (2);
        \end{tikzpicture}
      \end{equation}
    \end{theorem}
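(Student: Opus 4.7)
The plan is to \emph{define} $A\otimes B$ to be the coequaliser in $\Dcppo$ of the parallel pair displayed in \cref{diag:smash-coeq:dcppo}, and then read off the universal property directly from this definition. Before invoking cocompleteness of $\Dcppo$ (via \cref{cor:lift-algebras-cocomplete,cor:lift-alg-characterisation}), I would first verify that both parallel maps $L\prn{A+B}\rightrightarrows A\times B$ are strict morphisms of pointed dcpos, so that the coequaliser can sensibly be taken there: the constant-$\bot$ map is strict by inspection, while the Kleisli extension $\brk{\prn{\Idn{A},\bot}\mid\prn{\bot,\Idn{B}}}^\dagger$ is strict because Kleisli extension into any lifting algebra is computed via the algebra's structure map, and therefore sends $\bot\in L\prn{A+B}$ to $\bot$ in $A\times B$.

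Writing $\otimes_{A,B}\colon A\times B\to A\otimes B$ for the coequalising strict map, the universal property of the coequaliser yields, for each pointed dcpo $C$, a natural bijection between strict maps $\bar f\colon A\otimes B\to C$ and strict maps $f\colon A\times B\to C$ satisfying the equation that the two composites appearing in \cref{diag:pre-bistrict:lift-pure} agree. I would then apply \cref{lem:tfae-pre-bistrictness} to re-express this equation as the pointwise condition $f\prn{\bot,b}=f\prn{a,\bot}$, which combined with the assumed strictness of $f$ is precisely the content of \cref{def:bistrict}. Hence strict maps $A\otimes B\to C$ correspond naturally and bijectively to bistrict maps $A\times B\to C$, which is exactly the universal property displayed in \cref{diag:uni-bistrict}; in particular, $\otimes_{A,B}$ itself is bistrict, as under this bijection it corresponds to the identity on $A\otimes B$.

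The second clause --- that \cref{diag:smash-coeq:dcppo} is a coequaliser in $\Dcppo$ --- then holds by the very definition of $A\otimes B$. The only substantive ingredient is the existence of coequalisers in $\Dcppo$, which is \cref{cor:lift-algebras-cocomplete}; the remainder is routine translation between a commutative square and a pointwise equation. I do not anticipate any real obstacle beyond carefully distinguishing coequalisers taken in $\Dcpo$ from those taken in $\Dcppo$ (which will in general differ, though no comparison between them is needed for the present statement).
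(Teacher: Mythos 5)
Your proposal is correct and follows essentially the same route as the paper: define $A\otimes B$ as the coequaliser in $\Dcppo$ of the displayed pair (which exists by \cref{cor:lift-algebras-cocomplete}), observe via \cref{lem:tfae-pre-bistrictness} that coequalising the pair is exactly the bistrictness condition for a strict map out of $A\times B$, and read off the universal property. Your additional check that both parallel maps are strict is a sensible piece of due diligence that the paper leaves implicit.
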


    \begin{proof}
      We may compute the desired coequaliser, as we have already shown in \cref{cor:lift-algebras-cocomplete} that $\LiftAlg=\Dcppo$ is cocomplete. The coequaliser map $\otimes_{A,B}\colon A\times B\twoheadrightarrow A\otimes B$ is bistrict by definition, as \cref{diag:smash-coeq:dcppo} is an instance of \cref{diag:pre-bistrict:lift-pure} from \cref{lem:tfae-pre-bistrictness}. The unique factorisation condition of \cref{diag:uni-bistrict} is, then, precisely the universal property of \cref{diag:smash-coeq:dcppo} as a coequaliser in $\Dcppo$.\qed
    \end{proof}

    \begin{corollary}\label{cor:smash-coequalisers}
      The following are coequaliser diagrams in both $\Dcppo$ and $\Dcpo$:
      \begin{equation*}
        \begin{tikzpicture}[diagram,baseline=(0.base)]
          \node(0) {$L\prn{A+B}$};
          \node[right = 4.5cm of 0] (1) {$A\times B$};
          \node[right = 2.5cm of 1] (2) {$A\otimes B$};
          \draw[->,transform canvas={yshift=.1cm}] (0) to node[above] {$\bot\circ\Bang{L\prn{A+B}}$} (1);
          \draw[->,transform canvas={yshift=-.1cm}] (0) to node[below] {$\brk{\prn{\Idn{A},\bot} \mid \prn{\bot,\Idn{B}}}^\dagger$} (1);
          \draw[->>] (1) to node[above] {$\otimes_{A,B}$} (2);
        \end{tikzpicture}
        \tag{\ref{diag:smash-coeq:dcppo}}
      \end{equation*}
      \begin{equation}\label[diagram]{diag:smash-coeq:2}
        \begin{tikzpicture}[diagram,baseline=(0.base)]
          \node(0) {$L\prn{A+B}$};
          \node[right = 4.5cm of 0] (1) {$L\prn{A\times B}$};
          \node[right = 2.75cm of 1] (2) {$A\otimes B$};
          \draw[->,transform canvas={yshift=.1cm}] (0) to node[above] {$\bot\circ\Bang{L\prn{A+B}}$} (1);
          \draw[->,transform canvas={yshift=-.1cm}] (0) to node[below] {$L\brk{\prn{\Idn{A},\bot} \mid \prn{\bot,\Idn{B}}}$} (1);
          \draw[->>] (1) to node[above] {$\otimes_{A,B}^\dagger$} (2);
        \end{tikzpicture}
      \end{equation}
      \medskip

      The following are coequaliser diagrams in $\Dcpo$:
      \begin{equation}\label[diagram]{diag:smash-coeq:3}
        \begin{tikzpicture}[diagram,baseline=(0.base)]
          \node(0) {$A+B$};
          \node[right = 4.5cm of 0] (1) {$A\times B$};
          \node[right = 2.5cm of 1] (2) {$A\otimes B$};
          \draw[->,transform canvas={yshift=.1cm}] (0) to node[above] {$\bot\circ\Bang{A+B}$} (1);
          \draw[->,transform canvas={yshift=-.1cm}] (0) to node[below] {$\brk{\prn{\Idn{A},\bot} \mid \prn{\bot,\Idn{B}}}$} (1);
          \draw[->>] (1) to node[above] {$\otimes_{A,B}$} (2);
        \end{tikzpicture}
      \end{equation}
      \begin{equation}\label[diagram]{diag:smash-coeq:4}
        \begin{tikzpicture}[diagram,baseline=(0.base)]
          \node(0) {$A+B$};
          \node[right = 4.5cm of 0] (1) {$L\prn{A\times B}$};
          \node[right = 2.75cm of 1] (2) {$A\otimes B$};
          \draw[->,transform canvas={yshift=.1cm}] (0) to node[above] {$\bot\circ\Bang{A+B}$} (1);
          \draw[->,transform canvas={yshift=-.1cm}] (0) to node[below] {$\brk{\prn{\Idn{A},\bot} \mid \prn{\bot,\Idn{B}}}$} (1);
          \draw[->>] (1) to node[above] {$\otimes_{A,B}^\dagger$} (2);
        \end{tikzpicture}
      \end{equation}
    \end{corollary}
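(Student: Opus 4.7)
The plan is to derive all four coequaliser diagrams from \cref{lem:uni-bistrict} by combining three tools: creation of connected colimits by $U\colon\Dcppo\to\Dcpo$ (\cref{cor:U-creates-connected-colimits}) to shuttle coequalisers between $\Dcppo$ and $\Dcpo$; the lax epimorphism $\brk{\bot\mid \eta_{A+B}}\colon\One+\prn{A+B}\to L\prn{A+B}$ from \cref{cor:lax-epi} to reduce equalities of parallel maps out of $L\prn{A+B}$ to their restrictions along $\bot$ and $\eta_{A+B}$; and the Kleisli correspondence between strict maps $L\prn{A\times B}\to C$ and Scott--continuous maps $A\times B\to C$, to mediate between diagrams whose middle object is $A\times B$ and those whose middle object is $L\prn{A\times B}$.

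The transfer of \cref{diag:smash-coeq:dcppo} from $\Dcppo$ to $\Dcpo$ is immediate: the whole diagram lies in $\Dcppo$ and coequalisers are connected, so \cref{cor:U-creates-connected-colimits} applies. For \cref{diag:smash-coeq:2} as a coequaliser in $\Dcppo$, I would verify its universal property directly. A strict $g\colon L\prn{A\times B}\to C$ corresponds under the Kleisli adjunction to $f = g\circ\eta_{A\times B}\colon A\times B\to C$ with $g = f^\dagger$. Applying \cref{cor:lax-epi} to $L\prn{A+B}$, the coequalising condition on $g$ is equivalent to its restrictions along $\bot$ and $\eta_{A+B}$ agreeing: the $\bot$-component is automatic by strictness, and the $\eta$-component reduces (by naturality of $\eta$) to $f\prn{a,\bot}=f\prn{\bot,b}=\bot_C$. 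Together with $f\prn{\bot,\bot}=\bot_C$ (obtained by specialising $a=\bot_A$), this is exactly the bistrictness condition of \cref{def:bistrict}. The unique strict factorisation $\bar f\colon A\otimes B\to C$ from \cref{lem:uni-bistrict} then supplies, by Kleisli extension, the unique strict factorisation $g = \bar f\circ\otimes_{A,B}^\dagger$. Finally, \cref{cor:U-creates-connected-colimits} transports \cref{diag:smash-coeq:2} to a coequaliser in $\Dcpo$.

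For \cref{diag:smash-coeq:3} and \cref{diag:smash-coeq:4} in $\Dcpo$, I would compare their coequalising conditions against those of \cref{diag:smash-coeq:dcppo} and \cref{diag:smash-coeq:2} viewed in $\Dcpo$. In both cases the analysis is identical: by \cref{cor:lax-epi}, agreement of the two parallel composites out of $L\prn{A+B}$ is equivalent to their agreement on $\bot$ (automatic) and on $\eta_{A+B}$, which unfolds to the coequalising condition of the corresponding restricted diagram (reading the bottom arrow of \cref{diag:smash-coeq:4} as $\eta_{A\times B}\circ \brk{\prn{\Idn_A,\bot}\mid\prn{\bot,\Idn_B}}$). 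Since the cocone maps $\otimes_{A,B}$ and $\otimes_{A,B}^\dagger$ are shared, unique factorisations carry over verbatim. The main obstacle I anticipate is the bookkeeping in \cref{diag:smash-coeq:2}, where the Kleisli correspondence must be applied carefully to translate the coequalising equation into bistrictness of $f$ in the precise sense of \cref{def:bistrict}; the remainder is routine reformulation via \cref{cor:U-creates-connected-colimits} and \cref{cor:lax-epi}, each of which preserves universal properties in both directions.
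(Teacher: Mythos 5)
Your proof is correct and follows essentially the same route as the paper: establish \cref{diag:smash-coeq:dcppo} via \cref{lem:uni-bistrict}, transfer between $\Dcppo$ and $\Dcpo$ via \cref{cor:U-creates-connected-colimits}, and identify the coequalising conditions of the remaining diagrams with one another. The only difference is presentational --- where the paper cites \cref{lem:tfae-pre-bistrictness:pointed-codomain} for the equivalence of the four coequalising conditions, you re-derive those equivalences directly from \cref{cor:lax-epi} and the Kleisli correspondence, which is the same underlying argument spelled out in more detail.
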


    \begin{proof}
      We have seen in \cref{cor:U-creates-connected-colimits} that the forgetful functor $U\colon \Dcppo\to\Dcpo$ creates connected colimits; therefore, a coequaliser diagram $\Dcppo$ is equally well a coequaliser diagram in $\Dcpo$. \cref{diag:smash-coeq:dcppo} is therefore a coequaliser in both categories by \cref{lem:uni-bistrict}. That \cref{diag:smash-coeq:2,diag:smash-coeq:3,diag:smash-coeq:4} are all coequalisers follows from \cref{lem:tfae-pre-bistrictness:pointed-codomain}.\qed
    \end{proof}

    \begin{lemma}\label{lem:commutator-is-universal}
      Up to isomorphism, the lifting monad sends any cartesian product $A\times B$ to the smash product $LA\times LB$. In particular, the commutator $\kappa_{A,B}\colon LA\times LB \to L\prn{A\times B}$ is the universal bistrict map in the sense of \cref{lem:uni-bistrict}.
    \end{lemma}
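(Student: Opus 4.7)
The plan is to show that $\kappa_{A,B}\colon LA\times LB\to L\prn{A\times B}$ satisfies the universal property of the universal bistrict map out of $LA\times LB$ (\cref{lem:uni-bistrict}); the induced comparison $LA\otimes LB\to L\prn{A\times B}$ will then be an isomorphism, and this also identifies $\kappa_{A,B}$ with $\otimes_{LA,LB}$ up to that isomorphism.

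First I would verify that $\kappa_{A,B}$ is itself bistrict. Unfolding the definition of the commutator as an iterated internal Kleisli extension of $\eta_{A\times B}\colon A\times B\to L\prn{A\times B}$, the required equalities $\kappa_{A,B}\prn{\bot,v} = \bot = \kappa_{A,B}\prn{u,\bot}$ follow from the strictness of Kleisli extensions of strict maps into lifting algebras — the very fact invoked above to establish that $\LL$ is commutative via Kock's criterion, applied once in each argument.

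For the universal property, fix a bistrict map $f\colon LA\times LB\to C$ into a pointed dcpo $C$. Using the characterisation of $L\prn{A\times B}$ as the Sierpi\'nski cone of $A\times B$ (\cref{lem:lifting-is-scone}), I would define a strict map $\bar f\colon L\prn{A\times B}\to C$ by requiring $\bar f\prn\bot = \bot_C$ and $\bar f\prn{\eta_{A\times B}\prn{a,b}} = f\prn{\eta_A a,\eta_B b}$; equivalently, $\bar f$ is the Kleisli extension into the algebra $C$ of the composite $f\circ\prn{\eta_A\times\eta_B}\colon A\times B\to C$. To verify the factorisation $\bar f\circ\kappa_{A,B} = f$ as maps $LA\times LB\to C$, I would apply \cref{cor:lax-epi} successively in each argument, reducing the check to the four cases $\prn{u,v}\in\brc{\prn{\bot,\bot},\prn{\bot,\eta_B b},\prn{\eta_A a,\bot},\prn{\eta_A a,\eta_B b}}$. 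The three cases involving a $\bot$ give $\bot_C$ on both sides by combining bistrictness of $\kappa_{A,B}$ and of $f$ with strictness of $\bar f$, while the remaining case collapses via the strength unit law $\kappa_{A,B}\prn{\eta_A a,\eta_B b} = \eta_{A\times B}\prn{a,b}$ to the defining equation of $\bar f$. Uniqueness of the factorisation is one further application of \cref{cor:lax-epi} to $L\prn{A\times B}$.

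The main obstacle I expect is discharging the bistrictness of $\kappa_{A,B}$ cleanly, since the statement entangles the strength and the monad multiplication; the cleanest route is to reuse the strictness-of-Kleisli-extension observation already implicit in the proof that $\LL$ is commutative, rather than a direct computation on points. Once that is in place, the rest is the now-familiar two-step lax-epimorphism reasoning.
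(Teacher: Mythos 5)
Your proposal is correct and follows essentially the same route as the paper: the paper likewise defines $\bar f$ as the Kleisli extension of $f\circ\prn{\eta_A\times\eta_B}$, notes that it is automatically strict, and deduces uniqueness from \cref{cor:joint-epi}. Your write-up merely fills in the details the paper leaves implicit (the bistrictness of $\kappa_{A,B}$ and the four-case verification of the factorisation), and those details check out.
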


    \begin{proof}
      It suffices to show that any bistrict map $f\colon LA\times LB \to C$ extends uniquely along $\kappa_{A,B}\colon LA\times LB\to L\prn{A,B}$. We let $\bar{f}\colon L\prn{A\times B}\to C$ be the extension of $f\circ \eta_A\times \eta_B\colon A\times B\to C$, which is automatically strict. Uniqueness of the extension is deduced using \cref{cor:joint-epi}.\qed
    \end{proof}
  \end{xsect}

  \begin{xsect}{Bilinear morphisms and Seal's general theory}
    Although we have developed smash products and their universal property (\cref{sec:smash-product}) with respect to bistrict morphisms in the concrete, Seal~\cite{seal:2013} has provided a general theory for deducing tensorial structure from commutative monads. In this section, we show that \opcit's notion of \emph{bilinear map} coincides with our bistrict maps and, moreover, that the tensor products of \opcit satisfy the same universal property as our smash product.

    \begin{definition}[Bilinear morphism~\cite{seal:2013}]\label{def:bilinear}
      Let $A$, $B$, and $C$ be pointed dcpos. A Scott--continuous morphism $f\colon A\times B \to C$ is called \demph{bilinear} when the following diagram commutes:
      \begin{equation}\label[diagram]{diag:bilinearity}
        \DiagramSquare{
          width = 2.5cm,
          nw = LA\times LB,
          ne = L\prn{A\times B},
          se = C,
          sw = A\times B,
          north = \kappa_{A,B},
          west = \alpha_A\times\alpha_B,
          east = f^\dagger,
          south = f,
        }
      \end{equation}
    \end{definition}

    \begin{lemma}\label{lem:bistrict-iff-bilinear}
      A morphism $f\colon A\times B \to C$ is bistrict if and only if it is bilinear.
    \end{lemma}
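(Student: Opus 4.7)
The plan is to deduce the equivalence directly from the universal property of the commutator $\kappa_{A,B}$ established in \cref{lem:commutator-is-universal}, which characterises it as the universal bistrict map out of $LA\times LB$. This reduces the question to manipulating unit laws for the lifting monad and strictness of structure maps.

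For the forward direction ($\Rightarrow$), I would start from a bistrict $f\colon A\times B\to C$ and form the composite $g := f\circ\prn{\alpha_A\times\alpha_B}\colon LA\times LB\to C$. I first check that $g$ is itself bistrict: structure maps are strict, so $g\prn{\bot,v} = f\prn{\bot,\alpha_B v} = \bot$ and symmetrically $g\prn{u,\bot}=\bot$, using bistrictness of $f$. By \cref{lem:commutator-is-universal}, there is therefore a unique strict $\tilde g\colon L\prn{A\times B}\to C$ with $\tilde g\circ\kappa_{A,B}=g$. It remains to identify $\tilde g$ with $f^\dagger$: precomposing with $\eta_A\times\eta_B$ and using the strength unit law $\kappa_{A,B}\circ\prn{\eta_A\times\eta_B}=\eta_{A\times B}$, together with the algebra unit laws $\alpha_A\circ\eta_A = \Idn{A}$ and $\alpha_B\circ\eta_B = \Idn{B}$, I obtain $\tilde g\circ\eta_{A\times B}=g\circ\prn{\eta_A\times\eta_B}=f$. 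Because strict maps out of the free algebra $L\prn{A\times B}$ are uniquely determined by restriction along $\eta_{A\times B}$, this forces $\tilde g = f^\dagger$, and \cref{diag:bilinearity} commutes.

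For the backward direction ($\Leftarrow$), I would leverage that $\kappa_{A,B}$ is itself bistrict, again by \cref{lem:commutator-is-universal}; in particular $\kappa_{A,B}\prn{\bot,v}=\bot=\kappa_{A,B}\prn{u,\bot}$ for all $u,v$. Instantiating \cref{diag:bilinearity} at $\prn{\bot_{LA},\eta_B b}\in LA\times LB$ and simplifying via $\alpha_A\bot=\bot$ and $\alpha_B\prn{\eta_Bb}=b$, I compute
\[
  f\prn{\bot,b} = f^\dagger\prn{\kappa_{A,B}\prn{\bot,\eta_Bb}} = f^\dagger\prn{\bot} = \bot,
\]
using strictness of $f^\dagger$. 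The symmetric evaluation at $\prn{\eta_A a,\bot_{LB}}$ yields $f\prn{a,\bot}=\bot$, and full strictness $f\prn{\bot,\bot}=\bot$ follows as a special case. Hence $f$ satisfies condition (2) of \cref{def:bistrict} and is bistrict.

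The principal bookkeeping hurdle, rather than any conceptual obstacle, is ensuring that $f^\dagger$ is being used with its correct meaning as the unique strict extension along $\eta_{A\times B}$, which presupposes $f$ itself is strict; in the forward direction this is given by bistrictness, and in the backward direction it is implied by the argument applied to $\prn{\bot,\bot}$. Once this and the standard unit/strength compatibility $\kappa_{A,B}\circ\prn{\eta_A\times\eta_B}=\eta_{A\times B}$ are in hand, both implications reduce to routine applications of the universal properties already established.
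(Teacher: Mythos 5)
Your proof is correct, but it follows a genuinely different route from the paper's. For the substantive direction (bistrict implies bilinear), the paper restricts the square \cref{diag:bilinearity} along the epimorphism $\brk{\bot\mid\eta_A}\times\brk{\bot\mid\eta_B}\colon \prn{\One+A}\times\prn{\One+B}\to LA\times LB$ obtained from \cref{cor:joint-epi} and checks the four resulting cases directly; the converse is dismissed as clear. You instead route the forward direction through \cref{lem:commutator-is-universal}: the composite $f\circ\prn{\alpha_A\times\alpha_B}$ is bistrict, hence factors uniquely through the universal bistrict map $\kappa_{A,B}$ by a strict map, which you identify with $f^\dagger$ by checking agreement on generators and appealing to uniqueness of linear maps out of the free algebra $L\prn{A\times B}$; and you spell out the converse by evaluating \cref{diag:bilinearity} at $\prn{\bot,\eta_B b}$ and $\prn{\eta_A a,\bot}$. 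Both arguments are sound and ultimately rest on the same joint epimorphicity (\cref{lem:commutator-is-universal} is itself proved from \cref{cor:joint-epi}); yours trades the paper's four-case computation for a universal-property argument and has the merit of making the ``clearly'' half explicit. One minor remark: your closing worry is unfounded --- the Kleisli extension $f^\dagger = \alpha_C\circ Lf$ exists and is automatically linear for \emph{any} Scott--continuous $f$ into an algebra, with no strictness hypothesis on $f$ --- but the way you discharge it does no harm.
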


    \begin{proof}
      A bilinear map is clearly bistrict. Conversely, assume that $f\colon A\times B\to C$ is bistrict. By \cref{cor:joint-epi}, both of the embeddings $\brk{\bot\mid\eta_A}\colon \One+A\hooktwoheadrightarrow LA$ and $\brk{\bot\mid\eta_B}\colon \One+B\hooktwoheadrightarrow LB$ are epimorphic, and therefore so is their cartesian product. Therefore, it suffices to
      consider the restriction of \cref{diag:bilinearity} from \cref{def:bilinear} along $\brk{\bot\mid\eta_A}\times\brk{\bot\mid \eta_B}\colon \prn{\One+A}\times\prn{\One+B}\hooktwoheadrightarrow LA\times LB$, or, equivalently, along each of the following four embeddings:
      \begin{align}
        \prn{\bot,\bot}     & \colon \One \hookrightarrow LA\times LB
        \\
        \prn{\eta_A,\bot}   & \colon A\hookrightarrow LA\times LB
        \\
        \prn{\bot,\eta_B}   & \colon B \hookrightarrow LA\times LB
        \\
        \prn{\eta_A,\eta_B} & \colon A\times B \hookrightarrow LA\times LB
      \end{align}

      From this reduction, it is easily seen that bistrictness implies bilinearity.\qed
    \end{proof}

    Now we recall Seal's construction of the tensor product.

    \begin{definition}[{Seal~\cite[\S2.2]{seal:2013}}]\label{def:seal:tensor}
      The \demph{tensor product} $A\boxtimes B$ of two pointed dcpos $A$ and $B$ is given by the following coequalier in $\Dcppo$, which exists by virtue of \cref{cor:lift-algebras-cocomplete,cor:lift-alg-characterisation}:
      \begin{equation*}
        \begin{tikzpicture}[diagram,baseline=(0.base)]
          \node(0) {$L\prn{LA\times LB}$};
          \node[right = 4cm of 0] (1) {$L\prn{A\times B}$};
          \node[right = 2.5cm of 1] (2) {$A\boxtimes B$};
          \draw[->,transform canvas={yshift=.1cm}] (0) to node[above] {$\kappa_{A,B}^\dagger$} (1);
          \draw[->,transform canvas={yshift=-.1cm}] (0) to node[below] {$L\prn{\alpha_A\times \alpha_B}$} (1);
          \draw[->>,exists] (1) to node[above] {$q_{A,B}$} (2);
        \end{tikzpicture}
      \end{equation*}
    \end{definition}

    Seal~\cite{seal:2013} proves a universal property for the tensor product with respect to bilinear morphisms.

    \begin{theorem}[{Seal~\cite{seal:2013}}]\label{thm:seal:tensor-represents}
      The tensor product $A\boxtimes B$ represents bilinear maps in the sense that for any bilinear morphism $f\colon A\times B\to C$ there exists a unique linear morphism $\bar{f}\colon A\boxtimes B\to C$ making the following triangle:
      \begin{equation*}
        \begin{tikzpicture}[diagram, baseline=(current bounding box.center)]
          \node (nw) {$L\prn{A\times B}$};
          \node[right = of nw] (ne) {$C$};
          \node[below = of nw] (sw) {$A\boxtimes B$};
          \draw[->] (nw) to node[left] {$q_{A,B}$} (sw);
          \draw[->] (nw) to node[above] {$f^\dagger$} (ne);
          \draw[->,exists] (sw) to node[sloped,below] {$\exists! \bar{f}$} (ne);
        \end{tikzpicture}
      \end{equation*}

      Moreover, let ${\boxtimes_{A,B}}\colon A\times B\to A\boxtimes B$ be the composite \[A\times B\xrightarrow{\eta_{A\times B}}L\prn{A\times B}\xrightarrow{q_{A,B}}A\boxtimes B\text{.}\] Then for any linear morphism $h\colon A\boxtimes B \to C$, the restriction $h\circ{\boxtimes_{A,B}} \colon A\times B \to C$ is bilinear and induces $h$ in the sense that $\overline{h\circ{\boxtimes_{A,B}}} = h$.
    \end{theorem}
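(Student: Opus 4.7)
The plan is to derive both claims directly from the coequaliser universal property that defines $q_{A,B}\colon L\prn{A\times B}\twoheadrightarrow A\boxtimes B$ in $\Dcppo$ (\cref{def:seal:tensor}); no ad-hoc construction is required.

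For the representability claim, I would fix a bilinear $f\colon A\times B\to C$ and note first that its extension $f^\dagger\colon L\prn{A\times B}\to C$ is automatically strict and therefore a morphism of $\Dcppo$. The crux is then to show that $f^\dagger$ coequalises the parallel pair $\kappa_{A,B}^\dagger, L\prn{\alpha_A\times\alpha_B}\colon L\prn{LA\times LB}\to L\prn{A\times B}$: both composites are strict maps out of $L\prn{LA\times LB}$, so by the unit law for Kleisli extension they agree if and only if their restrictions along $\eta_{LA\times LB}$ agree. Using naturality of $\eta$ together with the unit law $\kappa_{A,B}^\dagger\circ\eta_{LA\times LB} = \kappa_{A,B}$, these restrictions simplify respectively to $f^\dagger\circ\kappa_{A,B}$ and $f\circ\prn{\alpha_A\times\alpha_B}$, whose equality is exactly the bilinearity square \cref{diag:bilinearity}. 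The coequaliser universal property of $A\boxtimes B$ then produces the unique linear $\bar f$ with $\bar f\circ q_{A,B} = f^\dagger$.

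For the converse, I would fix a linear $h\colon A\boxtimes B\to C$ and begin by observing that $\prn{h\circ\boxtimes_{A,B}}^\dagger = h\circ q_{A,B}$: both sides are strict, and their precompositions with $\eta_{A\times B}$ agree using the definition $\boxtimes_{A,B} = q_{A,B}\circ\eta_{A\times B}$ together with the unit law for algebras, so they coincide everywhere by uniqueness of Kleisli extension. Bilinearity of $h\circ\boxtimes_{A,B}$ then reduces to precomposing the coequalising identity $q_{A,B}\circ\kappa_{A,B}^\dagger = q_{A,B}\circ L\prn{\alpha_A\times\alpha_B}$ with $\eta_{LA\times LB}$ and subsequently with $h$. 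Finally, the identity $\overline{h\circ\boxtimes_{A,B}} = h$ follows from the uniqueness half of the coequaliser universal property, since $h$ itself satisfies $h\circ q_{A,B} = \prn{h\circ\boxtimes_{A,B}}^\dagger$ by the preceding observation.

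The main obstacle I anticipate is notational rather than conceptual: translating the coequalising identity, which lives at the level of strict maps out of $L\prn{LA\times LB}$, into the bilinearity identity, which lives at the level of maps out of $A\times B$, requires careful bookkeeping of several unit and naturality laws. Once the reduction via $\eta_{LA\times LB}$ is justified, both halves of the theorem fall out as formal consequences of coequaliser uniqueness.
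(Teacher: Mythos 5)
Your proposal is correct, but it takes a different route from the paper: the paper's entire proof of this theorem is a citation to \citet[Lemma~2.3.3]{seal:2013}, applied via the strength of $\LL$ established in \cref{lem:lifting-strong}, whereas you unfold \cref{def:seal:tensor} and derive both claims directly from the coequaliser universal property. Your argument checks out: $f^\dagger$ is linear, and since $\kappa_{A,B}^\dagger$, $L\prn{\alpha_A\times\alpha_B}$, and $f^\dagger$ are all algebra homomorphisms out of the free algebra $L\prn{LA\times LB}$, the two composites agree iff they agree after precomposition with $\eta_{LA\times LB}$, which reduces to \cref{diag:bilinearity} exactly as you say; the converse direction likewise reduces correctly via $\prn{h\circ\boxtimes_{A,B}}^\dagger = h\circ q_{A,B}$. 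One small point of hygiene: when you justify passing to restrictions along $\eta_{LA\times LB}$ you appeal to ``the unit law for Kleisli extension'' and to the maps being \emph{strict}; the honest justification is the universal property of the free algebra (uniqueness of extensions of algebra maps along the unit), which requires the composites to be algebra homomorphisms rather than merely strict --- this is harmless here only because strict and linear coincide for pointed dcpos (\cref{lem:strict-vs-homomorphism}). What your approach buys is a self-contained proof that reproves the relevant instance of Seal's general lemma in the concrete setting of the lifting monad; what the paper's approach buys is brevity and generality, since Seal's result applies to any commutative monad satisfying the stated hypotheses.
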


    \begin{proof}
      This follows from Seal~\cite[Lemma~2.3.3]{seal:2013} via \cref{lem:lifting-strong}.\qed
    \end{proof}

    In order to show that Seal's tensor product satisfies the same universal property as our smash product, we must deduce a slight reformulation of \cref{thm:seal:tensor-represents}.

    \begin{lemma}[Universal bilinear map]\label{lem:universal-bilinear-map}
      The composite  \[\boxtimes_{A,B} = A\times B\xrightarrow{\eta_{A\times B}}L\prn{A\times B}\xrightarrow{q_{A,B}}A\boxtimes B\]
      is the \demph{universal bilinear map} in the sense that any bilinear map $f\colon A\times B\to C$ factors uniquely through it in $\Dcppo$ as depicted below:
      \begin{equation*}
        \begin{tikzpicture}[diagram,baseline=(current bounding box.center)]
          \node (nw) {$A\times B$};
          \node[right = of nw] (ne) {$C$};
          \node[below = of nw] (sw) {$A\boxtimes B$};
          \draw[->] (nw) to node[left] {$\boxtimes_{A,B}$} (sw);
          \draw[->] (nw) to node[above] {$f$} (ne);
          \draw[->,exists] (sw) to node[sloped,below] {$\exists! \bar{f}$} (ne);
        \end{tikzpicture}
      \end{equation*}
    \end{lemma}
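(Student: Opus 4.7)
The plan is to observe that this lemma is essentially a restatement of Seal's universal property (\cref{thm:seal:tensor-represents}) in which the role of the Kleisli-extended map $f^\dagger \colon L\prn{A\times B}\to C$ is replaced by the original map $f\colon A\times B\to C$, and correspondingly the quotient $q_{A,B}$ is replaced by the composite $\boxtimes_{A,B} = q_{A,B}\circ \eta_{A\times B}$. No new coequaliser-theoretic content is needed; both existence and uniqueness follow mechanically by composing with $\eta_{A\times B}$ and invoking the unit law for lifting algebras together with the bijection of \cref{thm:seal:tensor-represents}.

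Given a bilinear $f\colon A\times B\to C$, I would first define $\bar{f}\colon A\boxtimes B\to C$ to be the unique linear map with $\bar{f}\circ q_{A,B}=f^\dagger$ produced by \cref{thm:seal:tensor-represents}. To verify the triangle $\bar{f}\circ \boxtimes_{A,B}=f$, I would precompose the defining equation with $\eta_{A\times B}\colon A\times B\hookrightarrow L\prn{A\times B}$ and use the unit law for $C$ as a lifting algebra, which yields $f^\dagger\circ\eta_{A\times B}=f$; unfolding $\boxtimes_{A,B}=q_{A,B}\circ\eta_{A\times B}$ gives the desired identity.

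For uniqueness, I would suppose that $g\colon A\boxtimes B\to C$ is any strict (equivalently, linear, by \cref{cor:lift-alg-characterisation}) map with $g\circ \boxtimes_{A,B}=f$, and apply the second half of \cref{thm:seal:tensor-represents} which states $\overline{g\circ\boxtimes_{A,B}}=g$. Since $g\circ\boxtimes_{A,B}=f$, this gives $g=\bar{f}$, as required. (Alternatively, one can deduce $g\circ q_{A,B}=f^\dagger$ from the universal property of $L\prn{A\times B}$ as the free lifting algebra on $A\times B$, applied to the equation $g\circ q_{A,B}\circ\eta_{A\times B}=f$, and then invoke uniqueness in \cref{thm:seal:tensor-represents}; this bypasses the second statement of Seal's theorem.)

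There is no real obstacle here: the only subtlety is to keep track of the identification between strict maps out of a pointed dcpo and linear maps of the corresponding lifting algebra, which is already established in \cref{lem:strict-vs-homomorphism,cor:lift-alg-characterisation}. The result is essentially a cosmetic repackaging of \cref{thm:seal:tensor-represents} that will be convenient in the next section, where it is used to compare $\boxtimes$ with the smash product $\otimes$ of \cref{lem:uni-bistrict}.
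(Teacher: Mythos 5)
Your proposal is correct and follows essentially the same route as the paper: define $\bar{f}$ from the first part of \cref{thm:seal:tensor-represents}, get the triangle by precomposing with $\eta_{A\times B}$ and the unit law, and get uniqueness by reducing $g\circ q_{A,B}=f^\dagger$ to $g\circ\boxtimes_{A,B}=f$ (the paper does this via \cref{cor:joint-epi} plus strictness, which is your parenthetical alternative; your primary route via the second half of Seal's theorem is an equally valid minor variation). The only thing you leave implicit is that $\boxtimes_{A,B}$ is itself bilinear, which the paper records explicitly as an instance of the second part of \cref{thm:seal:tensor-represents}.
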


    \begin{proof}
      Notice that ${\boxtimes_{A,B}}\colon A\times B\to A\otimes B$ is indeed bilinear by the second part of \cref{thm:seal:tensor-represents}. That any bilinear map $f\colon A\times B\to C$ factors uniquely through it follows from the first part of \cref{thm:seal:tensor-represents} via \cref{cor:joint-epi}. Indeed, we first let $\bar{f} \colon A\boxtimes B\to C$ be the map determined by \cref{thm:seal:tensor-represents} as follows:
      \begin{equation*}
        \begin{tikzpicture}[diagram, baseline=(current bounding box.center)]
          \node (nw) {$L\prn{A\times B}$};
          \node[right = of nw] (ne) {$C$};
          \node[below = of nw] (sw) {$A\boxtimes B$};
          \draw[->] (nw) to node[left] {$q_{A,B}$} (sw);
          \draw[->] (nw) to node[above] {$f^\dagger$} (ne);
          \draw[->,exists] (sw) to node[sloped,below] {$\bar{f}$} (ne);
        \end{tikzpicture}
      \end{equation*}

      By \cref{cor:joint-epi}, the diagram above commutes if and only if its restrictions along $\bot\colon \One\to L\prn{A\times B}$ and $\eta_{A\times B}\colon A\times B$ commute. The former is automatic because all maps in sight are strict; the latter is precisely the property of $\bar{f}$ extending $f$ along $\boxtimes_{A,B}$.\qed
    \end{proof}

    \begin{corollary}\label{cor:smash=tensor}
      There exists a unique bilinear / bistrict isomorphism $A\boxtimes B\to A\otimes B$ from Seal's tensor product to our smash product factoring the universal bistrict map through the universal bilinear map, and vice versa:
      \begin{equation*}
        \begin{tikzpicture}[diagram,baseline=(current bounding box.center)]
          \node (nw) {$A\times B$};
          \node[right = of nw] (ne) {$A\otimes B$};
          \node[below = of nw] (sw) {$A\boxtimes B$};
          \draw[->] (nw) to node[left] {$\boxtimes_{A,B}$} (sw);
          \draw[->] (nw) to node[above] {$\otimes_{A,B}$} (ne);
          \draw[->,exists] (sw) to node[sloped,below] {$\overline{\otimes_{A,B}}$} (ne);
        \end{tikzpicture}
        \quad
        \begin{tikzpicture}[diagram,baseline=(current bounding box.center)]
          \node (nw) {$A\times B$};
          \node[right = of nw] (ne) {$A\boxtimes B$};
          \node[below = of nw] (sw) {$A\otimes B$};
          \draw[->] (nw) to node[left] {$\otimes_{A,B}$} (sw);
          \draw[->] (nw) to node[above] {$\boxtimes_{A,B}$} (ne);
          \draw[->,exists] (sw) to node[sloped,below] {$\overline{\boxtimes_{A,B}}$} (ne);
        \end{tikzpicture}
      \end{equation*}
    \end{corollary}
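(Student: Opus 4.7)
The plan is to exploit the fact that, by \cref{lem:bistrict-iff-bilinear}, bistrict morphisms and bilinear morphisms coincide, so both $\otimes_{A,B}\colon A\times B\to A\otimes B$ (from \cref{lem:uni-bistrict}) and $\boxtimes_{A,B}\colon A\times B\to A\boxtimes B$ (from \cref{lem:universal-bilinear-map}) are universal morphisms for the \emph{same} class of arrows out of $A\times B$ in $\Dcppo$. The result is then a routine ``uniqueness of representing objects'' argument.

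Concretely, I would proceed in three steps. First, I would apply \cref{lem:universal-bilinear-map} to the bilinear (= bistrict) map $\otimes_{A,B}\colon A\times B\to A\otimes B$ to obtain the unique strict $\overline{\otimes_{A,B}}\colon A\boxtimes B\to A\otimes B$ with $\overline{\otimes_{A,B}}\circ\boxtimes_{A,B} = \otimes_{A,B}$. Symmetrically, I would apply \cref{lem:uni-bistrict} to the bistrict (= bilinear) map $\boxtimes_{A,B}\colon A\times B\to A\boxtimes B$ to obtain the unique strict $\overline{\boxtimes_{A,B}}\colon A\otimes B\to A\boxtimes B$ with $\overline{\boxtimes_{A,B}}\circ\otimes_{A,B} = \boxtimes_{A,B}$. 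This establishes the existence and uniqueness of the two triangles depicted in the statement.

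Second, I would check that the composites $\overline{\boxtimes_{A,B}}\circ\overline{\otimes_{A,B}}$ and $\overline{\otimes_{A,B}}\circ\overline{\boxtimes_{A,B}}$ are identities. For the first, observe that both $\overline{\boxtimes_{A,B}}\circ\overline{\otimes_{A,B}}$ and $\Idn{A\boxtimes B}$ are strict maps $A\boxtimes B\to A\boxtimes B$ that extend $\boxtimes_{A,B}$ along itself, so by the uniqueness clause of \cref{lem:universal-bilinear-map} they must agree. The other composite is handled identically using the uniqueness clause of \cref{lem:uni-bistrict}. Hence $\overline{\otimes_{A,B}}$ is an isomorphism of pointed dcpos, with inverse $\overline{\boxtimes_{A,B}}$.

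The argument contains no real obstacle since all of the heavy lifting has already been done in \cref{lem:bistrict-iff-bilinear}, \cref{lem:uni-bistrict}, and \cref{lem:universal-bilinear-map}; the only mild subtlety is remembering that the factorising maps are automatically strict (hence linear) because both universal properties are stated in $\Dcppo$, so that the resulting isomorphism is simultaneously bilinear and bistrict as claimed.
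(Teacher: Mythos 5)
Your proposal is correct and is precisely the standard uniqueness-of-universal-objects argument that the paper's one-line proof (``immediate consequence of \cref{lem:bistrict-iff-bilinear}'') leaves implicit: since bistrict and bilinear maps coincide, the two universal arrows represent the same functor and hence are uniquely comparable and mutually inverse. You have simply spelled out the details the paper compresses, including the correct observation that the comparison maps are automatically strict because both universal properties live in $\Dcppo$.
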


    \begin{proof}
      This is an immediate consequence of the fact that bilinear and bistrict maps coincide (\cref{lem:bistrict-iff-bilinear}).\qed
    \end{proof}

  \end{xsect}

  \begin{xsect}{Symmetric monoidal structure of the smash product}
    The smash product of pointed dcpos from \cref{sec:smash-product} extends to a full symmetric monoidal structure on $\LiftAlg = \Dcppo = \Ipo$ with identity $I = L\One$; this result can be taken off the shelf from Seal~\cite[Theorem~2.5.5]{seal:2013}, in combination with our own result $A\otimes B = A\boxtimes B$ from \cref{cor:smash=tensor}.
  \end{xsect}

  \begin{xsect}{Symmetric monoidal structure of the lifting adjunction}
    Seal~\cite{seal:2013} shows that under assumptions that we have established in this paper for the lifting monad $\LL$ and its category of algebras $\LiftAlg=\Dcppo=\Ipo$, the Eilenberg--Moore adjunction $L\dashv U\colon \Dcppo\to\Dcpo$ is \emph{monoidal}: the left adjoint is \emph{strong monoidal} (\cf our own \cref{lem:commutator-is-universal}) and the right adjoint is \emph{lax monoidal}.

    In this section, we extend the result of \opcit in our specific case to show that $L\dashv U\colon\Dcppo\to\Dcpo$ is \emph{symmetric} monoidal.
    We first recall the braiding $\beta^\otimes_{A,B}\colon A\otimes B \to B\otimes A$ of the smash product in $\Dcppo$ in terms of the braiding of the Cartesian product on $\Dcpo$:
    \begin{equation}\label[diagram]{diag:smash-braiding}
      \begin{tikzpicture}[diagram, baseline=(current bounding box.center)]
        \node (nw) {$A\times B$};
        \node[right = of nw] (ne) {$B\times A$};
        \node[right = 2.25cm of ne] (nee) {$B\otimes A$};
        \node[below = of nw] (sw) {$A\otimes B$};
        \draw[->] (nw) to node[left] {$\otimes_{A,B}$} (sw);
        \draw[->] (nw) to node[above] {$\beta^\times_{A,B}$} (ne);
        \draw[->] (ne) to node[above] {$\otimes_{B,A}$} (nee);
        \draw[->,exists] (sw) to node[sloped,below] {$\exists! \beta^\otimes_{A,B}$} (nee);
      \end{tikzpicture}
    \end{equation}

    \begin{lemma}\label{lem:L-symmetric}
      The functor $L\colon \Dcpo\to \Dcppo$ is symmetric monoidal in the sense that the following diagram commutes in $\Dcppo$ for dcpos $A,B,C$:
      \begin{equation}\label[diagram]{diag:L-symmetric}
        \DiagramSquare{
          nw = LA\otimes LB,
          sw = L\prn{A\times B},
          ne = LB\otimes LA,
          se = L\prn{B\times A},
          south = L\prn{\beta^\times_{A,B}},
          north = \beta^\otimes_{A,B},
          west = \bar\kappa_{A,B},
          east = \bar\kappa_{B,A},
          width = 3cm,
        }
      \end{equation}
    \end{lemma}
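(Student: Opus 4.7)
The plan is to verify the square by exploiting the universal property of the smash product together with the definitions of the braiding and of $\bar\kappa_{A,B}$. Both composites in \cref{diag:L-symmetric} are strict maps out of $LA\otimes LB$, so by the universal property of the smash product (\cref{lem:uni-bistrict}) it suffices to precompose with the universal bistrict map $\otimes_{LA,LB}\colon LA\times LB \to LA\otimes LB$ and check equality as maps $LA\times LB \to L(B\times A)$.

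First I would unfold both sides. On the left, the factorisation property of $\bar\kappa_{A,B}$ (\cref{lem:commutator-is-universal}) gives $\bar\kappa_{A,B}\circ\otimes_{LA,LB} = \kappa_{A,B}$, so the left composite becomes $L(\beta^\times_{A,B})\circ\kappa_{A,B}$. On the right, the defining property of the smash braiding (\cref{diag:smash-braiding}), instantiated at $LA$ and $LB$, gives $\beta^\otimes_{A,B}\circ\otimes_{LA,LB} = \otimes_{LB,LA}\circ\beta^\times_{LA,LB}$, and then another application of \cref{lem:commutator-is-universal} yields $\bar\kappa_{B,A}\circ\otimes_{LB,LA} = \kappa_{B,A}$. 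Thus the right composite becomes $\kappa_{B,A}\circ \beta^\times_{LA,LB}$. It therefore remains to establish the ``symmetry of the commutator''
\begin{equation*}
  L(\beta^\times_{A,B})\circ\kappa_{A,B} = \kappa_{B,A}\circ\beta^\times_{LA,LB}\colon LA\times LB\to L(B\times A).
\end{equation*}

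I expect this to be the main obstacle, though not a deep one. Both sides are strict in each argument (being bistrict, cf.\ \cref{lem:commutator-is-universal}), and so by \cref{cor:joint-epi} applied twice---to $\brk{\bot\mid\eta_A}\colon \One+A\hooktwoheadrightarrow LA$ and to $\brk{\bot\mid\eta_B}\colon \One+B\hooktwoheadrightarrow LB$, whose cartesian product is epimorphic---it suffices to verify the equality on each of the four cases $(\bot,\bot)$, $(\eta_A a,\bot)$, $(\bot,\eta_B b)$, $(\eta_A a,\eta_B b)$. The three bistrict cases yield $\bot$ on both sides; in the final case, both composites produce $\eta_{B\times A}(b,a)$, by the computation of $\kappa$ on pure elements and functoriality of $L$ on $\beta^\times$. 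Alternatively, one may identify this step as the standard fact that the commutator of a commutative strong monad on a symmetric monoidal category is natural with respect to the braiding, which follows directly from Kock's commutativity axiom and the coherence of the strength with the symmetry.

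Assembling these observations, the two strict maps $LA\otimes LB \to L(B\times A)$ agree after restriction along $\otimes_{LA,LB}$, and hence agree by the universal property of the smash product, establishing \cref{diag:L-symmetric}.
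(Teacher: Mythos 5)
Your proposal is correct and follows essentially the same route as the paper: restrict along the universal bistrict map $\otimes_{LA,LB}$, reduce the square to the commutator-symmetry equation $L\prn{\beta^\times_{A,B}}\circ\kappa_{A,B} = \kappa_{B,A}\circ\beta^\times_{LA,LB}$, and verify that equation on generators via \cref{cor:joint-epi} and strictness. The only cosmetic difference is that the paper checks three cases rather than your four, since the $\prn{\bot,\bot}$ case is already covered by strictness.
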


    \begin{proof}
      To check that \cref{diag:L-symmetric} commutes, it suffices to consider its restriction along the universal bistrict map $\otimes_{LA,LB}\colon LA\times LB\to LA\otimes LB$. Therefore, to check that the lower inner square commutes in \cref{diag:L-symmetric:2} below, it suffices to check that the outer square commutes in the sense that $L\prn{\beta^\times_{A,B}}\circ \kappa_{A,B} = \kappa_{B,A}\circ \beta^\times_{LA,LB}$:
      \begin{equation}\label[diagram]{diag:L-symmetric:2}
        \begin{tikzpicture}[diagram,baseline=(current bounding box.center)]
          \SpliceDiagramSquare<sq/>{
            nw = LA\otimes LB,
            sw = L\prn{A\times B},
            ne = LB\otimes LA,
            se = L\prn{B\times A},
            south = L\prn{\beta^\times_{A,B}},
            north = \beta^\otimes_{A,B},
            west = \bar\kappa_{A,B},
            east = \bar\kappa_{B,A},
            north/node/style = upright desc,
            west/node/style = upright desc,
            east/node/style = upright desc,
            width = 3.25cm,
          }
          \node[between = sq/sw and sq/ne] {$?$};
          \node[above = of sq/nw] (nw) {$LA\times LB$};
          \node[above = of sq/ne] (ne) {$LB\times LA$};
          \draw[->] (nw) to node[upright desc] {$\otimes_{LA,LB}$} (sq/nw);
          \draw[->] (nw) to node[above] {$\beta^\times_{LA,LB}$} (ne);
          \draw[->] (ne) to node[upright desc] {$\otimes_{LB,LA}$} (sq/ne);
          \draw[->,bend left=50] (ne) to node[right] {$\kappa_{B,A}$} (sq/se);
          \draw[->,bend right=50] (nw) to node[left] {$\kappa_{A,B}$} (sq/sw);
        \end{tikzpicture}
      \end{equation}

      By \cref{cor:joint-epi} and the fact that all maps in sight are strict, it suffices to consider just three cases:
      \begin{align*}
        L\prn{\beta^\times_{A,B}}\prn{\kappa_{A,B}\prn{\eta_A x,\bot}}
         & =
        L\prn{\beta^\times_{A,B}}\bot
        \\
         & = \bot
        \\
         & =
        \kappa_{B,A}\prn{\bot,\eta_A x}
        \\
         & = \kappa_{B,A}\prn{\beta^\times_{LA,LB}\prn{\eta_A x,\bot}}
        \\
        L\prn{\beta^\times_{A,B}}\prn{\kappa_{A,B}\prn{\bot,\eta_B y}}
         & =
        L\prn{\beta^\times_{A,B}}\bot
        \\
         & = \bot
        \\
         & =
        \kappa_{B,A}\prn{\eta_B y,\bot}
        \\
         & =
        \kappa_{B,A}\prn{\beta^\times_{LA,LB}\prn{\bot,\eta_B y}}
        \\
        L\prn{\beta^\times_{A,B}}\prn{\kappa_{A,B}\prn{\eta_Ax,\eta_by}}
         & =
        L\prn{\beta^\times_{A,B}}\prn{\eta_{A\times B}\prn{x,y}}
        \\
         & =
        \eta_{B\times A}\prn{\beta^\times_{A,B}\prn{x,y}}
        \\
         & =
        \eta_{B\times A}\prn{y,x}
        \\
         & =
        \kappa_{B,A}\prn{\eta_By,\eta_Ax}
        \\
         & = \kappa_{B,A}\prn{\beta^\times_{LA,LB}\prn{\eta_Ax,\eta_By}}
      \end{align*}

      We are done.\qed
    \end{proof}

    \begin{lemma}\label{lem:U-symmetric}
      The forgetful functor $U\colon \Dcppo\to \Dcpo$ is symmetric monoidal in the sense that the following diagram commutes in $\Dcpo$ for pointed dcpos $A,B,C$:
      \begin{equation}\label[diagram]{diag:U-symmetric}
        \DiagramSquare{
          nw = A\times B,
          ne = B\times A,
          north = \beta^\times_{A,B},
          sw = A\otimes B,
          se = B\otimes A,
          west = \otimes_{A,B},
          east = \otimes_{B,A},
          south = \beta^\otimes_{A,B},
          width = 2.5cm,
        }
      \end{equation}

      \begin{proof}
        That \cref{diag:U-symmetric} commutes is in fact the \emph{defining} property of the braiding $\beta^\otimes_{A,B}$ as constructed in \cref{diag:smash-braiding}.\qed
      \end{proof}
    \end{lemma}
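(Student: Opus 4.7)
The plan is to observe that this statement is essentially a definitional unfolding, and no real work is needed beyond identifying the defining equation of the braiding. Recall from \cref{diag:smash-braiding} that the braiding $\beta^\otimes_{A,B}\colon A\otimes B\to B\otimes A$ was \emph{constructed} via the universal property of the smash product (\cref{lem:uni-bistrict}) as the unique strict map filling the following triangle:
\begin{equation}
  \begin{tikzpicture}[diagram, baseline=(current bounding box.center)]
    \node (nw) {$A\times B$};
    \node[right = of nw] (ne) {$B\times A$};
    \node[right = 2.25cm of ne] (nee) {$B\otimes A$};
    \node[below = of nw] (sw) {$A\otimes B$};
    \draw[->] (nw) to node[left] {$\otimes_{A,B}$} (sw);
    \draw[->] (nw) to node[above] {$\beta^\times_{A,B}$} (ne);
    \draw[->] (ne) to node[above] {$\otimes_{B,A}$} (nee);
    \draw[->,exists] (sw) to node[sloped,below] {$\exists! \beta^\otimes_{A,B}$} (nee);
  \end{tikzpicture}
\end{equation}

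Thus by construction we have the identity $\beta^\otimes_{A,B}\circ \otimes_{A,B} = \otimes_{B,A}\circ \beta^\times_{A,B}$ in $\Dcppo$, whose image under the forgetful functor $U\colon \Dcppo\to \Dcpo$ is precisely the square \cref{diag:U-symmetric} that we are asked to establish. The only thing one might worry about is whether the composite $\otimes_{B,A}\circ \beta^\times_{A,B}$ appearing above really is a valid candidate for the universal property to produce $\beta^\otimes_{A,B}$ --- that is, whether this composite is bistrict. But $\beta^\times_{A,B}\colon A\times B\to B\times A$ sends $(\bot,b)\mapsto (b,\bot)$ and $(a,\bot)\mapsto (\bot,a)$, and $\otimes_{B,A}$ is itself bistrict and hence annihilates both; so the composite is bistrict, which is what validates the existence of $\beta^\otimes_{A,B}$ and hence also the defining equation. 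In summary, there is no obstacle here at all: the statement is literally the commutativity witnessed by the universal property employed to define $\beta^\otimes_{A,B}$ in the first place.
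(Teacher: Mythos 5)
Your proof is correct and takes essentially the same route as the paper, which likewise observes that the square \cref{diag:U-symmetric} is precisely the defining equation of $\beta^\otimes_{A,B}$ from \cref{diag:smash-braiding}. Your additional check that $\otimes_{B,A}\circ\beta^\times_{A,B}$ is bistrict (so that the universal property of \cref{lem:uni-bistrict} applies) is a worthwhile explicit verification that the paper leaves implicit, but it does not change the substance of the argument.
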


    \begin{corollary}\label{cor:lifting-adjunction-symmetric-monoidal}
      The adjunction $L\dashv U\colon \Dcppo\to \Dcpo$ is symmetric monoidal in the sense that $L\colon \Dcpo\to\Dcppo$ is \emph{strong} symmetric monoidal and $U\colon\Dcppo\to\Dcpo$ is \emph{lax} symmetric monoidal.
    \end{corollary}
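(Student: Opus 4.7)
The plan is to combine Seal's monoidal adjunction result with the two symmetry lemmas just established. Seal~\cite{seal:2013} already provides, under the hypotheses we verified in the preceding sections (commutativity of $\LL$, cocompleteness of $\LiftAlg$, and so forth), that $L\dashv U\colon \Dcppo\to\Dcpo$ extends to a monoidal adjunction with $L$ strong monoidal --- coherent isomorphism $L(A\times B)\cong LA\otimes LB$ implemented by $\bar{\kappa}_{A,B}$ and $L\One\cong I$ by construction --- and $U$ lax monoidal by mate correspondence. What remains is to upgrade both ``monoidal'' to ``symmetric monoidal'', i.e.\ to check compatibility of the symmetry with the monoidal structure maps.

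First I would invoke \cref{lem:L-symmetric} directly: the diagram
\[
  \begin{tikzpicture}[diagram,baseline=(current bounding box.center)]
    \DiagramSquare{
      nw = LA\otimes LB,
      sw = L(A\times B),
      ne = LB\otimes LA,
      se = L(B\times A),
      south = L(\beta^\times_{A,B}),
      north = \beta^\otimes_{LA,LB},
      west = \bar\kappa_{A,B},
      east = \bar\kappa_{B,A},
      width = 3cm,
    }
  \end{tikzpicture}
\]
is precisely the axiom a strong symmetric monoidal functor must satisfy with respect to its structure isomorphism, and the unit axiom is trivial because $L\One\cong I$ is terminal-under-pointedness. Together with the already-established associativity and unitality coherences furnished by Seal, this establishes that $L\colon\Dcpo\to\Dcppo$ is strong symmetric monoidal.

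Second, I would derive the symmetric lax structure of $U$ as the mate of $L$'s strong symmetric structure. Explicitly, the lax monoidal constraint $\phi_{X,Y}\colon UX\times UY\to U(X\otimes Y)$ is the composite $UX\times UY\xrightarrow{\otimes_{UX,UY}}UX\otimes UY$ followed by the counit; the symmetry axiom for $U$ then reduces, after unfolding the mate correspondence, to \cref{lem:U-symmetric}, i.e.\ the commutativity of
\[
  \begin{tikzpicture}[diagram,baseline=(current bounding box.center)]
    \DiagramSquare{
      nw = A\times B,
      ne = B\times A,
      north = \beta^\times_{A,B},
      sw = A\otimes B,
      se = B\otimes A,
      west = \otimes_{A,B},
      east = \otimes_{B,A},
      south = \beta^\otimes_{A,B},
      width = 2.5cm,
    }
  \end{tikzpicture}
\]
which was noted to be the very definition of $\beta^\otimes_{A,B}$. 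Hence $U$ is lax symmetric monoidal. Finally, that the unit and counit of $L\dashv U$ are monoidal natural transformations is part of Seal's original theorem and needs no further symmetry check, since monoidal naturality plus symmetry of $L$ and $U$ individually is the definition of a symmetric monoidal adjunction.

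The main potential obstacle is not conceptual but bookkeeping: one must be careful that the braiding $\beta^\otimes_{A,B}$ on $\Dcppo$ arising from Seal's abstract construction agrees with the one pinned down in \cref{diag:smash-braiding} via the universal bistrict map, so that \cref{lem:L-symmetric,lem:U-symmetric} are speaking about the same braiding as Seal. This coherence follows from the identification $A\otimes B = A\boxtimes B$ of \cref{cor:smash=tensor} together with the universal property of the smash product, so no further computation is required beyond what has already been carried out.
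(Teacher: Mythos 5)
Your proposal follows essentially the same route as the paper, which simply cites \cref{lem:L-symmetric,lem:U-symmetric} together with Seal's Remark~2.7.3 to conclude. Your additional bookkeeping --- checking that Seal's abstract braiding agrees with the one defined via the universal bistrict map, using \cref{cor:smash=tensor} --- is a sensible elaboration of a detail the paper leaves implicit, but the argument is the same.
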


    \begin{proof}
      By \cref{lem:L-symmetric,lem:U-symmetric} via Seal~\cite[Remark~2.7.3]{seal:2013}.\qed
    \end{proof}
  \end{xsect}

  \begin{xsect}{Closed structure of the lifting adjunction}
    Kock~\cite{kock:1971} has provided a method to lift the closed structure of $\Dcpo$ to $\LiftAlg$ by means of an equaliser of dcpos. Of course, the forgetful functor $U\colon\Dcppo\to\Dcpo$ is monadic (\cref{cor:lift-alg-characterisation}) and so creates limits; therefore we can slightly reformulate the construction of \opcit by computing an equaliser of pointed dcpos directly.

    \begin{definition}
      Let $A$ and $B$ be pointed dcpos. We define the \demph{linear function space} $A\multimap B$ to be the following equaliser in $\Dcppo$, where $\sigma_{A,B}\colon B^A\to B^{LA}$ is the internal extension map induced by the strength of $L$ and the algebra structure of $B$:
      \begin{equation*}
        \begin{tikzpicture}[diagram,baseline=(0.base)]
          \node (0) {$A\multimap B$};
          \node[right = of 0] (1) {$B^A$};
          \node[right = of 1] (2) {$B^{LA}$};
          \draw[exists,>->] (0) to (1);
          \draw[->, transform canvas={yshift=.15cm}] (1) to node[above] {$B^{\alpha_A}$} (2);
          \draw[->,transform canvas={yshift=-.15cm}] (1) to node[below] {$\sigma_{A,B}$} (2);
          \useasboundingbox ($(0)+(-1,-1)$) rectangle ($(2)+(1,1)$);
        \end{tikzpicture}
      \end{equation*}
    \end{definition}

    The results of Kock~\cite{kock:1971} then imply that the adjunction $L\dashv U\colon\Dcppo\to \Dcpo$ is closed with respect to the linear function space.

    \begin{definition}
      Let $A$ and $B$ be pointed dcpos. We define the \demph{strict function space} $A\Rightarrow_\bot B$ to be the following equaliser in $\Dcppo$:
      \begin{equation}\label[diagram]{diag:strict-function-space}
        \begin{tikzpicture}[diagram,baseline=(0.base)]
          \node (0) {$A\Rightarrow_\bot B$};
          \node[right = of 0] (1) {$B^A$};
          \node[right = of 1] (2) {$B$};
          \draw[exists,>->] (0) to (1);
          \draw[->, transform canvas={yshift=.15cm}] (1) to node[above] {$B^{\bot}$} (2);
          \draw[->,transform canvas={yshift=-.15cm}] (1) to node[below] {$\bot\circ !_{B^A}$} (2);
          \useasboundingbox ($(0)+(-1,-1)$) rectangle ($(2)+(1,1)$);
        \end{tikzpicture}
      \end{equation}
    \end{definition}

    \begin{lemma}\label{lem:strict-vs-linear-closed-structure}
      The strict and linear function spaces coincide.
    \end{lemma}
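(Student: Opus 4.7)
The plan is to identify both equalisers with the subposet of strict maps inside $B^A$. Since $U\colon\Dcppo\to\Dcpo$ creates limits (by monadicity, \cref{cor:monadicity}), both equalisers may be computed in $\Dcpo$ and the resulting pointed dcpo is their (unique) underlying subdcpo of $B^A$. Thus it suffices to show that for any $f\colon A\to B$ of dcpos, equalising the pair defining $A\multimap B$ is equivalent to equalising the pair defining $A\Rightarrow_\bot B$, and that each is equivalent to strictness of $f$.

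First I would handle the easier direction. Unfolding \cref{diag:strict-function-space}, an element $f\colon A\to B$ lies in the equaliser $A\Rightarrow_\bot B$ if and only if $f\circ\bot = \bot\colon \One\to B$, \ie $f$ is strict in the sense of sending the bottom of $A$ to the bottom of $B$. This is immediate from the definition of the strict function space, since $B^{\bot}$ acts by precomposition with $\bot\colon \One\to A$ and the right hand map sends every function to the constant bottom.

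Next I would unfold the equaliser defining $A\multimap B$. By the defining property of the strength and \cref{lem:lifting-strong}, the map $\sigma_{A,B}\colon B^A\to B^{LA}$ transports a morphism $f\colon A\to B$ to its internal Kleisli extension, which (using the algebra structure on $B$) is $\alpha_B\circ Lf$. On the other hand, $B^{\alpha_A}$ transports $f$ to $f\circ\alpha_A$. Therefore $f$ equalises this parallel pair if and only if $f\circ\alpha_A = \alpha_B\circ Lf\colon LA\to B$, which is exactly the condition that $f$ is a homomorphism of lifting algebras. By \cref{lem:strict-vs-homomorphism}, this is equivalent to strictness of $f$.

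Having shown that both equaliser conditions are equivalent to strictness, the two pointed subdcpos $A\multimap B$ and $A\Rightarrow_\bot B$ of $B^A$ coincide, and the induced comparison morphism is an isomorphism in $\Dcppo$. No step here is really an obstacle; the only subtlety is the correct identification of $\sigma_{A,B}$ with the internal Kleisli extension, which is standard for a strong monad on a cartesian closed category and appeals only to the closed structure of $\Dcpo$ together with \cref{lem:lifting-strong}.
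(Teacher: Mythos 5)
Your proof is correct and follows essentially the same route as the paper's: both arguments identify each equaliser with the subobject of strict maps in $B^A$, the strict function space by inspection and the linear one by recognising the condition $f\circ\alpha_A=\alpha_B\circ Lf$ as the algebra-homomorphism (equivalently, strictness) condition. The only cosmetic difference is that you outsource that last equivalence to \cref{lem:strict-vs-homomorphism}, whereas the paper re-derives it on the spot from \cref{cor:joint-epi} and the unit laws --- which is precisely how \cref{lem:strict-vs-homomorphism} was proved in the first place.
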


    \begin{proof}
      We will show that for any strict map $f\colon C\to B^A$, we have $B^\bot\circ f = \bot\circ {!}_{B^A}\circ f$ if and only if $B^{\alpha_A}\circ f = \sigma_{A,B}\circ f$.
      Fixing $x:C$, we must check that $fx \bot = \bot$ if and only if $f\,x\,\circ \alpha_A  = \sigma_{A,B}\circ {fx}$. These are equivalent by \cref{cor:joint-epi} and the unit laws for algebras.\qed
    \end{proof}

    By virtue of \cref{lem:strict-vs-linear-closed-structure}, we will freely write $A\multimap B$ for both the linear and strict function spaces.

    \begin{lemma}\label{lem:smash-lolli}
      For any pointed dcpo $A$, we have an adjunction $-\otimes A \dashv A\multimap -$ on $\Dcppo$.
    \end{lemma}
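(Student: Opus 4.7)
The plan is to exhibit a natural bijection $\Dcppo\prn{B\otimes A, C} \cong \Dcppo\prn{B, A\multimap C}$ for pointed dcpos $B$ and $C$, by composing the universal property of the smash product with the currying isomorphism of $\Dcpo$. First I would apply \cref{lem:uni-bistrict} to identify strict maps $B\otimes A \to C$ with bistrict maps $B\times A\to C$ via precomposition with the universal bistrict map $\otimes_{B,A}\colon B\times A \to B\otimes A$.

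Next, since $\Dcpo$ is cartesian closed, any Scott--continuous $f\colon B\times A\to C$ corresponds to a Scott--continuous curry $\hat f\colon B\to C^A$. The key claim is that under this correspondence, bistrict maps $f$ correspond bijectively to strict maps $\hat f\colon B\to A\multimap C$. Unpacking \cref{def:bistrict}, bistrictness of $f$ amounts to two equations: (i) $f\prn{b,\bot}=\bot$ for all $b:B$, and (ii) $f\prn{\bot,a}=\bot$ for all $a:A$. Condition (i) says precisely that for every $b$, the map $\hat f\prn{b}\colon A\to C$ is strict, \ie lies in the equaliser $A\multimap C\hookrightarrow C^A$ of \cref{diag:strict-function-space}; so $\hat f$ factors through $A\multimap C$ as a Scott--continuous map $B\to A\multimap C$. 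Condition (ii) says that $\hat f\prn{\bot}$ is the constant $\bot$ function, which is the bottom element of $C^A$ and hence also of the subdcpo $A\multimap C$; so $\hat f\colon B\to A\multimap C$ is strict. Conversely, given any strict $g\colon B\to A\multimap C$, its uncurrying $\check g\colon B\times A\to C$ is manifestly bistrict by reversing the same two observations.

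Finally, naturality of the composite bijection in $B$ and $C$ is immediate from the naturality of currying, the functoriality of the smash product on the codomain, and the naturality of the equaliser inclusion $A\multimap C\hookrightarrow C^A$ in $C$. I do not anticipate any real obstacle here beyond this bookkeeping; the only conceptual content is the translation of bistrictness into the conjunction of ``factors through $A\multimap C$'' and ``is strict as a map into $A\multimap C$'', which is a direct computation from the definitions and from the description of the bottom of the equaliser $A\multimap C$ as inherited from $C^A$.
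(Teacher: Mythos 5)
Your proposal is correct and follows essentially the same route as the paper: identify strict maps out of $B\otimes A$ with bistrict maps out of $B\times A$ via \cref{lem:uni-bistrict}, then observe that under currying the two halves of bistrictness translate exactly into ``lands in the equaliser $A\multimap C$'' and ``is strict as a map into $A\multimap C$''. The paper's proof states this more tersely, but your unpacking of the two conditions is precisely the intended argument.
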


    \begin{proof}
      Fix $\bar{f}\colon C\otimes A\to B$ for some bistrict $f\colon C\times A\to B$. By definition, the mate $f^\sharp\colon C\to B^A$ in $-\times A\dashv \prn{-}^A$ is strict and moreover satisfies the defining property of \cref{diag:strict-function-space}, so we may factor $f^\sharp\colon C\to B^A$ through $A\multimap B\rightarrowtail B^A$ by some unique strict map $\bar{f}^\sharp\colon C\to A\multimap B$. It can be seen that this assignment is naturally bijective.\qed
    \end{proof}

  \end{xsect}

\end{xsect}

\section*{Acknowledgments}

I am thankful to Mart\'in Escard\'o, Marcelo Fiore, Jean Goubault-Larrecq, Daniel Gratzer, Ohad Kammar, and Tom de Jong for helpful discussions and consultations. I am grateful to Ralf Hinze and Dan Marsden for sharing their method for typesetting string diagrams. I especially owe the anonymous referee a debt of gratitude for their careful reading of the paper and many constructive criticisms and suggestions.

This work was funded in part by the European Union under the Marie Sk\l{}odowska-Curie Actions Postdoctoral Fellowship project \emph{TypeSynth: synthetic methods in program verification}, and in part by the United States Air Force Office of Scientific Research under grant FA9550-23-1-0728 (\emph{New Spaces for Denotational Semantics}; Dr.\ Tristan Nguyen, Program Manager). Views and opinions expressed are however those of the authors only and do not necessarily reflect those of the European Union, the European Commission, nor AFOSR. Neither the European Union nor the granting authority can be held responsible for them.

\bibliographystyle{splncs04}
\bibliography{refs-bibtex}

\end{document}